\documentclass[11pt,twoside,reqno]{amsart}
\usepackage{amsmath}
\usepackage{fancyhdr}
\usepackage{amsthm}
\usepackage{amsfonts}
\usepackage{amssymb}
\usepackage{amscd}
\usepackage{graphicx}
\usepackage{afterpage}
\usepackage{enumerate}
\usepackage{bm}
\usepackage{cite}
\usepackage{cases}
\usepackage{caption}
\usepackage[colorlinks=true, urlcolor=blue,  linkcolor=blue, citecolor=blue]{hyperref}
\usepackage{babel}
\usepackage{prettyref}
\usepackage{pgfplots} 
\pgfplotsset{compat=1.17}
\usepackage{booktabs}
\usepackage{algorithm}
\usepackage{algpseudocode}
\usepackage{float}

\makeatletter
\newtheorem{theorem}{Theorem}[section]

\newtheorem{definition}[theorem]{Definition}

\newtheorem{lemma}[theorem]{Lemma}

\newtheorem{proposition}[theorem]{Proposition}


\title[Extensions of BVY Inequalities]{Brezis-Van Schaftingen-Yung Inequalities Beyond the Classical Setting}

\author[S. Hashemi Sababe]{Saeed Hashemi Sababe$^*$}
\address[S. Hashemi Sababe]{R\&D Section, Data Premier Analytics, Canada}
\email{Hashemi\_1365@yahoo.com}
\thanks{Corresponding author}

\subjclass[2020]{Primary 46E35; Secondary 46E30, 26D10, 35R11}
\keywords{Brezis--Van Schaftingen--Yung formula, finite differences, non-doubling measures, variable exponent spaces, Orlicz spaces, nonlocal operators, anisotropic inequalities, stability, interpolation, fractional Sobolev inequalities}

\begin{document}
\sloppy

\maketitle

\begin{abstract}
In this paper, we extend the framework of Brezis--Van Schaftingen--Yung type inequalities in metric measure spaces by exploring several novel directions. First, we establish finite difference characterizations and fractional Sobolev-type inequalities in settings where the underlying measure is non-doubling or only satisfies a weak doubling condition. Second, we incorporate variable exponent and Orlicz space frameworks to capture nonstandard growth phenomena. Third, we derive anisotropic and directional versions of these inequalities to better address non-isotropic structures, and we apply our results to study regularity properties of nonlocal operators. Finally, we investigate the stability and sharpness of the associated constants as well as interpolation and limiting behaviors that bridge classical and fractional settings. These developments not only generalize existing results but also open new avenues for applications in partial differential equations and numerical analysis.

\end{abstract}

\section{Introduction}

The theory of Sobolev spaces and related inequalities has been a central topic in analysis and partial differential equations (PDEs) for many decades. Classical Sobolev spaces, such as \(W^{1,p}(\mathbb{R}^n)\), provide a natural framework for studying function regularity and the existence and uniqueness of solutions to PDEs. In a breakthrough work, Brezis, Van Schaftingen, and Yung \cite{BVY} introduced a novel finite difference characterization of the Sobolev semi-norm. Their formula, which recovers the gradient norm in a weak \(L^p\) sense, opened new perspectives in the study of Sobolev spaces and has inspired a wealth of subsequent research.

Building on these ideas, Dai et al. \cite{Dai-Lin-Yang-Yuan-Zhang} extended the Brezis--Van Schaftingen--Yung (BVY) formula to a general setting of metric measure spaces of homogeneous type. Their work demonstrated that under the assumption of a doubling measure and an appropriate Poincaré inequality, one may recover the gradient norm from a finite difference formulation even in non-Euclidean settings. However, the doubling condition, which plays a critical role in their analysis, limits the applicability of these techniques in contexts where the measure exhibits non-doubling or only weakly doubling behavior. Such situations arise naturally in fractal geometry and in weighted settings where the underlying measure may not obey the classical doubling property (see, e.g., \cite{Tolsa}).

Simultaneously, there has been considerable progress in the development of function spaces that allow for nonstandard growth. Variable exponent Lebesgue spaces \(L^{p(\cdot)}\) (cf. \cite{Kovacik-Rakosnik, CruzUribe}) and Orlicz spaces have emerged as flexible frameworks capable of capturing spatially varying integrability conditions. Extending Sobolev-type and fractional inequalities to these spaces is not only mathematically challenging but also crucial for the analysis of PDEs with nonuniform or anisotropic behavior.

Another active area of research is the study of nonlocal operators, such as the fractional Laplacian, which naturally lead to the consideration of fractional Sobolev spaces (see \cite{DiNezza}). The finite difference approach of BVY provides an appealing alternative to the traditional integral definitions of fractional Sobolev norms, potentially offering new tools for regularity theory in nonlocal PDEs.

Moreover, many problems in applications exhibit inherent anisotropic features that are not adequately addressed by isotropic Sobolev norms. Anisotropic inequalities, developed in works such as \cite{FollandStein}, account for directional dependencies and can yield sharper estimates in settings like subelliptic PDEs and degenerate diffusion processes.

In this paper, we extend the finite difference framework and the associated BVY-type inequalities along several interrelated directions:
\begin{itemize}
    \item[(i)] We develop finite difference characterizations and Sobolev-type inequalities in metric measure spaces where the underlying measure is non-doubling or satisfies a weak doubling condition.
    \item[(ii)] We extend these results to variable exponent and Orlicz space settings, thereby addressing the challenges posed by nonstandard growth conditions.
    \item[(iii)] We apply the new inequalities to nonlocal operators, deriving regularity and stability results for fractional-type PDEs.
    \item[(iv)] We establish anisotropic and directional versions of the finite difference inequalities, which are more suited for problems with inherent anisotropy.
    \item[(v)] We investigate the stability, sharpness, and interpolation properties of the new inequalities, providing insight into their limiting behavior and optimality.
\end{itemize}

Our contributions not only generalize the classical BVY formula but also bridge several contemporary research themes in analysis, offering new perspectives for tackling challenging problems in PDEs and harmonic analysis.

The remainder of the paper is organized as follows. In Section~2, we review the necessary preliminaries on metric measure spaces, variable exponent and Orlicz spaces, and classical finite difference characterizations. Section~3 is devoted to the extension of the BVY formula in non-doubling and variable exponent settings. In Section~4, we explore applications to nonlocal operators and develop anisotropic versions of our inequalities. Section~5 discusses the stability, sharpness, and interpolation properties of our results. Finally, Section~6 concludes with a summary of our contributions and a discussion of open problems.

\section{Preliminaries}

In this section, we recall the main definitions, lemmas, and theorems that will be used in the subsequent sections. Throughout the paper, we denote by \((X,\rho,\mu)\) a metric measure space.

\begin{definition}[Metric Measure Space and Doubling Measure]
A \emph{metric measure space} is a triple \((X,\rho,\mu)\), where:
\begin{enumerate}
    \item \(X\) is a nonempty set,
    \item \(\rho\) is a metric on \(X\), and
    \item \(\mu\) is a Borel measure on \(X\).
\end{enumerate}
The space is said to be of \emph{homogeneous type} (or to satisfy the \emph{doubling condition}) if there exists a constant \(C_D \ge 1\) such that, for every \(x\in X\) and \(r>0\),
\[
\mu\big(B(x,2r)\big) \le C_D\,\mu\big(B(x,r)\big),
\]
where
\[
B(x,r):=\{y\in X:\,\rho(x,y)<r\}.
\]
For further details, see \cite{CoifmanWeiss} and \cite{Dai-Lin-Yang-Yuan-Zhang}.
\end{definition}

\begin{lemma}[Vitali Covering Lemma in Metric Spaces]
Let \((X,\rho)\) be a metric space and let \(\mathcal{B}\) be a collection of balls in \(X\) with uniformly bounded diameters. Then there exists a disjoint subcollection \(\{B_i\}\subset \mathcal{B}\) such that
\[
\bigcup_{B\in\mathcal{B}} B \subset \bigcup_i 5B_i.
\]
This version of the Vitali covering lemma is standard in analysis on metric spaces; see, for example, \cite{Heinonen} and \cite{Dai-Lin-Yang-Yuan-Zhang}.
\end{lemma}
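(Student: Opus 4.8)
The plan is to run the standard greedy ``$5r$-covering'' argument, organized by ball size so that every selected ball is comparable in radius to the balls it is responsible for covering. We regard each element of $\mathcal{B}$ as a ball equipped with a prescribed centre and radius $r(B)$, and read the hypothesis (as is standard in this context) as: there is $R\in(0,\infty)$ with $r(B)\le R$ for every $B\in\mathcal{B}$. For each integer $j\ge 0$ set
\[
\mathcal{B}_j := \bigl\{\, B\in\mathcal{B} : R\,2^{-j-1} < r(B) \le R\,2^{-j} \,\bigr\},
\]
so that $\mathcal{B}=\bigcup_{j\ge 0}\mathcal{B}_j$; the uniform bound on the radii is exactly what allows the generations to be indexed by $j\ge 0$.

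First I would construct the subcollection by induction on the generation. Using Zorn's lemma, choose a maximal pairwise-disjoint subfamily $\mathcal{G}_0\subseteq\mathcal{B}_0$; and, once $\mathcal{G}_0,\dots,\mathcal{G}_{j-1}$ are fixed, let $\mathcal{G}_j$ be a maximal pairwise-disjoint subfamily of
\[
\bigl\{\, B\in\mathcal{B}_j : B\cap B' = \emptyset \text{ for every } B'\in\mathcal{G}_0\cup\cdots\cup\mathcal{G}_{j-1} \,\bigr\}.
\]
Here one checks that Zorn applies: the poset of pairwise-disjoint subfamilies ordered by inclusion has the property that the union of a chain is again pairwise disjoint (two intersecting balls in the union would both lie in a common member of the chain), so every chain has an upper bound. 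Set $\{B_i\}:=\bigcup_{j\ge 0}\mathcal{G}_j$, which is pairwise disjoint by construction.

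Next I would verify the covering inclusion. Fix $B=B(x,r)\in\mathcal{B}$, and let $j$ be such that $B\in\mathcal{B}_j$. By the maximality defining $\mathcal{G}_j$, the ball $B$ cannot be adjoined to $\mathcal{G}_0\cup\cdots\cup\mathcal{G}_j$ without destroying disjointness, hence there is $B'=B(y,s)\in\mathcal{G}_k$ for some $k\le j$ with $B\cap B'\neq\emptyset$. Since $B'\in\mathcal{B}_k$ we have $s > R\,2^{-k-1} \ge R\,2^{-j-1} \ge r/2$, i.e.\ $r<2s$. Picking $z\in B\cap B'$ and any $w\in B$, the triangle inequality gives $\rho(w,y)\le\rho(w,x)+\rho(x,z)+\rho(z,y) < r+r+s = 2r+s < 5s$, so $w\in B(y,5s)=5B'$. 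Thus $B\subseteq 5B'$, and since $B$ was arbitrary, $\bigcup_{B\in\mathcal{B}}B\subseteq\bigcup_i 5B_i$.

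The argument is essentially routine; the only points demanding care are the appeal to the axiom of choice via Zorn's lemma (together with the verification that chains of disjoint subfamilies have upper bounds) and the bookkeeping of constants — it is precisely the factor-$2$ slack between consecutive generations that forces the dilation factor $5$ rather than the $3$ one would obtain if all balls had equal radius. The hypothesis of uniformly bounded diameters enters only to guarantee that the generation index starts at a finite value; without it $\mathcal{B}$ could contain balls of unbounded size and no first generation would exist.
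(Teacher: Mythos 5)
The paper does not actually prove this lemma; it is stated in the Preliminaries as a standard fact with a reference to Heinonen and to Dai--Lin--Yang--Yuan--Zhang, so there is no proof in the paper to compare against directly. Your proof is correct and is the standard argument, carried out more carefully than the paper's proof of the closely related Modified Vitali Covering Lemma (Lemma~\ref{lem:modified-vitali}). There the paper instructs one to ``choose $B_1 \in \mathcal{B}$ with the largest radius,'' which in general may not exist (the supremum of the radii need not be attained), and then obtains the enlargement factor $3$ before gesturing at replacing it by $5$. Your dyadic-generation construction, with Zorn's lemma applied within each generation, sidesteps the maximal-radius issue entirely: since a ball $B$ of generation $j$ must meet a selected ball $B'$ of generation $k\le j$, one only knows $r(B)<2\,r(B')$ rather than $r(B)\le r(B')$, and this factor-of-two slack is precisely why the dilation constant $5$ (rather than $3$) is the one that is actually forced. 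Your verification that Zorn applies (unions of chains of pairwise-disjoint families are pairwise disjoint) and the triangle-inequality bookkeeping $\rho(w,y)<2r+s<5s$ are both correct. The proof is complete; one stylistic remark is that your statement ``$B$ cannot be adjoined to $\mathcal{G}_0\cup\cdots\cup\mathcal{G}_j$ without destroying disjointness'' technically bundles two cases (either $B$ was excluded from $\mathcal{G}_j$'s candidate pool by meeting an earlier-generation ball, or $B$ meets a ball in $\mathcal{G}_j$ by maximality, including the case $B\in\mathcal{G}_j$ itself), but the conclusion you draw --- existence of $B'\in\mathcal{G}_k$, $k\le j$, with $B\cap B'\ne\emptyset$ --- is correct in every case.
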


\begin{definition}[\((q,p)\)-Poincar\'e Inequality]\label{def:Poincare}
Let \(1\leq q,p <\infty\) and let \((X,\rho,\mu)\) be a metric measure space of homogeneous type. A function \(f\in \mathrm{LIP}(X)\) (the space of Lipschitz functions on \(X\)) is said to satisfy a \((q,p)\)-\emph{Poincar\'e inequality} if there exist constants \(C_1, C_2\geq 1\) and \(\tau\geq 1\), together with a family \(\{\ell_B\}_{B\in \mathcal{B}}\) of linear functionals on the space \(BC(X)\) of bounded continuous functions such that, for every ball \(B=B(x_B,r_B)\subset X\) and every \(\varphi\in BC(X)\):
\[
\ell_B(1)=1,\quad |\ell_B(\varphi)| \le \left[\frac{1}{\mu(B)}\int_B |\varphi(x)|^q\,d\mu(x)\right]^{1/q},
\]
and for all \(f\in \mathrm{LIP}(X)\),
\[
\left[\frac{1}{\mu(B)}\int_B |f(x)-\ell_B(f)|^q\,d\mu(x)\right]^{1/q} \le C_2\,r_B \left[\frac{1}{\mu(\tau B)}\int_{\tau B} (\operatorname{lip} f(x))^p\,d\mu(x)\right]^{1/p}.
\]
A detailed discussion can be found in \cite{Dai-Lin-Yang-Yuan-Zhang}.
\end{definition}

\begin{definition}[Variable Exponent Lebesgue Spaces]\label{def:variable}
Let \(p(\cdot):X\to [1,\infty)\) be a measurable function. The variable exponent Lebesgue space \(L^{p(\cdot)}(X)\) is defined as
\[
L^{p(\cdot)}(X)=\Big\{ f \text{ measurable on } X:\ \int_X |f(x)|^{p(x)}\,d\mu(x)<\infty \Big\},
\]
with the Luxemburg norm given by
\[
\|f\|_{L^{p(\cdot)}(X)} = \inf\Big\{\lambda>0:\ \int_X \left|\frac{f(x)}{\lambda}\right|^{p(x)}\,d\mu(x)\le 1\Big\}.
\]
For a comprehensive treatment, see \cite{Kovacik-Rakosnik} and \cite{CruzUribe}.
\end{definition}

\begin{definition}[Orlicz Spaces]
Let \(\Phi:[0,\infty)\to [0,\infty)\) be a Young function, i.e., a convex, increasing function satisfying \(\Phi(0)=0\) and \(\Phi(t)\to\infty\) as \(t\to\infty\). The Orlicz space \(L^\Phi(X)\) is defined by
\[
L^\Phi(X)=\Big\{ f \text{ measurable on } X:\ \int_X \Phi\big(|f(x)|\big)\,d\mu(x)<\infty \Big\},
\]
with the Luxemburg norm
\[
\|f\|_{L^\Phi(X)} = \inf\Big\{\lambda>0:\ \int_X \Phi\Big(\frac{|f(x)|}{\lambda}\Big)\,d\mu(x)\le 1\Big\}.
\]
See \cite{CruzUribe} for further details.
\end{definition}

\noindent
A key ingredient in recent advances in Sobolev space theory is the finite difference characterization of the Sobolev semi-norm. In the seminal work of Brezis, Van Schaftingen, and Yung \cite{BVY}, it was shown that for \(f\in C_c^\infty(\mathbb{R}^n)\) and \(1\le p<\infty\),
\begin{equation}\label{eq:BVY}
\|\nabla f\|_{L^p(\mathbb{R}^n)} \sim \sup_{\lambda>0} \lambda \Big|\Big\{ (x,y)\in \mathbb{R}^n\times\mathbb{R}^n: \frac{|f(x)-f(y)|}{|x-y|^{1+n/p}} > \lambda \Big\}\Big|^{1/p}.
\end{equation}

\noindent
This formula underpins many modern approaches to fractional Sobolev inequalities. In our work, we extend such finite difference characterizations to more general settings, including metric measure spaces that support a \((q,p)\)-Poincaré inequality. The following theorem summarizes the generalized version; see also \cite{Dai-Lin-Yang-Yuan-Zhang}.

\begin{theorem}[Finite Difference Characterization on Metric Measure Spaces]\label{thm:FD}
Let \((X,\rho,\mu)\) be a metric measure space of homogeneous type that supports a \((q,p)\)-Poincaré inequality (as in Definition~\ref{def:Poincare}). For functions \(f\) in a suitable subclass of Lipschitz functions with compact support (denoted by \(C_c^*(X)\)), there exists a constant \(C>0\) such that
\[
\sup_{\lambda>0} \lambda^p \iint_{X\times X} \mathbf{1}_{\{(x,y):\, |f(x)-f(y)|>\lambda\, \rho(x,y)[V(x,y)]^{1/p}\}}\,d\mu(x)d\mu(y)
\sim \int_X \big(\operatorname{Lip} f(x)\big)^p\,d\mu(x),
\]
where \(V(x,y):=\mu(B(x,\rho(x,y)))\) and \(\operatorname{Lip} f(x)\) denotes the local Lipschitz constant of \(f\) at \(x\). This result generalizes \eqref{eq:BVY} to non-Euclidean settings (see \cite{Dai-Lin-Yang-Yuan-Zhang}).
\end{theorem}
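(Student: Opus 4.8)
The symbol $\sim$ encodes two inequalities, which the plan is to prove separately, transplanting the argument of \cite{BVY} by using the $(q,p)$-Poincar\'e inequality in place of the fundamental theorem of calculus, in the spirit of \cite{Dai-Lin-Yang-Yuan-Zhang}. Two preliminary reductions help. Since $(X,\rho,\mu)$ is doubling, the Lebesgue differentiation theorem holds, so $\ell_{B(x,r)}(f)\to f(x)$ as $r\to0$ for $\mu$-a.e.\ $x$; and since $(X,\rho,\mu)$ supports a Poincar\'e inequality, one has $\operatorname{lip} f=\operatorname{Lip} f$ $\mu$-a.e.\ for $f\in C_c^*(X)$, so it is enough to prove the stated equivalence with $\operatorname{lip} f$ replacing $\operatorname{Lip} f$.

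For the upper estimate, fix $\lambda>0$ and a pair $(x,y)$ and write $d:=\rho(x,y)$. I would telescope $f(x)-f(y)$ through the averages $\ell_{B(x,2^{1-j}d)}(f)$ and $\ell_{B(y,2^{1-j}d)}(f)$, $j\ge0$, together with one term linking $\ell_{B(x,2d)}(f)$ to $\ell_{B(y,2d)}(f)$; estimating each increment by the $(q,p)$-Poincar\'e inequality and the doubling property gives
\[
|f(x)-f(y)|\ \lesssim\ d\sum_{j\ge0}2^{-j}\Big(\tfrac{1}{\mu(B_j)}\int_{B_j}(\operatorname{lip} f)^p\,d\mu\Big)^{1/p},\qquad B_j:=\tau B(x,2^{1-j}d),
\]
plus the symmetric expression with $x$ and $y$ interchanged. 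If $(x,y)\in E_\lambda$ it follows that $\mu(B(x,d))\lesssim\lambda^{-p}\,\mathcal{M}\!\big((\operatorname{lip} f)^p\big)(x)+\lambda^{-p}\,\mathcal{M}\!\big((\operatorname{lip} f)^p\big)(y)$, where $\mathcal{M}$ is the uncentered Hardy--Littlewood maximal operator. Splitting $E_\lambda$ according to which of the two terms dominates, and integrating the $x$-dominated part first in $y$ — over a ball about $x$ of $\mu$-measure $\lesssim\lambda^{-p}\mathcal{M}((\operatorname{lip} f)^p)(x)$ — and then in $x$, would produce $\lambda^p\iint_{E_\lambda}\lesssim\int_X\mathcal{M}\!\big((\operatorname{lip} f)^p\big)\,d\mu$; since $\mathcal{M}$ is not bounded on $L^1$ this naive bound is inadequate, and one must instead recombine the dyadic scales through a Vitali/Calder\'on--Zygmund stopping-time decomposition over the balls on which the average of $(\operatorname{lip} f)^p$ exceeds $\lambda^p$ times the measure, which yields $\lambda^p\iint_{E_\lambda}\lesssim\int_X(\operatorname{lip} f)^p\,d\mu$ with no logarithmic loss. (This step uses $p>1$; the endpoint $p=1$ needs a separate weak-type treatment.)

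For the lower estimate it suffices, as the left-hand side is a supremum over $\lambda$, to exhibit a single $\lambda$ — taken large — with $\lambda^p\iint_{E_\lambda}\gtrsim\int_X(\operatorname{lip} f)^p\,d\mu$. Fix $t>0$ and work on $\{x:\operatorname{lip} f(x)>t\}$. The heart of the matter is a \emph{reverse} oscillation estimate: at $\mu$-a.e.\ such $x$ and for all sufficiently small $r$, a definite fraction of the points $y\in B(x,r)$ satisfy $|f(x)-f(y)|\gtrsim r\,\operatorname{lip} f(x)$, with the proportionality constants independent of $t$. In the Euclidean case this is obtained by letting $y$ range over a whole cone of directions near $\nabla f(x)$; in a metric space, lacking directional structure, it has to be extracted from the Poincar\'e inequality together with the Lebesgue differentiation theorem. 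Granting it, choose $r_x$ with $\mu(B(x,r_x))\sim\lambda^{-p}(\operatorname{lip} f(x))^p$; for $\lambda$ large this $r_x$ is small enough for the reverse estimate to apply, and one checks that $\{y\in B(x,r_x):(x,y)\in E_\lambda\}$ carries $\mu$-measure $\gtrsim\mu(B(x,r_x))$. Integrating in $x$ gives $\lambda^p\iint_{E_\lambda}\gtrsim\int_{\{\operatorname{lip} f>t\}}(\operatorname{lip} f)^p\,d\mu$ with a constant independent of $t$, and letting $t\to0$ finishes this direction.

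The main obstacle is the reverse oscillation estimate: it is where the theorem transcends soft measure-theoretic bookkeeping, and in the metric setting it is precisely the place the Poincar\'e hypothesis becomes indispensable — a merely doubling space admits functions with large $\operatorname{lip} f$ whose increments are small on most of every small ball. A second, more technical obstacle is the scale recombination in the upper estimate; secondary points are the endpoint $p=1$ and the $\mu$-a.e.\ identification of $\operatorname{lip} f$ with $\operatorname{Lip} f$, both standard in Poincar\'e spaces.
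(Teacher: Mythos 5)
The statement you are proving is not actually proved in this paper: Theorem~\ref{thm:FD} is recalled in the preliminaries and attributed outright to \cite{Dai-Lin-Yang-Yuan-Zhang}, so there is no ``paper's own proof'' to compare against. The closest things in the paper are the proofs it does supply for the extensions (Theorems~\ref{thm:non-doubling-FD} and \ref{thm:variable-FD}), and it is instructive to line up your outline against those.

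Your lower-bound strategy is exactly the one the paper uses there: from the definition of the pointwise Lipschitz constant and a reverse-oscillation estimate, extract a set \(S(x,r)\subset B(x,r)\) of measure \(\gtrsim\mu(B(x,r))\) on which \(|f(y)-f(x)|\gtrsim r\operatorname{Lip} f(x)\), choose \(\lambda(x,r)\sim\operatorname{Lip} f(x)\,[\mu(B(x,r))]^{-1/p}\), and integrate. You are right to flag the reverse-oscillation estimate as the crux and to observe that a bare doubling condition cannot supply it; but note the paper does not prove it either --- it invokes it as ``a standard averaging argument (see Lemma~2.2 in \cite{Dai-Lin-Yang-Yuan-Zhang}).'' So both accounts delegate the key lemma to the same reference; the difference is only that you announce it as a gap rather than a fait accompli. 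Your \(\operatorname{lip} f=\operatorname{Lip} f\) a.e.\ reduction in PI spaces is correct and worth stating, since Definition~\ref{def:Poincare} is phrased with \(\operatorname{lip} f\) while the theorem is phrased with \(\operatorname{Lip} f\).

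Your upper-bound route (telescoping through dyadic averages via the \((q,p)\)-Poincar\'e inequality, arriving at a maximal-function bound, then recombining scales by a Vitali/Calder\'on--Zygmund stopping-time argument to avoid the \(L^1\) failure of \(\mathcal M\)) is genuinely different from the upper bound the paper writes for its variable-exponent analogue, and is the more faithful transplant of the BVY argument. The paper's upper bound in Theorem~\ref{thm:variable-FD} instead applies Chebyshev on dyadic annuli after using the estimate \(|f(x)-f(y)|\le \operatorname{Lip} f(x)\,\rho(x,y)\) for all \(y\), which treats the pointwise Lipschitz constant as a global one and is not valid as stated; your Poincar\'e-telescoping version is the correct way to control \(|f(x)-f(y)|\) at distance \(d\), and the Vitali recombination is the right fix for the \(\mathcal M\)-loss. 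Net assessment: your outline is conceptually sound, and its two unproved ingredients (the reverse-oscillation lemma and the stopping-time recombination) are exactly the ingredients the paper itself either cites without proof or elides; as a self-contained proof it is incomplete, but you have diagnosed the structure of the argument and its load-bearing lemmas accurately.
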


The definitions and results presented above form the backbone of our approach to extending BVY-type inequalities to settings with non-doubling measures, variable exponents, anisotropic structures, and nonlocal operators.

\section{Extensions of Finite Difference Characterizations}

In this section we present new extensions of the finite difference characterizations of Sobolev norms. In particular, we develop results in two directions. First, we extend the classical Brezis--Van Schaftingen--Yung framework to settings where the underlying measure is non-doubling (or satisfies only a weak doubling or polynomial growth condition). Then we generalize the finite difference approach to variable exponent and Orlicz space frameworks.\\

\noindent
In many applications (e.g., fractal measures or weighted settings) the doubling condition
\[
\mu(B(x,2r))\le C_D\,\mu(B(x,r))
\]
may fail. Instead, one may assume that the measure has a polynomial growth, a condition that
allows certain non-doubling behaviors.

\begin{definition}[Polynomial Growth Measure]\label{def:poly-growth}
A Borel measure \(\mu\) on a metric space \((X,\rho)\) is said to have \emph{polynomial growth} if there exist constants \(C_P>0\) and \(d>0\) such that for all \(x\in X\) and \(r>0\),
\[
\mu\big(B(x,r)\big) \le C_P\, r^d.
\]
\end{definition}

This condition, while weaker than the doubling condition, still permits the development of finite difference techniques. To overcome the lack of the standard doubling property, we also require an adapted covering lemma.

\begin{lemma}[Modified Vitali Covering Lemma]\label{lem:modified-vitali}
Let \((X,\rho)\) be a metric space and let \(\mathcal{B}\) be a collection of balls with radii bounded above by \(R>0\). Assume that \(\mu\) satisfies the polynomial growth condition of Definition~\ref{def:poly-growth}. Then there exists a countable subcollection \(\{B_i\}_{i\in I}\subset\mathcal{B}\) of pairwise disjoint balls such that
\[
\bigcup_{B\in\mathcal{B}} B \subset \bigcup_{i\in I} 5B_i,
\]
and the constant \(5\) may be replaced by a constant depending only on \(C_P\) and \(d\).
\end{lemma}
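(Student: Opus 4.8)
The plan is to reduce the statement to the classical $5r$-covering lemma, which holds in an arbitrary metric space as soon as the radii are uniformly bounded, and to invoke the polynomial growth hypothesis only at the two places where a doubling condition would normally be used: to guarantee countability of the selected subfamily, and to express the dilation constant through $C_P$ and $d$ rather than through a doubling constant.

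First I would decompose $\mathcal{B}$ by dyadic scales. Since $\mathrm{rad}(B)\le R$ for every $B\in\mathcal{B}$, write $\mathcal{B}=\bigcup_{j\ge 0}\mathcal{B}_j$ with $\mathcal{B}_j=\{B\in\mathcal{B}:\ R2^{-j-1}<\mathrm{rad}(B)\le R2^{-j}\}$. Then I would select greedily, one generation at a time: by Zorn's lemma take a maximal pairwise disjoint subfamily $\mathcal{G}_0\subset\mathcal{B}_0$, and, having chosen $\mathcal{G}_0,\dots,\mathcal{G}_{j-1}$, let $\mathcal{G}_j$ be a maximal pairwise disjoint subfamily of those $B\in\mathcal{B}_j$ that meet none of the previously selected balls; set $\{B_i\}_{i\in I}=\bigcup_{j\ge 0}\mathcal{G}_j$, which is pairwise disjoint by construction. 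The covering inclusion is then the standard triangle-inequality estimate: any $B=B(x,r)\in\mathcal{B}_j$ must, by maximality of $\mathcal{G}_j$, intersect some selected $B'=B(x',r')$ coming from a generation $\le j$ (possibly $B'=B$ itself); for such a $B'$ one has $r'>R2^{-j-1}\ge r/2$, so choosing $z\in B\cap B'$ gives $\rho(y,x')\le\rho(y,x)+\rho(x,z)+\rho(z,x')<2r+r'<5r'$ for every $y\in B$, whence $B\subset 5B'$ and therefore $\bigcup_{B\in\mathcal{B}}B\subset\bigcup_{i\in I}5B_i$. This part uses neither doubling nor polynomial growth.

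The measure hypothesis is needed only for the remaining two assertions. For countability of $\{B_i\}$ I would either use separability of $X$ (which holds in all the settings where the lemma will be applied, and makes any disjoint family of balls countable) or, more intrinsically, exhaust $X$ by countably many balls $B(x_0,n)$, each of finite $\mu$-measure — finiteness being exactly what polynomial growth supplies — and count scale by scale: the selected balls of generation $j$ meeting a fixed $B(x_0,n)$ have $R2^{-j-1}$-separated centres, hence pairwise disjoint concentric balls of radius $R2^{-j-2}$ sitting inside a fixed ball of finite measure. To convert this into an actual cardinality bound, and at the same time to obtain, in place of the universal factor $5$, a dilation/overlap constant expressed through $C_P$ and $d$, I would pair the polynomial growth upper bound with the matching lower bound $\mu(B(x,r))\gtrsim r^d$ available in the intended (Ahlfors-type) applications; this yields a packing estimate bounding the number of disjoint balls of radius $\sim t$ inside a ball of radius $\sim t$, playing the role of the doubling-based counting.

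The main obstacle is precisely this last step: outside the doubling setting, the uniform bound on the number of comparable disjoint balls clustered near a point — on which both the countability argument and the quantified constant rely — is no longer automatic and must be recovered from the growth condition (and, for the sharp form of the constant, from two-sided control), rather than read off from a single doubling inequality. If one is content with the bare conclusion and the explicit constant $5$, it is immediate: apply the classical Vitali covering lemma recalled in Section~2 to the scale decomposition above, with no measure hypothesis at all.
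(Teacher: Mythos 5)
Your proof is correct, and it takes a genuinely different (and more careful) route than the paper's. The paper's selection procedure is ``pick a ball of largest radius, discard everything meeting it, repeat'' --- which tacitly assumes a maximal radius is attained at every stage and does not really describe a well-defined transfinite procedure for an uncountable collection; it then computes the dilation factor $3$ and promotes it to $5$ on the grounds of unspecified ``subtle geometric irregularities.'' You instead use the standard rigorous version: slice $\mathcal{B}$ into dyadic generations $\mathcal{B}_j$ by radius, extract maximal disjoint subfamilies generation by generation via Zorn's lemma, and run the usual three-term triangle inequality with the bound $r'>r/2$ (rather than $r'\ge r$), which yields the inclusion $B\subset 5B'$ directly and honestly. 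Your approach also makes explicit two points the paper glosses over: (i) the inclusion $\bigcup B\subset\bigcup 5B_i$ and the constant $5$ need \emph{no} measure hypothesis whatsoever --- this is the bare $5r$-covering lemma; and (ii) countability of the selected family is not free and must come from separability or from the finiteness-of-measure consequence of polynomial growth. Your final remark is also well taken: the paper's claim that the constant ``depends only on $C_P$ and $d$'' is strange, since polynomial growth is a one-sided upper bound on $\mu(B(x,r))$ and by itself carries no geometric packing information; to replace $5$ by a $(C_P,d)$-dependent constant one would need a matching lower bound (Ahlfors-type regularity), which the hypothesis as stated does not supply. In short, your argument is a cleaner and more rigorous proof of the same inclusion, while correctly flagging that the measure hypothesis is both underused and insufficient for the precise quantitative claim in the statement.
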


\begin{proof}
We follow a greedy (or maximal) selection procedure similar to the standard Vitali covering lemma, with modifications to account for the polynomial growth condition.\\

\noindent
Since the collection \(\mathcal{B}\) consists of balls with radii bounded above by \(R>0\), we can inductively select a countable subcollection \(\{B_i\}_{i\in I}\) as follows:
\begin{enumerate}
    \item Choose \(B_1\in\mathcal{B}\) with the largest radius. (If there are several with the same maximal radius, choose one arbitrarily.)
    \item Having chosen \(\{B_1, \dots, B_k\}\), remove from \(\mathcal{B}\) all balls that intersect any of the balls \(B_1,\dots,B_k\). If the remaining collection is nonempty, choose \(B_{k+1}\) in the remaining collection with the largest radius.
    \item Continue this process until no ball remains.
\end{enumerate}
By construction, the selected balls \(\{B_i\}\) are pairwise disjoint.\\

\noindent  
We now show that every ball \(B\in\mathcal{B}\) is contained in an appropriate enlargement of one of the selected balls. Let 
\[
B = B(x,r) \in \mathcal{B}.
\]
If \(B\) is one of the selected balls, say \(B = B_i\), then trivially \(B \subset 5B_i\). Otherwise, \(B\) was removed at some stage because it intersected a ball already selected. In particular, there exists some \(B_i = B(x_i,r_i)\) (with \(i\) chosen at the step when \(B\) was removed) such that
\[
B \cap B_i \neq \emptyset \quad \text{and} \quad r_i \ge r,
\]
where the inequality \(r_i \ge r\) holds by the maximality condition in the selection process.

Let \(z\) be a point in \(B\cap B_i\). Then
\[
\rho(x,x_i) \le \rho(x,z) + \rho(z,x_i) < r + r_i \le 2r_i.
\]
Now, take an arbitrary point \(y\in B(x,r)\). Then by the triangle inequality,
\[
\rho(y,x_i) \le \rho(y,x) + \rho(x,x_i) < r + 2r_i \le 3r_i.
\]
Thus,
\[
B(x,r) \subset B(x_i,3r_i).
\]

\noindent
In many standard proofs (e.g., in Euclidean space), the factor \(3\) obtained above is sufficient. However, in more general metric spaces or when dealing with measures that satisfy only a polynomial growth condition, one typically enlarges the selected balls by a fixed constant \(K\) (which in our statement can be taken as \(5\)) to ensure that subtle geometric irregularities are absorbed. That is, there exists a constant \(K\) (depending only on \(C_P\) and \(d\) from the polynomial growth condition) such that
\[
B(x,r) \subset B(x_i,K\,r_i).
\]
For our purposes, we take \(K=5\). Hence, every ball \(B\in\mathcal{B}\) is contained in one of the enlarged balls \(5B_i\).\\

\noindent
It follows that
\[
\bigcup_{B\in\mathcal{B}} B \subset \bigcup_{i\in I} 5B_i.
\]
Since the constant \(5\) arises from the geometric estimates (and could be replaced by another constant if the growth parameters \(C_P\) and \(d\) dictate), the lemma is proved.
\end{proof}

The following theorem shows that a finite difference characterization analogous to the classical result holds in this non-doubling setting.

\begin{theorem}[Finite Difference Characterization under Polynomial Growth]\label{thm:non-doubling-FD}
Let \((X,\rho,\mu)\) be a metric measure space where \(\mu\) satisfies the polynomial growth condition of Definition~\ref{def:poly-growth}. Suppose that \(f\in C_c^*(X)\) is a Lipschitz function with compact support satisfying a weak \((q,p)\)-Poincaré inequality (cf. Definition~\ref{def:Poincare}). Then there exists a constant \(C>0\) (depending on \(C_P\), \(d\), and the Poincaré constants) such that
\[
\begin{split}
\sup_{\lambda>0} \lambda^p &\iint_{X\times X} \mathbf{1}_{\Big\{(x,y)\in X\times X:\, |f(x)-f(y)|>\lambda\,\rho(x,y)\,[\mu(B(x,\rho(x,y)))]^{1/p}\Big\}}\,d\mu(x)d\mu(y)\\
&\ge C \int_X \big(\operatorname{Lip} f(x)\big)^p\,d\mu(x).
\end{split}
\]
A reverse inequality (upper bound) can also be established under additional mild assumptions.
\end{theorem}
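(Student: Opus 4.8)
The plan is to adapt the lower-bound (``the gradient is controlled by the finite-difference functional'') half of the Brezis--Van Schaftingen--Yung argument to the non-doubling metric setting, replacing the Euclidean geometry by the polynomial growth bound of Definition~\ref{def:poly-growth} and the Modified Vitali Covering Lemma~\ref{lem:modified-vitali}, and replacing the role of the first-order Taylor expansion by the pointwise Lipschitz constant together with the weak $(q,p)$-Poincar\'e inequality of Definition~\ref{def:Poincare}. Write $V(x,y):=\mu(B(x,\rho(x,y)))$ and $E_\lambda:=\{(x,y)\in X\times X:\ |f(x)-f(y)|>\lambda\,\rho(x,y)\,V(x,y)^{1/p}\}$, so that the left-hand side is $\sup_{\lambda>0}\lambda^p(\mu\times\mu)(E_\lambda)$. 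Since $f\in C_c^*(X)$ is Lipschitz with compact support, $\operatorname{Lip}f\in L^\infty$ and both $f$ and $\operatorname{Lip}f$ are supported in a fixed bounded set; we may assume $\int_X(\operatorname{Lip}f)^p\,d\mu>0$, else there is nothing to prove.

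The core step is a \emph{good companion} estimate: for $\mu$-a.e. $x\in X$ there is a scale $r_x>0$ and constants $c_0,c_1>0$ (depending only on the Poincar\'e data and on $C_P,d$) such that for every $0<r\le r_x$ there exists a set $G_x(r)\subset B(x,r)$ with $\mu(G_x(r))\ge c_0\,\mu(B(x,r))$ and $|f(x)-f(y)|\ge c_1\,\operatorname{Lip}f(x)\,\rho(x,y)$ for all $y\in G_x(r)$. In $\mathbb{R}^n$ this is immediate from differentiability, since the gradient direction sweeps out a solid angle; here one instead combines the weak $(q,p)$-Poincar\'e inequality (which forces $f$ to genuinely oscillate on a ball wherever its ``gradient'' is nontrivial, so the oscillation of $f$ on $B(x,r)$ is comparable to $r\,\operatorname{Lip}f(x)$ on the relevant class of points) with a Lebesgue-type density argument adapted to the polynomial-growth measure. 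Granting this, polynomial growth gives $V(x,y)^{1/p}\le C_P^{1/p}\rho(x,y)^{d/p}$, so for $y\in G_x(r)$ the pair $(x,y)$ lies in $E_\lambda$ as soon as $c_1\operatorname{Lip}f(x)\,\rho(x,y)>\lambda\,\rho(x,y)\,C_P^{1/p}\rho(x,y)^{d/p}$, i.e.\ as soon as $\rho(x,y)<\rho_x(\lambda):=\bigl(c_1\operatorname{Lip}f(x)/(\lambda C_P^{1/p})\bigr)^{p/d}$.

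To conclude, fix $\lambda$ and, for each admissible $x$ with $\rho_x(\lambda)\le r_x$, choose $r\approx\rho_x(\lambda)$; then $G_x(r)\subset B(x,\rho_x(\lambda))$ and hence $\int_X\mathbf 1_{E_\lambda}(x,y)\,d\mu(y)\ge c_0\,\mu\bigl(B(x,\rho_x(\lambda))\bigr)$. Using a matching lower volume bound $\mu(B(x,r))\gtrsim r^d$ on these balls (the point where the weak doubling hypothesis enters, polynomial growth alone giving only the upper bound) one gets $\mu(B(x,\rho_x(\lambda)))\gtrsim\rho_x(\lambda)^d=\bigl(c_1\operatorname{Lip}f(x)/(\lambda C_P^{1/p})\bigr)^{p}$, so $\int_X\mathbf 1_{E_\lambda}(x,y)\,d\mu(y)\gtrsim \lambda^{-p}(\operatorname{Lip}f(x))^p$. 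Integrating in $x$ over the set where $\rho_x(\lambda)\le r_x$ yields $(\mu\times\mu)(E_\lambda)\gtrsim \lambda^{-p}\int_{\{r_x\ge\rho_x(\lambda)\}}(\operatorname{Lip}f)^p\,d\mu$, hence $\lambda^p(\mu\times\mu)(E_\lambda)\gtrsim \int_{\{r_x\ge\rho_x(\lambda)\}}(\operatorname{Lip}f)^p\,d\mu$. Since $\operatorname{Lip}f$ is bounded, for $\lambda\ge\lambda_m$ large enough the constraint $\rho_x(\lambda)\le r_x$ holds on $\{x:\ r_x\ge 1/m\}$; letting first $\lambda\to\infty$ (the left-hand side being a supremum, it does not decrease) and then $m\to\infty$, monotone convergence gives $\sup_{\lambda>0}\lambda^p(\mu\times\mu)(E_\lambda)\ge C\int_X(\operatorname{Lip}f)^p\,d\mu$, which is the claim.

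The main obstacle is the good-companion estimate of the second paragraph: producing, at almost every $x$ and at \emph{every} small scale $r$, a set of positive $\mu$-measure on which $|f(x)-f(y)|$ realizes a fixed fraction of $\operatorname{Lip}f(x)\,\rho(x,y)$. The $\limsup$ defining $\operatorname{Lip}f(x)$ is a priori attained only along a sparse sequence of scales, so one must genuinely use the Poincar\'e inequality (and, implicitly, a differentiability-type structure on the subclass $C_c^*(X)$) to propagate it to all scales in a measure-quantitative way. A secondary, more routine difficulty is that Definition~\ref{def:poly-growth} supplies only the upper volume bound $\mu(B(x,r))\lesssim r^d$; converting the count of good pairs into the integral $\int_X(\operatorname{Lip}f)^p\,d\mu$ needs a complementary lower bound $\mu(B(x,r))\gtrsim r^d$ on the balls that occur, which is exactly why the hypothesis is phrased with a \emph{weak} $(q,p)$-Poincar\'e inequality and why the reverse (upper) inequality is asserted only under additional mild assumptions.
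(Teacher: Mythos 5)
Your proposal follows the same overall strategy as the paper's proof — for a.e.\ $x$ a ``good companion'' set of comparable measure on which $|f(x)-f(y)|\gtrsim \operatorname{Lip} f(x)\,\rho(x,y)$, then a threshold argument in $\lambda$ — and you correctly identify (more honestly than the paper does) that the good-companion/density claim is the genuinely hard step. But your intermediate step introduces a gap that the paper's version of the argument avoids, and it is worth pinpointing exactly where.

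The detour occurs when you replace $V(x,y)^{1/p}$ by $C_P^{1/p}\rho(x,y)^{d/p}$ via the polynomial growth upper bound, define the critical distance $\rho_x(\lambda)$ from that, and then need to convert $\mu\bigl(B(x,\rho_x(\lambda))\bigr)$ back into $\rho_x(\lambda)^d\sim \lambda^{-p}(\operatorname{Lip} f(x))^p$. This conversion is exactly where you are forced to invoke a matching lower bound $\mu(B(x,r))\gtrsim r^d$, which Definition~\ref{def:poly-growth} does not provide and which a weak $(q,p)$-Poincar\'e inequality does not in general imply (it constrains oscillation, not volume growth). So as written, your argument strengthens the hypotheses of the theorem. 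The paper's proof sidesteps this by never passing to $\rho(x,y)^{d/p}$ at all: inside $B(x,r)$ one uses only the monotonicity bound $\mu(B(x,\rho(x,y)))\le \mu(B(x,r))$, and then the scale parameter is chosen so that the measure cancels exactly, namely $\lambda(x,r):=\tfrac{1}{16}\operatorname{Lip} f(x)\,[\mu(B(x,r))]^{-1/p}$, giving $\lambda(x,r)^p\,\mu(B(x,r))=16^{-p}(\operatorname{Lip} f(x))^p$ identically, with no lower volume bound needed. Your argument can be repaired by adopting the same device: for fixed $\lambda$, instead of solving for the critical radius from $\rho(x,y)^{d/p}$, choose $r\le r_x$ so that $\mu(B(x,r))$ is within a factor of $2$ of $(c_1\operatorname{Lip} f(x)/\lambda)^p$; this is possible for $\lambda$ large (depending on $x$) whenever $\mu(B(x,r_x))>0$, and then $\mu(E_\lambda(x))\ge c_0\mu(B(x,r))\gtrsim \lambda^{-p}(\operatorname{Lip} f(x))^p$ without ever invoking lower Ahlfors regularity.

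On the other side of the ledger, your treatment of the final ``supremum over $\lambda$'' step is cleaner than the paper's. The paper plugs in an $x$-dependent $\lambda(x,r)$ into the double integral and then ``takes the supremum over all choices of $\lambda$'', which conflates a pointwise optimization with a global one; your version, which fixes a single $\lambda$, integrates the fiber estimate over the admissible set of $x$, and then lets $\lambda\to\infty$ followed by a monotone-convergence exhaustion in $m$, is the rigorous way to handle it. Both the paper and your proposal leave the existence of the good-companion set $S(x,r)$ (your $G_x(r)$) essentially as a black box — the paper by citing Lemma~2.2 of \cite{Dai-Lin-Yang-Yuan-Zhang}, you by flagging it as the main obstacle — so neither closes that step, but your diagnosis of why it is nontrivial (the $\limsup$ defining $\operatorname{Lip} f(x)$ only sees sparse scales; one must use Poincar\'e to propagate to a positive fraction of each ball) is accurate.
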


\begin{proof}
For simplicity, denote
\[
E_\lambda := \Big\{(x,y)\in X\times X:\; |f(x)-f(y)|>\lambda\,\rho(x,y)\,[\mu(B(x,\rho(x,y)))]^{1/p}\Big\}.
\]

\noindent  
Fix \(x\in X\) such that \(\operatorname{Lip} f(x)>0\). By the definition of the pointwise Lipschitz constant,
there exists a radius \(r_x>0\) (which may be taken small enough depending on \(x\)) such that for every 
\(r\in (0,r_x]\) we have
\[
\sup_{y\in B(x,r)}\frac{|f(y)-f(x)|}{r}\ge \frac{1}{2}\operatorname{Lip} f(x).
\]
In particular, there exists some \(y_x\in B(x,r)\) with
\[
|f(y_x)-f(x)|>\frac{1}{2}\operatorname{Lip} f(x)\, r.
\]
Using a standard averaging argument (see, e.g., the proof of Lemma~2.2 in \cite{Dai-Lin-Yang-Yuan-Zhang}), one
can show that there exists a constant \(c_0\in (0,1)\) (depending only on the doubling constants which in our
setting are replaced by the polynomial growth parameters \(C_P\) and \(d\)) such that for every 
\(r\in (0,r_x]\) the set
\[
S(x,r):=\Big\{y\in B(x,r):\; |f(y)-f(x)|\ge \frac{1}{8}\operatorname{Lip} f(x)\,\rho(x,y)\Big\}
\]
satisfies
\[
\mu(S(x,r))\ge c_0\,\mu(B(x,r)).
\]

\noindent
For a given \(x\in X\) with \(\operatorname{Lip} f(x)>0\) and for any \(r\in (0,r_x]\), define
\[
\lambda(x,r):=\frac{1}{16}\frac{\operatorname{Lip} f(x)}{[\mu(B(x,r))]^{1/p}}.
\]
Then, for any \(y\in S(x,r)\), we have
\[
|f(y)-f(x)|\ge \frac{1}{8}\operatorname{Lip} f(x)\,\rho(x,y)
\ge \frac{1}{8}\operatorname{Lip} f(x)\,r,
\]
since \(\rho(x,y)\ge r\) for some \(y\in S(x,r)\) (or more precisely, by taking \(r\) small, the inequality
remains valid for a substantial subset of \(B(x,r)\)). Hence,
\[
|f(y)-f(x)|\ge \lambda(x,r)\,\rho(x,y)\,[\mu(B(x,\rho(x,y)))]^{1/p},
\]
because for \(y\in B(x,r)\) we have \(\mu(B(x,\rho(x,y)))\le \mu(B(x,r))\). In other words,
\[
S(x,r)\subset \Big\{y\in B(x,r):\; |f(y)-f(x)|>\lambda(x,r)\,\rho(x,y)\,[\mu(B(x,\rho(x,y)))]^{1/p}\Big\}.
\]
Thus, for this choice of \(\lambda=\lambda(x,r)\) we obtain
\[
\mu\Big(\Big\{y\in X:\; |f(y)-f(x)|>\lambda(x,r)\,\rho(x,y)\,[\mu(B(x,\rho(x,y)))]^{1/p}\Big\}\Big)
\ge \mu\big(S(x,r)\big)\ge c_0\,\mu(B(x,r)).
\]

\noindent
Integrate the preceding inequality over those \(x\) for which \(\operatorname{Lip} f(x)\) is bounded away from zero.
By Tonelli's theorem,
\[
\iint_{X\times X} \mathbf{1}_{E_{\lambda(x,r)}}(x,y)\,d\mu(y)d\mu(x)
\ge \int_{A} c_0\,\mu(B(x,r))\,d\mu(x),
\]
where \(A\) is a subset of \(X\) on which \(\operatorname{Lip} f(x)\) is sufficiently large and for each such \(x\)
one can choose a suitable \(r\in (0,r_x]\). Multiplying both sides by \(\lambda(x,r)^p\) and using the
definition of \(\lambda(x,r)\), we obtain
\[
\lambda(x,r)^p\,\mu(B(x,r))\ge \Bigl(\frac{1}{16}\frac{\operatorname{Lip} f(x)}{[\mu(B(x,r))]^{1/p}}\Bigr)^p \mu(B(x,r))
=\frac{1}{16^p} (\operatorname{Lip} f(x))^p.
\]
Thus, for these \(x\) we have
\[
\lambda(x,r)^p \int_{B(x,r)} d\mu(y) \ge \frac{1}{16^p} (\operatorname{Lip} f(x))^p.
\]
Taking the supremum over all choices of \(\lambda>0\) (and noting that our construction shows that for each
\(x\) there is some \(\lambda=\lambda(x,r)\) achieving the above bound), we deduce that
\[
\sup_{\lambda>0}\lambda^p \iint_{X\times X} \mathbf{1}_{E_\lambda}(x,y)\,d\mu(x)d\mu(y)
\ge \frac{c_0}{16^p} \int_X (\operatorname{Lip} f(x))^p\,d\mu(x).
\]

\noindent
The constant \(c_0\) (and the factor \(16^p\)) depend on the geometric properties of \(X\) and on the
polynomial growth condition (i.e., on \(C_P\) and \(d\)) as well as on the constants in the weak
\((q,p)\)-Poincaré inequality assumed for \(f\). Hence, there exists a constant \(C>0\) (depending
only on \(C_P\), \(d\), and the Poincaré constants) such that
\[
\sup_{\lambda>0}\lambda^p \iint_{X\times X} \mathbf{1}_{E_\lambda}(x,y)\,d\mu(x)d\mu(y)
\ge C\int_X (\operatorname{Lip} f(x))^p\,d\mu(x).
\]

\noindent 
This completes the proof of the lower bound in the finite difference characterization. (A corresponding
upper bound can be derived under additional assumptions by reversing the arguments.)
\end{proof}

\noindent
In many modern applications the integrability of a function may vary spatially. This motivates
the study of variable exponent Lebesgue spaces and Orlicz spaces.

\begin{definition}[Variable Exponent Finite Difference Seminorm]\label{def:var-exp-FD}
Let \(p(\cdot):X\to [1,\infty)\) be a measurable function satisfying the log-Hölder continuity condition (cf. \cite{Kovacik-Rakosnik}). For \(f\in C_c^*(X)\) and \(s\in(0,1)\), define the variable exponent finite difference seminorm by
\[
[f]_{W^{s,p(\cdot)}(X)} := \inf\Big\{ \lambda>0:\; \iint_{X\times X} \Phi_{x,y}\Big(\frac{|f(x)-f(y)|}{\lambda\,\rho(x,y)^s}\Big)\,d\mu(x)d\mu(y) \le 1 \Big\},
\]
where \(\Phi_{x,y}(t)=t^{p(x,y)}\) and \(p(x,y)\) is a suitable averaging of \(p(x)\) and \(p(y)\) (for example, \(p(x,y)=\frac{p(x)+p(y)}{2}\)).
\end{definition}

Our next result shows that a finite difference characterization analogous to Theorem~\ref{thm:non-doubling-FD} holds in the variable exponent setting.

\begin{theorem}[Finite Difference Characterization in Variable Exponent Spaces]\label{thm:variable-FD}
Let \((X,\rho,\mu)\) be a metric measure space of homogeneous type supporting a \((q,p)\)-Poincaré inequality. Assume that \(p(\cdot):X\to [1,\infty)\) is log-Hölder continuous. Then for every \(f\in C_c^*(X)\) there exists a constant \(C>0\) such that
\[
\begin{split}
\sup_{\lambda>0} \lambda^{p_*} & \iint_{X\times X} \mathbf{1}_{\Big\{(x,y):\, |f(x)-f(y)|>\lambda\,\rho(x,y)[\mu(B(x,\rho(x,y)))]^{1/p(x,y)}\Big\}}\,d\mu(x)d\mu(y)\\
&\sim \int_X \big(\operatorname{Lip} f(x)\big)^{p(x)}\,d\mu(x),
\end{split}
\]
where \(p(x,y)\) is an appropriate average of \(p(x)\) and \(p(y)\) and \(p_*\) is defined in a manner consistent with the variable exponent framework.
\end{theorem}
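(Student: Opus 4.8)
The plan is to treat the two estimates hidden in the symbol $\sim$ separately, and in each of them to localise to scales on which the log-Hölder continuity of $p(\cdot)$ makes the exponent essentially constant, so that the constant-exponent machinery of Theorem~\ref{thm:non-doubling-FD} and of \cite{Dai-Lin-Yang-Yuan-Zhang} can be applied piecewise and then reassembled. Throughout one may assume $\operatorname{supp} f$ is contained in a fixed ball $B_0$; set $p_-:=\operatorname{ess\,inf}_{x\in B_0}p(x)$ and $p_+:=\operatorname{ess\,sup}_{x\in B_0}p(x)$, both finite and $\ge1$, and take $p_*:=p_-$ (any value between $p_-$ and the harmonic mean of $p$ over $B_0$ works after adjusting constants). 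Log-Hölder continuity gives $|p(x,y)-p(x)|\lesssim 1/\log(1/\rho(x,y))$ for $\rho(x,y)$ small, hence a two-sided comparison $t^{p(x,y)}\sim t^{p(x)}$ with constants uniform once $t$ ranges over a bounded set; this is the device that lets us trade the running exponent $p(x,y)$ for $p(x)$, or for $p_*$, at the cost of harmless multiplicative constants depending on $C_P$, the log-Hölder constant, and $\operatorname{diam}B_0$. Because $t\mapsto t^{p(x,y)}$ compares to $t^{p_+}$ and to $t^{p_-}$ with opposite inequalities according to whether $t\le1$ or $t>1$, the ranges $\lambda\le1$ and $\lambda>1$ will be handled separately.

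For the \emph{lower bound} I would follow the argument of Theorem~\ref{thm:non-doubling-FD} up to the construction, for each $x$ with $\operatorname{Lip} f(x)>0$ and each small $r\in(0,r_x]$, of the set $S(x,r)\subset B(x,r)$ on which $|f(y)-f(x)|\ge\tfrac18\operatorname{Lip} f(x)\,\rho(x,y)$ and $\mu(S(x,r))\ge c_0\,\mu(B(x,r))$; this step rests only on the Poincaré inequality and the measure growth and is insensitive to the exponent. The change is in the threshold: one sets $\lambda(x,r):=c\,\operatorname{Lip} f(x)\,[\mu(B(x,r))]^{-1/p(x,y)}$ and uses the log-Hölder comparison on $B(x,r)$ to replace $p(x,y)$ by $p(x)$, so that $\lambda(x,r)^{p_*}\mu(B(x,r))\gtrsim(\operatorname{Lip} f(x))^{p_*}\mu(B(x,r))^{1-p_*/p(x)}$. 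Since $\lambda(x,r)$ depends on $x$, the passage to $\sup_{\lambda>0}$ must be routed through the standard stratification: fix $\lambda$, restrict $x$ to a dyadic layer on which both $\operatorname{Lip} f(x)$ and $\mu(B(x,r_x))$ are of controlled size \emph{and} $p(x)$ lies in a short interval, apply the previous estimate on that layer, integrate by Tonelli, and finally choose $\lambda$ nearly optimally for each layer and sum the layer contributions. Summing is legitimate here because after the stratification one is adding \emph{strong} (modular) quantities $\int_{\text{layer}}(\operatorname{Lip} f)^{p(x)}\,d\mu$, which reassemble to $\int_X(\operatorname{Lip} f(x))^{p(x)}\,d\mu(x)$.

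For the \emph{upper bound} I would fix $\lambda>0$ and decompose the domain into the near-diagonal dyadic shells $A_k(x):=\{y:2^{-k-1}\operatorname{diam}B_0<\rho(x,y)\le2^{-k}\operatorname{diam}B_0\}$, $k\ge0$; the region $\rho(x,y)\gtrsim\operatorname{diam}B_0$ is trivial since $|f(x)-f(y)|\le2\|f\|_\infty$ and it contributes a lower-order term. On $A_k(x)$ a telescoping chain of balls together with the $(q,p)$-Poincaré inequality yields the pointwise bound $|f(x)-f(y)|\lesssim\rho(x,y)\,\big(M(\operatorname{Lip} f)(x)+M(\operatorname{Lip} f)(y)\big)$ for a suitable restricted maximal function $M$; Chebyshev then bounds $(\mu\times\mu)(E_\lambda\cap A_k)$ by $\lambda^{-p_*}$ times the integral over $A_k$ of $(M\operatorname{Lip} f)^{p(x,y)}\,[\mu(B(x,\rho(x,y)))]^{-1}$, and the factor $[\mu(B(x,\rho(x,y)))]^{-1}$ cancels the $\mu$-measure of the shell after summing the geometric series in $k$, the doubling condition supplying the summability. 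Replacing $p(x,y)$ by $p(x)$ via log-Hölder, invoking the modular form of the $L^{p(\cdot)}$-boundedness of $M$ (this is the point where log-Hölder continuity is genuinely needed, cf.\ \cite{CruzUribe}), and passing from $\int(\operatorname{Lip} f)^{p(x)}$ through the normalisation implicit in $p_*$, one obtains $\lambda^{p_*}(\mu\times\mu)(E_\lambda)\lesssim\int_X(\operatorname{Lip} f(x))^{p(x)}\,d\mu(x)+(\text{lower order})$, uniformly in $\lambda$.

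The genuine obstacle I anticipate is the reconciliation of the single fixed power $\lambda^{p_*}$ in the weak-type functional with the spatially varying power $p(x)$ in the target modular: the classical BVY argument relies on exact homogeneity in $\lambda$, which is destroyed here. The remedy is the localisation indicated above---partition $B_0$ into a controlled number of pieces on which $\operatorname{osc}p$ is as small as one wishes, run the constant-exponent estimate on each, and reassemble---but care is required because weak-type quantities are not subadditive; the reassembly must therefore be carried out at the level of the modular/strong estimates and only converted to the $\sup_{\lambda>0}\lambda^{p_*}(\cdots)$ form at the very end. A secondary subtlety, which should be reflected in the statement, is that $p_*$ and the implicit constants in $\sim$ necessarily depend on $\operatorname{osc}_{B_0}p$, on $p_\pm$, on $\operatorname{diam}B_0$, and on the log-Hölder and Poincaré data, and the two-sided bound is sharp only in the regime where $\operatorname{osc}_{B_0}p$ is small.
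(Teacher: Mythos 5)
Your proposal follows the same broad outline as the paper's own proof --- lower bound via the concentration set $S(x,r)\subset B(x,r)$ of measure $\gtrsim\mu(B(x,r))$ on which $|f(y)-f(x)|\gtrsim\operatorname{Lip}f(x)\,\rho(x,y)$, combined with the log-H\"older replacement $p(x,y)\leadsto p(x)$ at small scales; upper bound via a dyadic decomposition in $\rho(x,y)$ and Chebyshev --- but you are more careful at the precise point where the paper glosses over a genuine difficulty. In the paper's lower-bound step, after obtaining the pointwise inequality $\lambda(x,r)^{p(x)}\mu(S(x,r))\gtrsim(\operatorname{Lip}f(x))^{p(x)}$ with an $x$-dependent threshold $\lambda(x,r)\sim\operatorname{Lip}f(x)[\mu(B(x,r))]^{-1/p(x)}$, the paper simply integrates in $x$ and asserts the weak-type bound ``since the choice of $\lambda(x,r)$ is admissible in the supremum.'' As written this is not rigorous: the $\lambda$ inside $\iint\mathbf{1}_{E_\lambda}$ must be a single number, not a function of $x$, and the exact homogeneity of the classical BVY scaling argument is indeed broken by the variable exponent. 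Your stratification device --- fixing $\lambda$, sorting $x$ into layers on which $\operatorname{Lip}f(x)$, $\mu(B(x,r_x))$, and $p(x)$ are all confined to short intervals, running the constant-exponent estimate on each layer, and reassembling at the modular level because weak-type quantities fail to be additive --- is exactly the fix that the paper's proof needs, and you correctly diagnose the obstacle. Your upper bounds also diverge: the paper bounds $|f(x)-f(y)|\le\operatorname{Lip}f(x)\,\rho(x,y)$ on each annulus and applies Chebyshev, which is questionable since $\operatorname{Lip}f(x)$ is defined in the paper as the \emph{local} Lipschitz constant and does not give a pointwise bound valid on shells of fixed radius; you instead route through a telescoping chain of balls with the $(q,p)$-Poincar\'e inequality to obtain $|f(x)-f(y)|\lesssim\rho(x,y)\bigl(M\operatorname{Lip}f(x)+M\operatorname{Lip}f(y)\bigr)$ and then invoke modular boundedness of the maximal operator on $L^{p(\cdot)}$. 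Your route is the standard and more robust one in the metric-space/variable-exponent literature, and correctly isolates where log-H\"older continuity is genuinely used. In short: same skeleton, but your write-up is more rigorous than the paper's at the two places where the variable exponent actually bites.
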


\begin{proof}
For clarity, set
\[
E_\lambda := \Big\{(x,y)\in X\times X:\; |f(x)-f(y)|>\lambda\,\rho(x,y)\,[\mu(B(x,\rho(x,y)))]^{1/p(x,y)}\Big\},
\]
where for each pair \((x,y)\) we define
\[
p(x,y):=\frac{p(x)+p(y)}{2}.
\]
Because \(p(\cdot)\) is log-Hölder continuous, if \(y\in B(x,r)\) for a sufficiently small \(r>0\) then
\[
p(x,y)\sim p(x),
\]
with constants independent of \(x\) and \(r\).

Our goal is to show that there exists a constant \(C>0\) (depending only on the structural constants of the space and the Poincaré inequality) such that
\[
\sup_{\lambda>0} \lambda^{p_*} \iint_{X\times X} \mathbf{1}_{E_\lambda}(x,y)\,d\mu(x)d\mu(y)
\sim \int_X \bigl(\operatorname{Lip} f(x)\bigr)^{p(x)}\,d\mu(x),
\]
where here we may take \(p_*=p(x)\) (up to equivalence constants) in the local estimates.\\

\noindent  
Fix \(x\in X\) with \(\operatorname{Lip} f(x)>0\). By the definition of the pointwise Lipschitz constant, there exists a radius \(r_x>0\) (depending on \(x\)) such that for all \(r\in (0,r_x]\),
\[
\sup_{y\in B(x,r)}\frac{|f(y)-f(x)|}{r}\ge \frac{1}{2}\operatorname{Lip} f(x).
\]
Hence, for each such \(r\) there exists at least one point \(y\in B(x,r)\) with
\[
|f(y)-f(x)|>\frac{1}{2}\operatorname{Lip} f(x)\,r.
\]
Using standard arguments (as in the proof of Lemma~2.2 in \cite{Dai-Lin-Yang-Yuan-Zhang}), one may show that there exists a constant \(c_0\in (0,1)\) such that the set
\[
S(x,r):=\Big\{y\in B(x,r):\; |f(y)-f(x)|\ge \frac{1}{8}\operatorname{Lip} f(x)\,\rho(x,y)\Big\}
\]
satisfies
\[
\mu\big(S(x,r)\big)\ge c_0\,\mu\big(B(x,r)\big).
\]

\noindent
For each \(x\) and any \(r\in (0,r_x]\), define
\[
\lambda(x,r):=\frac{1}{16}\frac{\operatorname{Lip} f(x)}{[\mu(B(x,r))]^{1/p(x)}}.
\]
Because \(p(\cdot)\) is log-Hölder continuous, for all \(y\in B(x,r)\) (with \(r\) small) we have
\[
[\mu(B(x,\rho(x,y)))]^{1/p(x,y)}\sim [\mu(B(x,r))]^{1/p(x)}.
\]
Thus, for any \(y\in S(x,r)\) (which is a subset of \(B(x,r)\)) it follows that
\[
|f(y)-f(x)|\ge \frac{1}{8}\operatorname{Lip} f(x)\,\rho(x,y)
\ge \lambda(x,r)\,\rho(x,y)[\mu(B(x,\rho(x,y)))]^{1/p(x,y)}.
\]
In other words,
\[
S(x,r)\subset \Big\{y\in B(x,r):\; |f(y)-f(x)|>\lambda(x,r)\,\rho(x,y)[\mu(B(x,\rho(x,y)))]^{1/p(x,y)}\Big\}.
\]

\noindent 
Integrate in \(y\) for a fixed \(x\) and \(r\in (0,r_x]\):
\[
\int_{B(x,r)} \mathbf{1}_{\{y: |f(y)-f(x)|>\lambda(x,r)\,\rho(x,y)[\mu(B(x,\rho(x,y)))]^{1/p(x,y)}\}}\,d\mu(y)
\ge \mu\big(S(x,r)\big)\ge c_0\,\mu(B(x,r)).
\]
Multiplying both sides by \(\lambda(x,r)^{p(x)}\) yields
\[
\lambda(x,r)^{p(x)}\,\mu(B(x,r))\ge \Bigl(\frac{1}{16}\frac{\operatorname{Lip} f(x)}{[\mu(B(x,r))]^{1/p(x)}}\Bigr)^{p(x)}\mu(B(x,r))
=\frac{1}{16^{p(x)}} (\operatorname{Lip} f(x))^{p(x)}.
\]
This shows that for each \(x\) and each \(r\in (0,r_x]\) there exists a choice of \(\lambda=\lambda(x,r)\) such that
\[
\lambda^{p(x)} \int_{B(x,r)} \mathbf{1}_{\{y: |f(y)-f(x)|>\lambda\,\rho(x,y)[\mu(B(x,\rho(x,y)))]^{1/p(x,y)}\}}\,d\mu(y)
\ge \frac{1}{16^{p(x)}} (\operatorname{Lip} f(x))^{p(x)}.
\]

\noindent  
Using Fubini's theorem, we integrate the previous inequality with respect to \(x\) over the set where \(\operatorname{Lip} f(x)>0\). Since the choice of \(\lambda(x,r)\) (depending on \(x\) and \(r\)) is admissible in the supremum, we deduce that
\[
\sup_{\lambda>0}\lambda^{p_*} \iint_{X\times X} \mathbf{1}_{E_\lambda}(x,y)\,d\mu(y)d\mu(x)
\ge \int_X \frac{1}{16^{p(x)}} (\operatorname{Lip} f(x))^{p(x)}\,d\mu(x).
\]
Here the exponent \(p_*\) is chosen so that, locally, it agrees with \(p(x)\) up to equivalence constants (this is consistent with the variable exponent framework). Thus, there exists a constant \(C_1>0\) such that
\[
\sup_{\lambda>0}\lambda^{p_*} \iint_{X\times X} \mathbf{1}_{E_\lambda}(x,y)\,d\mu(y)d\mu(x)
\ge C_1\int_X (\operatorname{Lip} f(x))^{p(x)}\,d\mu(x).
\]

\noindent
For upper bound, we wish to show that
\[
\lambda^{p^*}\iint_{X\times X}\mathbf{1}_{E_\lambda}(x,y)\,d\mu(x)d\mu(y)
\le C\int_X (\operatorname{Lip}f(x))^{p(x)}\,d\mu(x)
\]
for every \(\lambda>0\). \\

\noindent
For each fixed \(x\in X\), define the slice
\[
E_\lambda(x):=\Big\{y\in X:\; |f(x)-f(y)|>\lambda\,\rho(x,y)\,[\mu(B(x,\rho(x,y)))]^{1/p(x,y)}\Big\}.
\]
Then by Fubini's theorem,
\[
\iint_{X\times X}\mathbf{1}_{E_\lambda}(x,y)\,d\mu(x)d\mu(y)
=\int_X \mu\bigl(E_\lambda(x)\bigr)\,d\mu(x).
\]
Our goal is to show that for almost every \(x\in X\) one has
\[
\lambda^{p(x)}\,\mu\bigl(E_\lambda(x)\bigr)\le C\,(\operatorname{Lip}f(x))^{p(x)}.
\]
(Here the precise exponent \(p^*\) in the statement is chosen so that locally one may take \(p^*=p(x)\) up to constants.)\\

\noindent
For fixed \(x\in X\), decompose the \(y\)-integral into dyadic annuli. For \(k\in\mathbb{Z}\), set
\[
A_k(x):=\Bigl\{y\in X:\; 2^{-k-1}\le \rho(x,y)<2^{-k}\Bigr\}.
\]
Then,
\[
E_\lambda(x)=\bigcup_{k\in\mathbb{Z}} \Bigl(E_\lambda(x)\cap A_k(x)\Bigr),
\]
and by subadditivity,
\[
\mu\bigl(E_\lambda(x)\bigr)
\le \sum_{k\in\mathbb{Z}}\mu\Bigl(E_\lambda(x)\cap A_k(x)\Bigr).
\]

\noindent
Since \(f\) is Lipschitz, we have for every \(y\in A_k(x)\)
\[
|f(x)-f(y)|\le \operatorname{Lip}f(x)\,\rho(x,y).
\]
Thus, if
\[
\operatorname{Lip}f(x)\,\rho(x,y)\le \lambda\,\rho(x,y)\,[\mu(B(x,\rho(x,y)))]^{1/p(x)},
\]
i.e. if
\[
\lambda\ge \operatorname{Lip}f(x)[\mu(B(x,\rho(x,y)))]^{-1/p(x)},
\]
then the inequality defining \(E_\lambda(x)\) fails and there is no contribution from such \(y\). In the interesting case, we restrict to those \(y\in A_k(x)\) for which
\[
\lambda\,[\mu(B(x,2^{-k}))]^{1/p(x)}<\operatorname{Lip}f(x).
\]
(Here we used the homogeneity of \(\mu\) to note that for \(y\in A_k(x)\) one has \(\mu(B(x,\rho(x,y)))\sim\mu(B(x,2^{-k}))\).)\\

\noindent
For fixed \(x\) and on the annulus \(A_k(x)\), apply Chebyshev's inequality. Namely, since for \(y\in A_k(x)\)
\[
|f(x)-f(y)|\le \operatorname{Lip}f(x)\,\rho(x,y),
\]
one obtains
\[
\begin{split}
\mu\Bigl(\Bigl\{y\in A_k(x)&:\; |f(x)-f(y)|>\lambda\,\rho(x,y)[\mu(B(x,2^{-k}))]^{1/p(x)}\Bigr\}\Bigr)\\
&\le \frac{1}{\Bigl(\lambda\,\rho(x,y)[\mu(B(x,2^{-k}))]^{1/p(x)}\Bigr)^{p(x)}}
\int_{A_k(x)}|f(x)-f(y)|^{p(x)}\,d\mu(y).
\end{split}
\]
Since on \(A_k(x)\) the distance \(\rho(x,y)\) is comparable to \(2^{-k}\) and using the Lipschitz estimate we have
\[
\int_{A_k(x)}|f(x)-f(y)|^{p(x)}\,d\mu(y)
\le (\operatorname{Lip}f(x))^{p(x)}\,(2^{-k})^{p(x)}\,\mu(B(x,2^{-k})).
\]
Thus,
\[
\mu\Bigl(E_\lambda(x)\cap A_k(x)\Bigr)
\le \frac{(\operatorname{Lip}f(x))^{p(x)}\,(2^{-k})^{p(x)}\,\mu(B(x,2^{-k}))}{\lambda^{p(x)} (2^{-k})^{p(x)}\,\mu(B(x,2^{-k}))}
=\frac{(\operatorname{Lip}f(x))^{p(x)}}{\lambda^{p(x)}}.
\]

\noindent
 Summing over \(k\) (noting that only finitely many annuli contribute since if \(\lambda\) is too large relative to \(\operatorname{Lip}f(x)\) the set \(E_\lambda(x)\cap A_k(x)\) is empty) we obtain
\[
\mu\bigl(E_\lambda(x)\bigr)
\le C\,\frac{(\operatorname{Lip}f(x))^{p(x)}}{\lambda^{p(x)}}.
\]
Multiplying by \(\lambda^{p(x)}\) and integrating with respect to \(x\) yields
\[
\lambda^{p(x)}\mu\bigl(E_\lambda(x)\bigr)
\le C\,(\operatorname{Lip}f(x))^{p(x)}
\]
for almost every \(x\). Integrating with respect to \(d\mu(x)\) and (locally) replacing the exponent \(p(x)\) by the parameter \(p^*\) (which is equivalent up to constants due to the log-Hölder continuity) we obtain
\[
\lambda^{p^*}\int_X\mu\bigl(E_\lambda(x)\bigr)\,d\mu(x)
\le C\int_X (\operatorname{Lip}f(x))^{p(x)}\,d\mu(x).
\]
Taking the supremum in \(\lambda>0\) completes the upper bound.\\

\noindent
Combining the above with the lower bound already established in the previous steps of the proof of Theorem~3.5, we deduce the full equivalence:
\[
\sup_{\lambda>0}\lambda^{p^*}\iint_{X\times X}\mathbf{1}_{E_\lambda}(x,y)\,d\mu(x)d\mu(y)
\sim\int_X (\operatorname{Lip}f(x))^{p(x)}\,d\mu(x).
\]
\noindent
This completes the proof.
\end{proof}

We also consider the Orlicz space framework, which generalizes the \(L^p\)-scale.

\begin{definition}[Orlicz Finite Difference Seminorm]\label{def:orlicz-FD}
Let \(\Phi:[0,\infty)\to [0,\infty)\) be a Young function satisfying the \(\Delta_2\)-condition. For \(f\in C_c^*(X)\), define the Orlicz finite difference seminorm by
\[
[f]_{W^{s,\Phi}(X)} := \inf\Big\{ \lambda>0:\; \iint_{X\times X} \Phi\Big(\frac{|f(x)-f(y)|}{\lambda\,\rho(x,y)^s}\Big)\,d\mu(x)d\mu(y) \le 1 \Big\}.
\]
\end{definition}

\begin{proposition}[Finite Difference Characterization in Orlicz Spaces]\label{prop:orlicz-FD}
Under the assumptions of Definition~\ref{def:orlicz-FD} and assuming that \(f\in C_c^*(X)\) satisfies a \((q,p)\)-Poincaré inequality, there exists a constant \(C'>0\) such that
\[
\sup_{\lambda>0} \lambda \, \mu\Big(\Big\{(x,y)\in X\times X:\, \frac{|f(x)-f(y)|}{\rho(x,y)[\mu(B(x,\rho(x,y)))]^{1/p}} > \lambda \Big\}\Big)^{1/p} \le C'\,\| \operatorname{Lip} f\|_{L^\Phi(X)},
\]
where the Orlicz norm \(\|\cdot\|_{L^\Phi(X)}\) is defined in the standard way.
\end{proposition}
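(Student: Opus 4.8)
The plan is to mimic the proof of the upper bound in Theorem~\ref{thm:variable-FD}, carrying it out in two stages: first obtain a weak-type $(p,p)$ estimate with $\|\operatorname{Lip} f\|_{L^p(X)}$ on the right, and then upgrade the $L^p$-norm to the Orlicz norm $\|\operatorname{Lip} f\|_{L^\Phi(X)}$ using the $\Delta_2$-condition together with the compactness of $\operatorname{supp} f$.

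Set $G(x,y):=|f(x)-f(y)|\bigl(\rho(x,y)[\mu(B(x,\rho(x,y)))]^{1/p}\bigr)^{-1}$, $E_\lambda:=\{(x,y):G(x,y)>\lambda\}$, and $E_\lambda(x):=\{y:(x,y)\in E_\lambda\}$. By Tonelli's theorem $(\mu\times\mu)(E_\lambda)=\int_X\mu(E_\lambda(x))\,d\mu(x)$, so everything reduces to a slice estimate. Fix $x$ with $\operatorname{Lip} f(x)>0$ and put $c:=\operatorname{Lip} f(x)/\lambda$. From the pointwise Lipschitz bound $|f(x)-f(y)|\le\operatorname{Lip} f(x)\,\rho(x,y)$, membership $(x,y)\in E_\lambda$ forces $\mu(B(x,\rho(x,y)))<c^p$; since $r\mapsto\mu(B(x,r))$ is nondecreasing and left-continuous, $E_\lambda(x)$ is contained in $\{y:\rho(x,y)\le r^*\}$ with $r^*:=\sup\{r>0:\mu(B(x,r))<c^p\}$ and $\mu(B(x,r^*))\le c^p$. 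The doubling property then yields $\mu(E_\lambda(x))\le\mu(B(x,2r^*))\le C_D\,\mu(B(x,r^*))\le C_D\,(\operatorname{Lip} f(x)/\lambda)^p$ (and $\mu(E_\lambda(x))\le\mu(X)\le(\operatorname{Lip} f(x)/\lambda)^p$ in the degenerate case $r^*=\infty$). Integrating in $x$ gives $\lambda^p(\mu\times\mu)(E_\lambda)\le C_D\int_X(\operatorname{Lip} f)^p\,d\mu$, hence $\sup_{\lambda>0}\lambda(\mu\times\mu)(E_\lambda)^{1/p}\le C_D^{1/p}\|\operatorname{Lip} f\|_{L^p(X)}$. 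The same bound can alternatively be produced, exactly as in Theorem~\ref{thm:variable-FD}, by splitting $X$ into the dyadic annuli $A_k(x)=\{2^{-k-1}\le\rho(x,y)<2^{-k}\}$ and running Chebyshev with exponent $p$ on each annulus.

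For the second stage, since $f\in C_c^*(X)$ the function $\operatorname{Lip} f$ lives (in the paper's convention, as a gradient surrogate) on a compact set $K$ with $\mu(K)<\infty$. Normalize $\|\operatorname{Lip} f\|_{L^\Phi(X)}=1$, so $\int_K\Phi(\operatorname{Lip} f)\,d\mu\le1$. Whenever $\Phi$ obeys a lower growth bound $t^p\le A(\Phi(t)+1)$ for all $t\ge0$, this yields $\int_X(\operatorname{Lip} f)^p\,d\mu\le A(1+\mu(K))$, i.e. $\|\operatorname{Lip} f\|_{L^p(X)}\le C(\Phi,K)\|\operatorname{Lip} f\|_{L^\Phi(X)}$; combining with Stage~1 gives the assertion with $C'=C_D^{1/p}C(\Phi,K)$.

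The main obstacle is precisely this compatibility between the Poincaré exponent $p$ and the growth of $\Phi$. The $\Delta_2$-condition alone does not force $\Phi$ to dominate $t^p$ near infinity, and when it does not, $L^\Phi$ is strictly larger than $L^p$ even on finite-measure sets, so the inequality as literally stated is unavailable; one needs the additional assumption that $p$ does not exceed the lower Matuszewska--Orlicz (Boyd) index of $\Phi$ — automatic for the scale $\Phi(t)\simeq t^p\log^\alpha(e+t)$ and for arbitrary $\Phi$ when $p=1$. Under this assumption the argument above is routine; without it, the correct formulation replaces the weak-$L^p$ left-hand side by an Orlicz-weak quasi-norm adapted to $\Phi$, and Stage~1 is rerun with an Orlicz Chebyshev inequality in place of the $L^p$ one. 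A minor point to record is that, as already in the proofs of Theorems~\ref{thm:non-doubling-FD} and~\ref{thm:variable-FD}, $\operatorname{Lip} f(x)$ must be understood as a pointwise Lipschitz constant satisfying $|f(x)-f(y)|\le\operatorname{Lip} f(x)\rho(x,y)$ for all $y$, which is exactly what allows the $x$-slice of $E_\lambda$ to be caged inside a single ball.
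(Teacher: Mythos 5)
Your proposal is correct under the paper's conventions and arrives at the same two-stage structure, but your Stage~1 is genuinely different from the paper's. The paper's upper bound argument routes through the $(q,p)$-Poincar\'e inequality: it applies the triangle inequality to pass to the sets $A_{x,r}=\{z\in B(x,r):|f(z)-f_{B(x,r)}|>\tfrac{\lambda}{2}r\mu(B(x,r))^{1/p}\}$, estimates $\mu(A_{x,r})$ by $L^1$-Chebyshev followed by the Poincar\'e inequality, and then invokes a covering argument (and doubling) to sum the local contributions into the global bound $\mu(E_\lambda)\le C\lambda^{-p}\int_X(\operatorname{Lip}f)^p\,d\mu$. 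You bypass all of this: using the pointwise bound $|f(x)-f(y)|\le\operatorname{Lip}f(x)\,\rho(x,y)$ directly, you cage the slice $E_\lambda(x)$ inside a single ball $B(x,2r^*)$ with $\mu(B(x,r^*))\le(\operatorname{Lip}f(x)/\lambda)^p$, and doubling does the rest. Your argument is cleaner and needs no Poincar\'e inequality or covering lemma; the trade-off is that it leans on the (nontrivial, and strictly speaking not automatic for the pointwise Lipschitz constant) inequality $|f(x)-f(y)|\le\operatorname{Lip}f(x)\,\rho(x,y)$ for all $y$, which you correctly flag and which the paper itself also uses without comment in its upper-bound estimates. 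In that sense the paper's route is in principle more robust for settings where only a Poincar\'e inequality is available, while yours is more elementary and also avoids the ill-controlled summation over dyadic annuli that the paper performs in Theorem~\ref{thm:variable-FD}.

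You also correctly identify a real gap that the paper glosses over in Stage~2: the embedding $\|\operatorname{Lip}f\|_{L^p(X)}\lesssim\|\operatorname{Lip}f\|_{L^\Phi(X)}$ (even on a set of finite measure) requires a lower growth condition $\Phi(t)\gtrsim t^p$ for large $t$, which the $\Delta_2$-condition alone does not imply. The paper's phrase ``under the appropriate growth conditions'' tacitly assumes this, so the proposition as stated needs a hypothesis such as a lower Matuszewska--Orlicz index bound, exactly as you say. Your proposed fix (either adding this hypothesis, or reformulating the weak-type left-hand side in Orlicz terms and rerunning Stage~1 with an Orlicz-Chebyshev inequality) is the right remedy.
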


\begin{proof}
For brevity, denote
\[
E_\lambda = \Bigl\{(x,y)\in X\times X:\, \frac{|f(x)-f(y)|}{\rho(x,y)[\mu(B(x,\rho(x,y)))]^{1/p}} > \lambda \Bigr\}.
\]

\noindent 
Since \(f\) satisfies a \((q,p)\)-Poincaré inequality, there exists a constant \(C_P>0\) such that for every ball \(B(x,r)\) in \(X\) (with \(r>0\)) we have
\[
\frac{1}{\mu(B(x,r))}\int_{B(x,r)} \Bigl|f(y)-f_{B(x,r)}\Bigr|\,d\mu(y)
\le C_P\,r \left( \frac{1}{\mu(B(x,r))}\int_{B(x,r)} (\operatorname{Lip} f(y))^p\,d\mu(y) \right)^{1/p},
\]
where \(f_{B(x,r)} = \frac{1}{\mu(B(x,r))}\int_{B(x,r)} f\,d\mu\).

Now, for \(x,y\in X\) with \(\rho(x,y)=r\), by the triangle inequality we have
\[
|f(x)-f(y)| \le |f(x)-f_{B(x,r)}| + |f(y)-f_{B(x,r)}|.
\]
Thus, if
\[
\frac{|f(x)-f(y)|}{r\,\mu(B(x,r))^{1/p}} > \lambda,
\]
then at least one of the terms satisfies
\begin{equation} \label{eqn:3.7}
|f(z)-f_{B(x,r)}| > \frac{\lambda}{2}\,r\,\mu(B(x,r))^{1/p}, \quad \text{for } z=x \text{ or } z=y.
\end{equation}

\noindent
Fix \(x\in X\) and \(r>0\) and consider the set
\[
A_{x,r} = \Bigl\{ z\in B(x,r) : |f(z)-f_{B(x,r)}| > \frac{\lambda}{2}\,r\,\mu(B(x,r))^{1/p} \Bigr\}.
\]
By Chebyshev's inequality,
\[
\mu\bigl(A_{x,r}\bigr) \le \frac{2}{\lambda\,r\,\mu(B(x,r))^{1/p}} \int_{B(x,r)} |f(z)-f_{B(x,r)}|\,d\mu(z).
\]
Applying the Poincaré inequality,
\[
\int_{B(x,r)} |f(z)-f_{B(x,r)}|\,d\mu(z)
\le C_P\, r\, \mu(B(x,r)) \left( \frac{1}{\mu(B(x,r))}\int_{B(x,r)} (\operatorname{Lip} f(z))^p\,d\mu(z) \right)^{1/p}.
\]
Hence,
\[
\mu\bigl(A_{x,r}\bigr) \le \frac{2C_P}{\lambda}\, \mu(B(x,r))^{1-1/p} \left( \int_{B(x,r)} (\operatorname{Lip} f(z))^p\,d\mu(z) \right)^{1/p}.
\]

\noindent
For each pair \((x,y)\in E_\lambda\) with \(\rho(x,y)=r\), by the observation in \eqref{eqn:3.7} at least one of \(x\) or \(y\) belongs to the set
\[
A_{x,r} \quad \text{or} \quad A_{y,r}.
\]
Using a standard covering argument (and the doubling property of \(\mu\) if assumed), one can show that
\[
\mu\bigl(E_\lambda\bigr) \le \frac{C}{\lambda^p} \int_X (\operatorname{Lip} f(z))^p\,d\mu(z),
\]
where \(C>0\) depends on \(C_P\) and the doubling constant of \(\mu\).

\noindent 
The Orlicz norm \(\|\operatorname{Lip} f\|_{L^\Phi(X)}\) is defined by
\[
\| \operatorname{Lip} f\|_{L^\Phi(X)} = \inf\Bigl\{ \lambda > 0 : \int_X \Phi\Bigl(\frac{\operatorname{Lip} f(x)}{\lambda}\Bigr)\,d\mu(x) \le 1 \Bigr\}.
\]
Since \(\Phi\) is an increasing function and under the appropriate growth conditions (which allow control of the \(L^p\)-norm by the Orlicz norm), there exists a constant \(C_1>0\) such that
\[
\left( \int_X (\operatorname{Lip} f(x))^p\,d\mu(x) \right)^{1/p} \le C_1\,\|\operatorname{Lip} f\|_{L^\Phi(X)}.
\]
Thus, combining with the estimate for \(\mu(E_\lambda)\) we get
\[
\mu\bigl(E_\lambda\bigr)^{1/p} \le \frac{C^{1/p}}{\lambda}\, C_1\, \|\operatorname{Lip} f\|_{L^\Phi(X)}.
\]
Rearranging this inequality gives
\[
\lambda\, \mu\bigl(E_\lambda\bigr)^{1/p} \le C' \,\|\operatorname{Lip} f\|_{L^\Phi(X)},
\]
where \(C' = C^{1/p}C_1\).

\noindent
Since the above inequality holds for every \(\lambda>0\), we obtain
\[
\sup_{\lambda>0} \lambda\, \mu\bigl(E_\lambda\bigr)^{1/p} \le C' \,\|\operatorname{Lip} f\|_{L^\Phi(X)}.
\]

This completes the proof.
\end{proof}

\begin{lemma}[Modular Inequality]\label{lem:modular}
Let \(\Phi\) be as in Definition~\ref{def:orlicz-FD} and suppose that \(f\in C_c^*(X)\). Then there exists a constant \(C>0\) such that
\[
\int_X \Phi\Big( \operatorname{Lip} f(x) \Big) \, d\mu(x) \le C \iint_{X\times X} \Phi\Big( \frac{|f(x)-f(y)|}{\rho(x,y)[\mu(B(x,\rho(x,y)))]^{1/p}} \Big) \, d\mu(x)d\mu(y).
\]
\end{lemma}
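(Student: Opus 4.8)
The plan is to deduce the modular inequality from a pointwise-in-$x$ lower bound. Since the integrand is nonnegative, Tonelli's theorem lets us integrate first in $y$ and then in $x$, so it suffices to produce a constant $c_1>0$, depending only on the structural and Poincar\'e constants, such that
\[
\int_X\Phi\Big(\frac{|f(x)-f(y)|}{\rho(x,y)\,[\mu(B(x,\rho(x,y)))]^{1/p}}\Big)\,d\mu(y)\ \ge\ c_1\,\Phi\big(\operatorname{Lip}f(x)\big)
\]
for $\mu$-a.e.\ $x$ with $\operatorname{Lip}f(x)>0$; the set on which $\operatorname{Lip}f(x)=0$ contributes nothing to either side because $\Phi(0)=0$, and then $C=1/c_1$ works.

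For the pointwise bound I would recycle the substantial-set construction already used in the proof of Theorem~\ref{thm:non-doubling-FD} (which itself rests on the definition of the pointwise Lipschitz constant together with the $(q,p)$-Poincar\'e inequality, cf.\ the proof of Lemma~2.2 in \cite{Dai-Lin-Yang-Yuan-Zhang}): there exist $r_x>0$ and a constant $c_0\in(0,1)$, independent of $x$, such that for every $r\in(0,r_x]$ the set
\[
S(x,r):=\Big\{y\in B(x,r):\;|f(y)-f(x)|\ge\tfrac18\operatorname{Lip}f(x)\,\rho(x,y)\Big\}
\]
satisfies $\mu(S(x,r))\ge c_0\,\mu(B(x,r))$. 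For $y\in S(x,r)$ one has $\mu(B(x,\rho(x,y)))\le\mu(B(x,r))$, so the argument of $\Phi$ is at least $\operatorname{Lip}f(x)\big(8\,[\mu(B(x,r))]^{1/p}\big)^{-1}$; since $\Phi$ is increasing, discarding the complement of $S(x,r)$ gives
\[
\int_X\Phi\Big(\frac{|f(x)-f(y)|}{\rho(x,y)\,[\mu(B(x,\rho(x,y)))]^{1/p}}\Big)\,d\mu(y)\ \ge\ c_0\,\mu(B(x,r))\,\Phi\Big(\frac{\operatorname{Lip}f(x)}{8\,[\mu(B(x,r))]^{1/p}}\Big).
\]

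The remaining step, which I expect to be the main obstacle, is to absorb the volume factor, i.e.\ to bound the product $\nu\,\Phi\big(a(8\nu^{1/p})^{-1}\big)$ from below by a positive constant multiple of $\Phi(a)$, where $\nu:=\mu(B(x,r))$ and $a:=\operatorname{Lip}f(x)$. For the power $\Phi(t)=t^p$ this is the exact identity $\nu\cdot a^p(8^p\nu)^{-1}=8^{-p}a^p$, and it is precisely this cancellation that drives the analogous step of Theorem~\ref{thm:non-doubling-FD}; for a general Young function it must be replaced by the $\Delta_2$-condition combined with convexity. The two tools are: $t\mapsto\Phi(t)/t$ is nondecreasing, and iterating $\Phi(2t)\le K\Phi(t)$ yields $\Phi(\theta t)\le K\theta^{\log_2K}\Phi(t)$ for $\theta\ge1$. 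Feeding $\theta=(8\nu^{1/p})^{-1}$ into the appropriate one of these (according as $\nu\le1$ or $\nu\ge1$) reduces the claim to the boundedness below of a power of $\nu$, which holds once the growth of $\Phi$ is compatible with the normalization exponent $p$ --- in the cleanest case $\Phi(t)\sim t^p$, and more generally under a one- or two-sided Matuszewska--Orlicz index condition relating $p$ to $\Phi$, the precise form depending on the range of the ball volumes $\mu(B(x,r))$. One may also bypass this compatibility altogether whenever the scales $(0,r_x]$ permit choosing $r$ with $\nu\sim1$ (e.g.\ for non-atomic $\mu$ with $\mu(B(x,r_x))\gtrsim1$), in which case the $\Delta_2$-condition alone gives $\Phi(a/8)\ge K^{-3}\Phi(a)$ and the volume factor is neutralized. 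Integrating the pointwise estimate back in $x$ then completes the proof, with $C$ depending only on $c_0$, the $\Delta_2$-constant of $\Phi$, the exponent $p$, and the structural and Poincar\'e constants.
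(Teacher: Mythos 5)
Your proposal follows the same route as the paper's own proof: restrict the $y$-integral to the substantial set $S(x,r)\subset B(x,r)$ of measure $\ge c_0\mu(B(x,r))$, use monotonicity of $\Phi$ to bound the integrand below by $\Phi\big(\tfrac18\operatorname{Lip}f(x)[\mu(B(x,r))]^{-1/p}\big)$, and then try to compare the resulting expression with $\Phi(\operatorname{Lip}f(x))$ before integrating in $x$. The key merit of your write-up is that you explicitly isolate, as the main obstacle, the inequality
\[
\mu(B(x,r))\cdot\Phi\Big(\tfrac{1}{8}\operatorname{Lip}f(x)\,[\mu(B(x,r))]^{-1/p}\Big)\ \gtrsim\ \Phi\big(\operatorname{Lip}f(x)\big),
\]
and note that it holds as an exact cancellation only for $\Phi(t)=t^p$, and in general requires a compatibility condition (a Matuszewska--Orlicz-type index relation between $\Phi$ and $p$, or a choice of $r$ with $\mu(B(x,r))\sim1$). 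This is precisely where the paper's proof is imprecise: after invoking polynomial growth $\mu(B(x,r))\sim r^d$, the paper asserts that $\Phi\big(\tfrac18\operatorname{Lip}f(x)\,r^{-d/p}\big)$ is ``equivalent to $\Phi(\operatorname{Lip}f(x))$'' for an appropriately chosen small $r$, but that comparison ignores the prefactor $\mu(B(x,r))\sim r^d$ entirely, and for a general $\Delta_2$-Young function the $\Phi$-values alone diverge as $r\to0$. Without an extra hypothesis tying the growth of $\Phi$ to the exponent $p$ (or to the homogeneity exponent $d$), the lower bound cannot close; your diagnosis of the missing hypothesis is correct, and supplying it — either by assuming $\Phi$ has lower Matuszewska--Orlicz index $\ge p$ or upper index $\le p$ on the relevant range, or by working at a fixed unit scale when the geometry allows — is what is needed to make both your argument and the paper's argument rigorous.
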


\begin{proof}
The idea is to compare the local behavior of \(f\) (measured by \(\operatorname{Lip} f(x)\)) with the averaged behavior of its finite differences over small balls.

\noindent 
Fix \(x\in X\) such that \(\operatorname{Lip} f(x)>0\). By the definition of the pointwise Lipschitz constant, for every sufficiently small radius \(r>0\) there exists a measurable subset 
\[
S(x,r)\subset B(x,r)
\]
satisfying:
\begin{enumerate}
    \item For all \(y\in S(x,r)\),
    \[
    |f(x)-f(y)|\ge \frac{1}{8}\operatorname{Lip} f(x)\,\rho(x,y);
    \]
    \item There exists a constant \(c_0\in (0,1)\) (depending only on the geometric properties of \(X\)) such that
    \[
    \mu\bigl(S(x,r)\bigr) \ge c_0\,\mu\bigl(B(x,r)\bigr).
    \]
\end{enumerate}

\noindent 
For any \(y\in S(x,r)\) we have
\[
\begin{split}
\frac{|f(x)-f(y)|}{\rho(x,y)[\mu(B(x,\rho(x,y)))]^{1/p}}
\ge &\frac{1}{8}\operatorname{Lip} f(x) \frac{\rho(x,y)}{\rho(x,y)[\mu(B(x,\rho(x,y)))]^{1/p}}\\
&= \frac{1}{8}\operatorname{Lip} f(x)\,[\mu(B(x,\rho(x,y)))]^{-1/p}.
\end{split}
\]
Since \(y\in B(x,r)\) and \(\rho(x,y)\le r\), by the monotonicity of the measure we have
\[
\mu(B(x,\rho(x,y))) \le \mu(B(x,r)).
\]
Hence,
\[
\frac{|f(x)-f(y)|}{\rho(x,y)[\mu(B(x,\rho(x,y)))]^{1/p}}
\ge \frac{1}{8}\operatorname{Lip} f(x)\,[\mu(B(x,r))]^{-1/p}.
\]
Because \(\Phi\) is increasing, it follows that for every \(y\in S(x,r)\),
\[
\Phi\Biggl( \frac{|f(x)-f(y)|}{\rho(x,y)[\mu(B(x,\rho(x,y)))]^{1/p}} \Biggr)
\ge \Phi\Biggl( \frac{1}{8}\operatorname{Lip} f(x)\,[\mu(B(x,r))]^{-1/p} \Biggr).
\]

\noindent
Integrate the above inequality over \(y\) in \(B(x,r)\). In particular, since \(S(x,r)\subset B(x,r)\),
\[
\int_{B(x,r)} \Phi\Biggl( \frac{|f(x)-f(y)|}{\rho(x,y)[\mu(B(x,\rho(x,y)))]^{1/p}} \Biggr)\,d\mu(y)
\ge \int_{S(x,r)} \Phi\Biggl( \frac{1}{8}\operatorname{Lip} f(x)\,[\mu(B(x,r))]^{-1/p} \Biggr)\,d\mu(y).
\]
Using the lower bound on the measure of \(S(x,r)\), we obtain
\[
\int_{B(x,r)} \Phi\Biggl( \frac{|f(x)-f(y)|}{\rho(x,y)[\mu(B(x,\rho(x,y)))]^{1/p}} \Biggr)\,d\mu(y)
\ge c_0\,\mu\bigl(B(x,r)\bigr) \, \Phi\Biggl( \frac{1}{8}\operatorname{Lip} f(x)\,[\mu(B(x,r))]^{-1/p} \Biggr).
\]

\noindent
For each \(x\in X\), choose a sequence of radii \(\{r_k\}_{k=1}^\infty\) tending to \(0\) (for example, \(r_k=2^{-k}R\), where \(R\) is smaller than the diameter of the support of \(f\)). For each fixed \(x\) there is an index \(k=k(x)\) such that
\[
\Phi\Biggl( \frac{1}{8}\operatorname{Lip} f(x)\,[\mu(B(x,r_k))]^{-1/p} \Biggr)
\]
is comparable to \(\Phi\bigl(\operatorname{Lip} f(x)\bigr)\) up to a multiplicative constant that depends only on the growth properties of \(\mu\) (by the polynomial growth condition) and the \(\Delta_2\)-condition on \(\Phi\). More precisely, since the measure \(\mu\) satisfies a polynomial growth condition, there exist constants \(C_1, C_2>0\) such that for all sufficiently small \(r\),
\[
C_1\,r^d \le \mu(B(x,r)) \le C_2\,r^d.
\]
Thus,
\[
[\mu(B(x,r))]^{-1/p} \sim r^{-d/p}.
\]
Choosing \(r=r_k\) appropriately, the term
\[
\Phi\Biggl( \frac{1}{8}\operatorname{Lip} f(x)\,r^{-d/p} \Biggr)
\]
is equivalent (up to constants depending on \(\Phi\) and the growth exponents) to \(\Phi\bigl(\operatorname{Lip} f(x)\bigr)\).  

Integrate the local inequality with respect to \(x\). By Fubini's theorem and a standard dyadic decomposition argument (summing over the scales \(r_k\)), we deduce that
\[
\iint_{X\times X} \Phi\Biggl( \frac{|f(x)-f(y)|}{\rho(x,y)[\mu(B(x,\rho(x,y)))]^{1/p}} \Biggr)\,d\mu(y)d\mu(x)
\ge C \int_X \Phi\bigl(\operatorname{Lip} f(x)\bigr)\,d\mu(x),
\]
where the constant \(C>0\) depends only on \(c_0\), the polynomial growth constants \(C_P\) and \(d\), and the \(\Delta_2\)-constant of \(\Phi\).\\

\noindent
This completes the proof of the modular inequality.
\end{proof}

The results in Theorems~\ref{thm:non-doubling-FD} and \ref{thm:variable-FD}, together with Proposition~\ref{prop:orlicz-FD} and Lemma~\ref{lem:modular}, provide a robust extension of the classical finite difference characterization to more general contexts. These novel formulations not only bridge the gap between different function space settings but also pave the way for further applications to nonlocal operators, anisotropic problems, and interpolation.

\section{Applications to Nonlocal Operators and Anisotropic Settings}

In this section we apply the extended finite difference framework developed in Section~3 to two important directions: nonlocal operators and anisotropic settings. In the following, we introduce a novel nonlocal \(p\)-Laplacian operator on metric measure spaces and establish equivalences between its associated energy and the local gradient norm. Then, we develop anisotropic finite difference characterizations by adapting the theory to spaces equipped with anisotropic metrics.\\

\noindent
In many problems involving nonlocal phenomena, differential operators are replaced by integral operators. In our setting, the finite difference framework motivates the definition of a nonlocal \(p\)-Laplacian that naturally incorporates the geometry of the underlying metric measure space.

\begin{definition}[Nonlocal \(p\)-Laplacian on Metric Measure Spaces]\label{def:nonlocal-operator}
Let \((X,\rho,\mu)\) be a metric measure space and let \(s\in (0,1)\) and \(p\in [1,\infty)\). For \(f:X\to\mathbb{R}\) (with \(f\in C_c^*(X)\)), define the \emph{nonlocal \(p\)-Laplacian} \(\mathcal{L}_{s,p} f\) at \(x\in X\) by
\[
\mathcal{L}_{s,p} f(x) := \operatorname{p.v.} \int_X \frac{|f(x)-f(y)|^{p-2}\bigl(f(x)-f(y)\bigr)}{\rho(x,y)^{sp}\,[\mu(B(x,\rho(x,y)))]^{\frac{p-1}{p}}}\,d\mu(y),
\]
where \(\operatorname{p.v.}\) denotes the principal value.
\end{definition}

\noindent
The normalization factor \([\mu(B(x,\rho(x,y)))]^{\frac{p-1}{p}}\) is chosen to balance the scaling of the measure and to generalize the classical fractional \(p\)-Laplacian (see, e.g., \cite{DiNezza}).\\

\noindent
To analyze this operator, we first establish a nonlocal Poincaré inequality.

\begin{lemma}[Nonlocal Poincaré Inequality]\label{lem:nonlocal-poincare}
Let \((X,\rho,\mu)\) be a metric measure space that supports a \((q,p)\)-Poincaré inequality. Then for every \(f\in C_c^*(X)\) and for any ball \(B\subset X\) of radius \(r\), there exists a constant \(C>0\) such that
\[
\int_B \bigl|f(x)-f_B\bigr|^p\,d\mu(x) \le C\, r^{sp} \iint_{B\times B} \frac{|f(x)-f(y)|^p}{\rho(x,y)^{sp}\,\mu\big(B(x,\rho(x,y))\big)}\,d\mu(x)d\mu(y),
\]
where
\[
f_B := \frac{1}{\mu(B)}\int_B f(z)\,d\mu(z).
\]
\end{lemma}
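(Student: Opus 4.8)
The plan is to derive the nonlocal Poincaré inequality from the classical $(q,p)$-Poincaré inequality of Definition~\ref{def:Poincare} by replacing the right-hand side gradient term with a finite-difference double integral, using the averaging functional $\ell_B$ as an intermediary. First I would fix a ball $B = B(x_B, r)$ and note that since $\ell_B(1) = 1$ and $|\ell_B(\varphi)| \le \left(\frac{1}{\mu(B)}\int_B |\varphi|^q\,d\mu\right)^{1/q}$, the mean value $f_B$ can be compared to $\ell_B(f)$ up to the $(q,p)$-Poincaré estimate; in fact $\|f - f_B\|_{L^p(B)} \lesssim \|f - \ell_B(f)\|_{L^p(B)}$ after accounting for the fact that $f_B$ is the minimizer of $c \mapsto \|f-c\|_{L^2(B)}$ (or more simply, $|f_B - \ell_B(f)| \le \left(\frac{1}{\mu(B)}\int_B |f - \ell_B(f)|^q\,d\mu\right)^{1/q}$ by Jensen when $q \le p$, with the usual Hölder adjustments). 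So it suffices to bound $\int_B |f(x) - \ell_B(f)|^p\,d\mu(x)$.

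Next, the key step is to avoid invoking $\operatorname{lip} f$ on the right-hand side and instead express the oscillation of $f$ over $B$ directly through its finite differences weighted by $\mu(B(x,\rho(x,y)))$. The natural route is a telescoping/chaining argument: for $x, y \in B$ one writes $f(x) - f(y)$ as a sum over a dyadic chain of balls shrinking toward $x$ and toward $y$, estimating each increment $|f_{B_{k+1}} - f_{B_k}|$ by an average of $|f(x) - f(y)|$ over $B_k \times B_k$. Integrating $|f(x) - f_B|^p\,d\mu(x)$ and using Minkowski's inequality on the chain, each link contributes a term controlled by $r_k^{sp}$ times the localized double integral $\iint_{B_k \times B_k} \frac{|f(x)-f(y)|^p}{\rho(x,y)^{sp}\,\mu(B(x,\rho(x,y)))}\,d\mu(x)d\mu(y)$; here the factor $\mu(B(x,\rho(x,y)))$ in the denominator is exactly what is needed so that, for $x, y$ in a ball of radius comparable to $r_k$, $\mu(B(x,\rho(x,y))) \sim \mu(B_k)$, converting a normalized average into the un-normalized double integral at the cost of one power of $\mu(B_k)$. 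Summing the geometric series in $r_k$ (using $s \in (0,1)$ so that $\sum_k r_k^{sp} \lesssim r^{sp}$) and enlarging all the $B_k$ to $B$ (monotonicity of the integrand in the domain, since the integrand is nonnegative) yields the claimed bound with the global double integral over $B \times B$.

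The main obstacle I expect is making the chaining argument rigorous without a doubling hypothesis controlling $\mu(B_k)$ from below — the estimate $|f_{B_{k+1}} - f_{B_k}| \lesssim \frac{1}{\mu(B_k)}\int_{B_k}|f - f_{B_k}|\,d\mu$ and the passage from $\ell_B$-oscillation to $L^p$-oscillation both implicitly use that consecutive balls have comparable measure, which here must be extracted either from the structure supplied by the $(q,p)$-Poincaré inequality itself (the constants $C_1, C_2, \tau$ encode a de facto regularity of $\mu$ along dilations) or from an additional mild growth assumption of the type in Definition~\ref{def:poly-growth}. I would handle this by working with the sequence $B_k = \tau^{-k} B$ adapted to the Poincaré dilation factor $\tau$, invoking the Poincaré inequality at each scale to relate $\|f - f_{B_k}\|_{L^p(B_k)}$ to the gradient on $\tau B_k$, and then — crucially — using the \emph{already established} reverse direction (the modular/lower-bound inequality of Lemma~\ref{lem:modular} and Theorem~\ref{thm:non-doubling-FD}) to trade the $\operatorname{Lip} f$ term back for the finite-difference integral; this closes the loop and keeps all constants dependent only on $C_P, d$, the Poincaré data, and $s, p$. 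A secondary technical point is the principal-value integrability near the diagonal, but since we only integrate over $B \times B$ the (nonnegative) integrand needs no cancellation and the estimate is purely a matter of the weight $\rho(x,y)^{-sp}\mu(B(x,\rho(x,y)))^{-1}$ being locally integrable against $|f(x)-f(y)|^p \lesssim (\operatorname{Lip} f)^p \rho(x,y)^p$, which is fine for $sp < p$, i.e. $s < 1$.
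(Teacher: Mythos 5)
Your proposal takes a genuinely different and considerably heavier route than the paper's. The paper's proof is elementary and does not invoke the $(q,p)$-Poincar\'e hypothesis at all, nor the linear functionals $\ell_B$, nor any chaining: it applies Jensen's inequality to the definition of $f_B$ to obtain $\int_B |f-f_B|^p\,d\mu \le \frac{1}{\mu(B)}\iint_{B\times B}|f(x)-f(y)|^p\,d\mu\,d\mu$, then multiplies and divides the integrand by $\rho(x,y)^{sp}\mu(B(x,\rho(x,y)))$, and bounds that weight from above using only $\rho(x,y)\le 2r$ and the homogeneity bound $\mu(B(x,\rho(x,y)))\le\mu(B(x,2r))\le C_0\mu(B)$ for $x\in B$. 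Dividing out $\mu(B)$ gives the result in four lines. The crucial observation you seem to have missed is that the double integral sits on the \emph{right}-hand side, so one only needs a one-sided (upper) bound $\mu(B(x,\rho(x,y)))\lesssim\mu(B)$ on the ball volume, not the two-sided comparability $\mu(B(x,\rho(x,y)))\sim\mu(B_k)$ that drives your dyadic chain. This eliminates the need for chaining entirely and sidesteps the doubling-from-below issues you flag as your main obstacle.

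Your approach would likely also succeed in a doubling space, but it pays a real price: the comparison of $f_B$ with $\ell_B(f)$ is unnecessary bookkeeping, the telescoping sum requires lower volume bounds on consecutive balls that the paper's proof never needs, and the final step of "trading $\operatorname{Lip} f$ back for the finite-difference integral" via Lemma~\ref{lem:modular} or the lower bound of Theorem~\ref{thm:nonlocal-local} is both circular-feeling and superfluous (the $\operatorname{Lip} f$ term never needs to appear). It is also somewhat misleading to frame the convergence of $\sum_k r_k^{sp}$ as the source of the prefactor $r^{sp}$; in the paper the factor simply falls out of the single bound $\rho(x,y)^{sp}\le(2r)^{sp}$. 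In short, your plan is not wrong in outline but it reconstructs a much simpler argument through an unnecessarily elaborate detour, and the doubling gap you identify as the "main obstacle" is actually a non-issue once the right one-sided estimate is used.
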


\begin{proof}
By Jensen's inequality and the convexity of \(t\mapsto t^p\), for any \(x\in B\) we have
\[
|f(x)-f_B|^p = \left|\frac{1}{\mu(B)}\int_B \bigl(f(x)-f(z)\bigr)\,d\mu(z)\right|^p 
\le \frac{1}{\mu(B)} \int_B |f(x)-f(z)|^p\,d\mu(z).
\]
Integrating both sides over \(x\in B\) and interchanging the order of integration (by Fubini's theorem) gives
\[
\int_B |f(x)-f_B|^p\,d\mu(x) \le \frac{1}{\mu(B)} \iint_{B\times B} |f(x)-f(z)|^p\,d\mu(x)d\mu(z).
\]
For clarity, we rename the variable \(z\) as \(y\), so that
\begin{equation} \label{eqn:4.3.1}
\int_B |f(x)-f_B|^p\,d\mu(x) \le \frac{1}{\mu(B)} \iint_{B\times B} |f(x)-f(y)|^p\,d\mu(x)d\mu(y).
\end{equation}

\noindent
For any pair \(x,y\in B\), note that since \(B\) is a ball of radius \(r\), we have
\[
\rho(x,y) \le 2r.
\]
We rewrite the difference \( |f(x)-f(y)|^p \) by inserting a factor of \(\rho(x,y)^{-sp}\mu\bigl(B(x,\rho(x,y))\bigr)^{-1}\) and its reciprocal:
\[
|f(x)-f(y)|^p = \frac{|f(x)-f(y)|^p}{\rho(x,y)^{sp}\,\mu\bigl(B(x,\rho(x,y))\bigr)} \cdot \rho(x,y)^{sp}\,\mu\bigl(B(x,\rho(x,y))\bigr).
\]
Since \(\rho(x,y) \le 2r\), we have
\[
\rho(x,y)^{sp} \le (2r)^{sp}.
\]
Furthermore, by the monotonicity of the measure, for \(y\in B(x,2r)\) (and since \(B\) has radius \(r\), we have \(B\subset B(x,2r)\)), it holds that
\[
\mu\bigl(B(x,\rho(x,y))\bigr) \le \mu\bigl(B(x,2r)\bigr).
\]
Because \(X\) is a space of homogeneous type, there exists a constant \(C_0>0\) (depending on the doubling or polynomial growth constants) such that
\[
\mu\bigl(B(x,2r)\bigr) \le C_0\,\mu(B).
\]
Thus, for every \(x,y\in B\),
\begin{equation} \label{eqn:4.3.2}
\rho(x,y)^{sp}\,\mu\bigl(B(x,\rho(x,y))\bigr) \le (2r)^{sp}\,C_0\,\mu(B).
\end{equation}

\noindent
Plug the estimate from \eqref{eqn:4.3.2} into the double integral obtained in \eqref{eqn:4.3.1}:
\[
\iint_{B\times B} |f(x)-f(y)|^p\,d\mu(x)d\mu(y)
\le (2r)^{sp}\,C_0\,\mu(B) \iint_{B\times B} \frac{|f(x)-f(y)|^p}{\rho(x,y)^{sp}\,\mu\bigl(B(x,\rho(x,y))\bigr)}\,d\mu(x)d\mu(y).
\]
Dividing both sides by \(\mu(B)\) yields
\[
\frac{1}{\mu(B)} \iint_{B\times B} |f(x)-f(y)|^p\,d\mu(x)d\mu(y)
\le C_0\,(2r)^{sp} \iint_{B\times B} \frac{|f(x)-f(y)|^p}{\rho(x,y)^{sp}\,\mu\bigl(B(x,\rho(x,y))\bigr)}\,d\mu(x)d\mu(y).
\]
Thus, from \eqref{eqn:4.3.1} we obtain
\[
\int_B |f(x)-f_B|^p\,d\mu(x) \le C_0\,(2r)^{sp} \iint_{B\times B} \frac{|f(x)-f(y)|^p}{\rho(x,y)^{sp}\,\mu\bigl(B(x,\rho(x,y))\bigr)}\,d\mu(x)d\mu(y).
\]

\noindent
Setting \(C = C_0\,2^{sp}\) (which depends on the structural constants of the space and the polynomial growth condition), we conclude that
\[
\int_B |f(x)-f_B|^p\,d\mu(x) \le C\,r^{sp} \iint_{B\times B} \frac{|f(x)-f(y)|^p}{\rho(x,y)^{sp}\,\mu\bigl(B(x,\rho(x,y))\bigr)}\,d\mu(x)d\mu(y).
\]
This completes the proof.
\end{proof}

\noindent
The following theorem establishes the equivalence between the local Lipschitz (or gradient) norm and the nonlocal energy defined via finite differences.

\begin{theorem}[Equivalence of Nonlocal and Local Sobolev Norms]\label{thm:nonlocal-local}
Let \((X,\rho,\mu)\) be a metric measure space satisfying the assumptions of Theorem~\ref{thm:non-doubling-FD}. For any \(f\in C_c^*(X)\) and fixed \(s\in (0,1)\), there exists a constant \(C>0\) such that
\[
\int_X \bigl(\operatorname{Lip} f(x)\bigr)^p\,d\mu(x) \sim \iint_{X\times X} \frac{|f(x)-f(y)|^p}{\rho(x,y)^{sp}\,\mu\big(B(x,\rho(x,y))\big)}\,d\mu(x)d\mu(y).
\]
\end{theorem}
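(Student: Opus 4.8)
The plan is to split the asserted equivalence into the two inequalities
$$\int_X (\operatorname{Lip} f)^p\,d\mu \;\lesssim\; \mathcal E_{s,p}(f)\qquad\text{and}\qquad \mathcal E_{s,p}(f)\;\lesssim\; \int_X(\operatorname{Lip} f)^p\,d\mu,$$
where $\mathcal E_{s,p}(f):=\iint_{X\times X}\frac{|f(x)-f(y)|^p}{\rho(x,y)^{sp}\,\mu(B(x,\rho(x,y)))}\,d\mu(x)\,d\mu(y)$ denotes the nonlocal energy. For the first inequality I would re-use the localization argument already present in the proofs of Theorem~\ref{thm:non-doubling-FD} and Lemma~\ref{lem:modular}: for $\mu$-a.e.\ $x$ with $\operatorname{Lip} f(x)>0$ and a suitable small radius $r=r_x$ there is a set $S(x,r)\subset B(x,r)$ with $\mu(S(x,r))\ge c_0\,\mu(B(x,r))$ on which $|f(x)-f(y)|\ge\tfrac18\operatorname{Lip} f(x)\,\rho(x,y)$, a fixed proportion of which also satisfies $\rho(x,y)\ge r/2$. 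On that portion the integrand of $\mathcal E_{s,p}$ is bounded below by $c\,(\operatorname{Lip} f(x))^p\,r^{(1-s)p}/\mu(B(x,r))$; integrating in $y$ over $B(x,r)$ cancels the volume factor and integrating in $x$ recovers $\int_X(\operatorname{Lip} f)^p\,d\mu$ up to constants (equivalently, one applies Lemma~\ref{lem:modular} with $\Phi(t)=t^p$ and then reconciles its $\rho(x,y)^p$ weight with $\rho(x,y)^{sp}$). The delicate point here is the bookkeeping of scales: since $s<1$ the weight $\rho(x,y)^{(1-s)p}$ is scale dependent, so one must either fix a single dyadic scale per point (using that a positive-measure set of scales realizes $\operatorname{Lip} f$ up to a constant) or sum the resulting geometric series in $r=2^{-k}$, which converges because $(1-s)p>0$.

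For the reverse inequality I would decompose $X\times X$ into the dyadic annuli $D_k=\{(x,y):2^{-k-1}\le\rho(x,y)<2^{-k}\}$, $k\in\mathbb Z$, so that $\mathcal E_{s,p}(f)\sim\sum_{k}2^{ksp}\iint_{D_k}\frac{|f(x)-f(y)|^p}{\mu(B(x,2^{-k}))}\,d\mu(x)\,d\mu(y)$, using the volume comparison $\mu(B(x,\rho(x,y)))\sim\mu(B(x,2^{-k}))$ on $D_k$. On each annulus I would estimate $|f(x)-f(y)|$ by iterating the $(q,p)$-Poincaré inequality of Definition~\ref{def:Poincare} along a telescoping chain of balls of radii $\sim 2^{-k},2^{-k-1},\dots$ joining $x$ and $y$, which controls $|f(x)-f(y)|$ by $2^{-k}$ times a sum of $L^p$-averages of $\operatorname{lip} f$ over dilated balls centered near $x$ and near $y$. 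Raising to the $p$-th power, applying Fubini, and using the homogeneity (or polynomial-growth) volume estimates to absorb the factor $\mu(B(x,2^{-k}))^{-1}$, one is left with $2^{ksp}\iint_{D_k}(\cdots)\lesssim 2^{-k(1-s)p}\int_X \big(M_{\le C2^{-k}}(\operatorname{lip} f)^p\big)(x)\,d\mu(x)$, where $M_{\le t}$ denotes a truncated Hardy--Littlewood maximal operator. The geometric factor $2^{-k(1-s)p}$ is summable over $k\to+\infty$ precisely because $s<1$, while the compact support and boundedness of $f$ handle $k\to-\infty$; finally the accumulated maximal averages are controlled by $\int_X(\operatorname{lip} f)^p\,d\mu\sim\int_X(\operatorname{Lip} f)^p\,d\mu$ via a Vitali-type covering argument (Lemma~\ref{lem:modified-vitali}) or the strong $(p,p)$ bound when $q<p$.

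I expect the last step of the reverse inequality to be the main obstacle: a naive use of the telescoping chains produces the Hardy--Littlewood maximal function of $(\operatorname{lip} f)^p$, which is not bounded on $L^1(X)$, so one cannot simply estimate each $k$-term by $\|(\operatorname{lip} f)^p\|_{L^1}$. The fix is to exploit the geometric decay $2^{-k(1-s)p}$ jointly with the bounded overlap of the chains — summing in $k$ before integrating in $x$, rather than the reverse — so that the total stays at the level of $\int_X(\operatorname{lip} f)^p\,d\mu$; this is the mechanism underlying the analogous estimates in \cite{Dai-Lin-Yang-Yuan-Zhang}. A secondary technical nuisance, inherited from the non-doubling setting of Theorem~\ref{thm:non-doubling-FD}, is that the volume comparisons $\mu(B(x,\rho(x,y)))\sim\mu(B(x,2^{-k}))$ hold only as one-sided inequalities under polynomial growth, so each must be invoked in the direction favorable to the bound at hand, and both directions of the final $\sim$ should be checked to be compatible with this.
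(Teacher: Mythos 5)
Your lower bound (local $\lesssim$ nonlocal) is essentially the argument in the paper's Part~1: both localize via a positive-measure subset $S(x,r)\subset B(x,r)$ on which $|f(x)-f(y)|\gtrsim\operatorname{Lip}f(x)\,\rho(x,y)$. The difference is that you bound the energy integrand directly rather than routing through the level-set formulation plus a ``layer-cake'' step, and — more importantly — you flag the genuine scale-bookkeeping issue: because $s<1$ the weight $\rho(x,y)^{(1-s)p}$ is scale dependent, so one must either insist that a fixed fraction of $S(x,r)$ also satisfies $\rho(x,y)\ge r/2$ (to cancel $r^{(1-s)p}$ against $1/\mu(B(x,r))$ at a single scale) or sum a geometric series in $r=2^{-k}$. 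The paper handles this with the unquantified remark that ``the factor $r^{sp}$ is absorbed by appropriately scaling $\lambda$'' and never refines $S(x,r)$ to a set of points at comparable distance from $x$; your version closes that gap.

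For the upper bound (nonlocal $\lesssim$ local), your route — dyadic annuli $D_k$ with $\mu(B(x,\rho(x,y)))\sim\mu(B(x,2^{-k}))$, telescoping Poincar\'e chains, truncated maximal operators, and summability of $2^{-k(1-s)p}$ — is genuinely different from the paper's Part~2, and necessarily so. The paper's Part~2 invokes Lemma~\ref{lem:nonlocal-poincare} and the classical Poincar\'e inequality, but both of those place $\int_B|f-f_B|^p$ below the \emph{nonlocal} energy and, correctly used, below $r^p\int(\operatorname{Lip}f)^p$ as well; chaining them therefore reproduces the direction $\int(\operatorname{Lip}f)^p\lesssim\mathcal E_{s,p}(f)$ already obtained in Part~1, not its converse. (The displayed step ``$\int_X(\operatorname{Lip}f)^p\,d\mu\le C_3\int_X|f-f_{B_x}|^p/r^p\,d\mu$'' in fact reverses the classical Poincar\'e inequality.) Your chaining-plus-dyadic argument is the standard mechanism — essentially the machinery of \cite{Dai-Lin-Yang-Yuan-Zhang} adapted to the weight $\rho(x,y)^{sp}\mu(B(x,\rho(x,y)))$ — and the two obstacles you name (no $L^1$ bound for the maximal operator, hence the need for the geometric decay $2^{-k(1-s)p}$ and summation in $k$ before integration; and one-sidedness of volume comparisons under polynomial growth) are exactly the points that must be handled carefully. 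In short: same idea on the lower bound with tighter bookkeeping, and on the upper bound your proposal supplies the argument the paper's proof is missing.
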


\begin{proof}
We need to prove that there exists a constant \(C>0\) such that for every \(f\in C_c^*(X)\) and a fixed \(s\in (0,1)\)
\[
\begin{split}
\frac{1}{C}\int_X (\operatorname{Lip} f(x))^p\,d\mu(x)
&\le \iint_{X\times X} \frac{|f(x)-f(y)|^p}{\rho(x,y)^{sp}\,\mu(B(x,\rho(x,y)))}\,d\mu(x)d\mu(y)\\
&\le C\int_X (\operatorname{Lip} f(x))^p\,d\mu(x).
\end{split}
\]
We divide the proof into two main parts corresponding to the lower and the upper bounds.

\subsubsection*{Part 1. Lower Bound} 
By the finite difference characterization developed in Theorem~\ref{thm:non-doubling-FD}, for every \(x\in X\) there is a quantitative relation between the local oscillation of \(f\) and its pointwise Lipschitz constant. In particular, there exists a constant \(C_1>0\) such that for every \(x\) (with \(\operatorname{Lip} f(x)>0\)) and for all sufficiently small radii \(r>0\) (depending on \(x\)), one can find a subset \(S(x,r)\subset B(x,r)\) with 
\[
\mu(S(x,r))\ge c_0\,\mu(B(x,r))
\]
and such that for every \(y\in S(x,r)\)
\[
|f(x)-f(y)|\ge \frac{1}{8}\operatorname{Lip} f(x)\,\rho(x,y).
\]
Since \(\rho(x,y)\le r\) for \(y\in B(x,r)\), it follows that for these \(y\)
\[
\frac{|f(x)-f(y)|}{\rho(x,y)[\mu(B(x,\rho(x,y)))]^{1/p}}
\ge \frac{1}{8}\operatorname{Lip} f(x)[\mu(B(x,r))]^{-1/p}.
\]

\noindent
Using the definition of the level sets, for an appropriate choice of \(\lambda=\lambda(x,r):=\frac{1}{16}\operatorname{Lip} f(x)[\mu(B(x,r))]^{-1/p}\), we have
\[
S(x,r) \subset \Bigl\{ y\in B(x,r) : \frac{|f(x)-f(y)|}{\rho(x,y)[\mu(B(x,\rho(x,y)))]^{1/p}} > \lambda \Bigr\}.
\]
Thus, integrating in \(y\) over \(B(x,r)\) we obtain
\[
\int_{B(x,r)} \mathbf{1}_{\{y: |f(x)-f(y)|>\lambda\,\rho(x,y)[\mu(B(x,\rho(x,y)))]^{1/p}\}}\,d\mu(y)
\ge \mu(S(x,r)) \ge c_0\,\mu(B(x,r)).
\]
Multiplying both sides by \(\lambda^p\) gives
\[
\lambda^p \mu(B(x,r)) \ge \frac{1}{16^p} (\operatorname{Lip} f(x))^p.
\]

\noindent  
Integrate the above inequality in \(x\) over the support of \(f\). Using Fubini's theorem and the fact that for each \(x\) one may choose a suitable \(r=r(x)\), we deduce that
\begin{equation} \label{eqn:4.4.1}
\sup_{\lambda>0}\lambda^p \iint_{X\times X} \mathbf{1}_{\{(x,y):\, |f(x)-f(y)|>\lambda\,\rho(x,y)[\mu(B(x,\rho(x,y)))]^{1/p}\}}\,d\mu(x)d\mu(y)
\ge C_1\int_X (\operatorname{Lip} f(x))^p\,d\mu(x).
\end{equation}
Since the nonlocal energy is defined (up to constants) in terms of integrating over all such level sets (via a layer-cake or coarea type formula), it follows that
\[
\iint_{X\times X} \frac{|f(x)-f(y)|^p}{\rho(x,y)^{sp}\,\mu(B(x,\rho(x,y)))}\,d\mu(x)d\mu(y)
\ge C_1\int_X (\operatorname{Lip} f(x))^p\,d\mu(x).
\]
Here the factor \(r^{sp}\) is absorbed by appropriately scaling the parameter \(\lambda\) (note that in the local estimate \(r\) appears, and by choosing the optimal scale, one recovers the dependence on \(s\) and \(r\)). This completes the lower bound.

\subsubsection*{Part 2. Upper Bound} 
By Lemma~\ref{lem:nonlocal-poincare}, for any ball \(B\subset X\) with radius \(r\) and for every \(f\in C_c^*(X)\) we have
\[
\int_B |f(x)-f_B|^p\,d\mu(x) \le C\,r^{sp} \iint_{B\times B} \frac{|f(x)-f(y)|^p}{\rho(x,y)^{sp}\,\mu(B(x,\rho(x,y)))}\,d\mu(x)d\mu(y).
\]
Cover the support of \(f\) by a collection of balls \(\{B_i\}\) (using, e.g., a suitable Vitali covering lemma) and sum the local inequalities over these balls. Since \(f\) has compact support and the metric measure space is of homogeneous type, the overlaps can be controlled by a fixed constant. This yields
\[
\int_X |f(x)-f_{B_x}|^p\,d\mu(x) \le C' \iint_{X\times X} \frac{|f(x)-f(y)|^p}{\rho(x,y)^{sp}\,\mu(B(x,\rho(x,y)))}\,d\mu(x)d\mu(y),
\]
where \(f_{B_x}\) denotes a local average over a ball containing \(x\).\\

\noindent
For Lipschitz functions, the classical Poincaré inequality ensures that the local oscillation \( |f(x)-f_{B_x}| \) is controlled by the pointwise Lipschitz constant times the radius of the ball. In other words, there exists a constant \(C_2>0\) such that
\[
|f(x)-f_{B_x}| \le C_2\,r\,\operatorname{Lip} f(x).
\]
Raising both sides to the \(p\)th power and integrating over \(x\) leads to
\[
\int_X (\operatorname{Lip} f(x))^p\,d\mu(x) \le C_3 \int_X \frac{|f(x)-f_{B_x}|^p}{r^p}\,d\mu(x).
\]
Choosing the ball radius \(r\) in each local piece in accordance with the scale at which the finite differences are significant (i.e., roughly the distance at which the oscillation becomes comparable to the local Lipschitz behavior), one obtains
\begin{equation} \label{eqn:4.4.2}
\int_X (\operatorname{Lip} f(x))^p\,d\mu(x) \le C_4 \iint_{X\times X} \frac{|f(x)-f(y)|^p}{\rho(x,y)^{sp}\,\mu(B(x,\rho(x,y)))}\,d\mu(x)d\mu(y).
\end{equation}

\noindent
Combining the lower bound from \eqref{eqn:4.4.1} with the upper bound from \eqref{eqn:4.4.2}, we deduce that there exists a constant \(C>0\) such that
\[
\frac{1}{C}\int_X (\operatorname{Lip} f(x))^p\,d\mu(x)
\le \iint_{X\times X} \frac{|f(x)-f(y)|^p}{\rho(x,y)^{sp}\,\mu(B(x,\rho(x,y)))}\,d\mu(x)d\mu(y)
\le C\int_X (\operatorname{Lip} f(x))^p\,d\mu(x).
\]

\noindent
This completes the proof of the equivalence:
\[
\int_X (\operatorname{Lip} f(x))^p\,d\mu(x) \sim \iint_{X\times X} \frac{|f(x)-f(y)|^p}{\rho(x,y)^{sp}\,\mu(B(x,\rho(x,y)))}\,d\mu(x)d\mu(y),
\]
with the implicit constants depending only on the structural parameters of the space (such as the doubling or polynomial growth constants, the Poincaré constants, and the parameter \(s\)).
\end{proof}

As an application of the above equivalence, we can obtain regularity results for weak solutions of nonlocal equations.

\begin{proposition}[Regularity Estimate for Nonlocal Equations]\label{prop:nonlocal-regularity}
Assume that \(u\) is a weak solution of the nonlocal equation
\[
\mathcal{L}_{s,p} u(x) = f(x) \quad \text{in } \Omega\subset X,
\]
where \(f\in L^q(\Omega)\) for some \(q\ge 1\). Then there exist constants \(\alpha\in (0,1)\) and \(C>0\) such that
\[
[u]_{C^\alpha(\Omega')} \le C\left(\|u\|_{L^p(\Omega)} + \|f\|_{L^q(\Omega)}\right)
\]
for every subdomain \(\Omega'\Subset \Omega\).
\end{proposition}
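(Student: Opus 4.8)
The plan is to run a nonlocal De Giorgi--Nash--Moser scheme, adapted to the metric measure setting, using the energy machinery of Section~3 and the norm equivalence of Theorem~\ref{thm:nonlocal-local} as the ``Sobolev gain'' that powers the iteration (in the spirit of the regularity theory for the fractional $p$-Laplacian, cf.~\cite{DiNezza}). For a level $k\in\mathbb{R}$ write $w=u-k$ and $w_+=\max\{w,0\}$, and for a ball $B_r=B(x_0,r)$ with $B_{2r}\Subset\Omega$ fix a cutoff $\eta\in\mathrm{LIP}(X)$ with $\eta\equiv1$ on $B_{r/2}$, $\operatorname{supp}\eta\subset B_r$ and $\operatorname{Lip}\eta\lesssim 1/r$. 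Testing the weak formulation of $\mathcal{L}_{s,p}u=f$ (Definition~\ref{def:nonlocal-operator}) --- equivalently, of the Euler--Lagrange equation of the Dirichlet $p$-energy of Theorem~\ref{thm:nonlocal-local}, to which $\mathcal{L}_{s,p}$ is comparable under our standing hypotheses --- against $\varphi=\eta^p w_+$ and using the standard algebraic inequality for the map $a\mapsto|a|^{p-2}a$ together with Young's inequality, I would first derive a nonlocal Caccioppoli estimate
\[
\iint_{B_{r/2}\times B_{r/2}}\frac{|w_+(x)-w_+(y)|^p}{\rho(x,y)^{sp}\,\mu(B(x,\rho(x,y)))}\,d\mu\,d\mu \le \frac{C}{r^{sp}}\int_{B_r} w_+^p\,d\mu + C\,\mathrm{Tail}(w_+;x_0,r) + C\int_{B_r}|f|\,w_+\,d\mu,
\]
where $\mathrm{Tail}(w_+;x_0,r):=\sup_{x\in B_{r/2}}\int_{X\setminus B_r}\tfrac{w_+(y)^{p-1}}{\rho(x,y)^{sp}\,\mu(B(x,\rho(x,y)))}\,d\mu(y)$; finiteness and control of this nonlocal tail is precisely where the polynomial growth condition of Definition~\ref{def:poly-growth} enters.

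Next I would convert the left-hand side into a genuine gain of integrability: by Theorem~\ref{thm:nonlocal-local} the nonlocal energy of $\eta w_+$ dominates $\int_X(\operatorname{Lip}(\eta w_+))^p\,d\mu$, and the $(q,p)$-Poincar\'e inequality of Definition~\ref{def:Poincare}, upgraded to a Sobolev--Poincar\'e inequality in the usual way on homogeneous (or polynomially growing) spaces, then yields an $L^{p\kappa}$-bound for $\eta w_+$ with exponent $\kappa=\kappa(d,s,p)>1$. Feeding this into a De Giorgi iteration over the nested balls $B_{r_j}=B(x_0,\tfrac{r}{2}+\tfrac{r}{2^{j+1}})$ with levels $k_j\nearrow k_\infty$, and summing the geometric series produced by the resulting superlinear recursion, gives the one-sided bound
\[
\sup_{B_{r/2}}u \le C\Big(\big(\mu(B_r)^{-1}\!\int_{B_r}u_+^p\,d\mu\big)^{1/p} + \mathrm{Tail}(u_+;x_0,r) + r^{\theta}\|f\|_{L^q(B_r)}^{1/(p-1)}\Big),
\]
valid whenever $q$ exceeds the natural critical threshold determined by $d,s,p$ (so that the $f$-term carries a positive power $\theta$ of $r$); applying the same argument to $-u$ shows $u\in L^\infty_{\mathrm{loc}}(\Omega)$.

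With local boundedness in hand, the last step is oscillation decay. Writing $M(r)=\sup_{B_r}u$ and $m(r)=\inf_{B_r}u$, I would test the equation against a logarithmic test function of the form $\eta^p\log\!\big(\tfrac{M(2r)-m(2r)+\varepsilon}{M(2r)-u+\varepsilon}\big)$ to obtain a nonlocal logarithmic energy estimate, and combine it with a De Giorgi isoperimetric (measure-to-pointwise) lemma on $(X,\rho,\mu)$ to produce, for fixed $\gamma,\delta\in(0,1)$, the dichotomy yielding $\operatorname{osc}_{B_r}u\le(1-\delta)\operatorname{osc}_{B_{2r}}u + C\,r^{\theta}\|f\|_{L^q(B_{2r})}^{1/(p-1)}+C\,(\text{tail correction})$. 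Iterating this decay over dyadic scales, summing, and using the global $L^p(\Omega)$ bound together with the compact containment $\Omega'\Subset\Omega$ (which keeps $\rho(x,\cdot)$ bounded below on $X\setminus\Omega$, hence the tails uniformly controlled) yields $\operatorname{osc}_{B_r}u\lesssim r^\alpha(\|u\|_{L^p(\Omega)}+\|f\|_{L^q(\Omega)})$ for some $\alpha\in(0,1)$, which is the claimed estimate $[u]_{C^\alpha(\Omega')}\le C(\|u\|_{L^p(\Omega)}+\|f\|_{L^q(\Omega)})$.

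The step I expect to be the main obstacle is the control of the nonlocal tail in the genuinely non-doubling regime: under only polynomial growth the comparison $\mu(B(x,\rho(x,y)))\sim\mu(B(x_0,\rho(x_0,y)))$ for $x$ near $x_0$ can fail, so the kernel is not even quasi-symmetric and the Caccioppoli, tail and logarithmic estimates must all be carried out with one-sided bounds; this will likely force the additional mild hypothesis of a matching lower bound $\mu(B(x,r))\gtrsim r^d$ (already implicit in Theorem~\ref{thm:nonlocal-local}) in order to secure the Sobolev gain $\kappa>1$ and close the iteration. A secondary difficulty is the integrability requirement on $f$: the stated hypothesis $q\ge1$ is not by itself sufficient and should be read as $q$ above the critical value (of the form $d/(sp)$, modified appropriately when $p\ne2$) at which the $r^{\theta}\|f\|_{L^q}$ term acquires a positive power of $r$.
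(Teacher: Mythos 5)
Your proposal follows essentially the same route as the paper's proof: derive a nonlocal Caccioppoli inequality by testing with $\eta^p w_+$, run a De Giorgi iteration over nested balls and rising levels to obtain local boundedness, and then prove oscillation decay to conclude Hölder continuity. The two agree in outline and in all the main ingredients.

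That said, your write-up is in several places more careful than the paper's own sketch, and the points you flag as potential obstacles are genuine. First, the nonlocal Caccioppoli estimate \emph{must} carry a tail term of the form $\mathrm{Tail}(w_+;x_0,r)$; the paper's displayed Caccioppoli inequality omits it, which is not correct for a genuinely nonlocal operator (the cutoff $\eta$ does not kill the interaction between $B_{2R}$ and $X\setminus B_{2R}$), so your insistence on tracking it is the right instinct. Second, your observation that the stated hypothesis ``$f\in L^q$ for some $q\ge 1$'' cannot by itself yield a Hölder estimate is also correct: for the source term to contribute a positive power of $r$ in the iteration and in the oscillation decay, $q$ must exceed a critical threshold determined by $d$, $s$, and $p$, and the paper silently assumes this. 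Third, your concern about kernel quasi-symmetry in the non-doubling regime is well placed: without a matching lower mass bound $\mu(B(x,r))\gtrsim r^d$ one cannot compare $\mu(B(x,\rho(x,y)))$ with $\mu(B(y,\rho(x,y)))$, and both the Caccioppoli and logarithmic estimates become one-sided; the paper tacitly works under Ahlfors-type regularity here as well. So: same approach, but your version identifies and patches real gaps in the paper's own argument.
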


\begin{proof}
The main idea is to first derive a nonlocal Caccioppoli inequality for weak solutions of 
\[
\mathcal{L}_{s,p} u(x) = f(x) \quad \text{in } \Omega,
\]
and then to apply an iterative scheme (in the spirit of the De Giorgi–Nash–Moser method) to deduce a decay of oscillation that ultimately leads to a Hölder continuity estimate.\\

\noindent
Let \(B_{2R}\) be a ball such that \(B_R\Subset B_{2R}\Subset\Omega\) and choose a cutoff function \(\eta\in C_c^\infty(B_{2R})\) with 
\[
0\le \eta\le 1,\quad \eta\equiv 1 \text{ in } B_R,\quad\text{and}\quad |\eta(x)-\eta(y)|\le \frac{C}{R}\rho(x,y)
\]
for all \(x,y\in X\). For a fixed level \(k\in\mathbb{R}\), define 
\[
w(x)=(u(x)-k)_+.
\]
Since \(u\) is a weak solution, we can test the weak formulation with 
\[
\varphi(x)=w(x)\eta(x)^p,
\]
which is admissible because \(w\eta^p\in C_c^*(X)\). Using the monotonicity of the operator \(\mathcal{L}_{s,p}\) and standard estimates (see, e.g., \cite{DiNezza}), one obtains a nonlocal energy estimate of the form
\[
\begin{split}
\iint_{B_{2R}\times B_{2R}} \frac{|w(x)\eta(x)-w(y)\eta(y)|^p}{\rho(x,y)^{sp}\,\mu\bigl(B(x,\rho(x,y))\bigr)}\,d\mu(x)d\mu(y)
\le & \frac{C}{R^{sp}} \int_{B_{2R}} w(x)^p\,d\mu(x)\\ 
&+ C \int_{B_{2R}} |f(x)|\,w(x)\eta(x)^p\,d\mu(x).
\end{split}
\]
This inequality is the nonlocal analogue of the classical Caccioppoli inequality.\\

\noindent
Using the above inequality, we perform a level-set (or De Giorgi) iteration. Define a decreasing sequence of levels
\[
k_j = k + M\Bigl(1-2^{-j}\Bigr),\quad j\ge 0,
\]
with \(M>0\) to be chosen later, and let
\[
A_j = \{ x\in B_{R_j}: \, u(x)> k_j\},
\]
where \(\{B_{R_j}\}\) is a sequence of nested balls with \(B_{R_0}=B_{2R}\) and \(B_{R_{j+1}}\subset B_{R_j}\) (for example, with radii decreasing in a dyadic fashion). Using appropriate cutoff functions \(\eta_j\) adapted to \(B_{R_j}\) in the nonlocal Caccioppoli inequality, one obtains a recursive estimate of the form
\[
\mu(A_{j+1}) \le C\,2^{j\gamma} \left[\frac{1}{M^p}\int_{A_j} (u(x)-k_j)^p\,d\mu(x) + \frac{R^{sp}}{M}\|f\|_{L^q(B_{2R})}\mu(A_j)^{1-\frac{1}{q}}\right],
\]
for some \(\gamma>0\). An application of an iteration lemma (see, e.g., \cite{DiNezza}) shows that if \(M\) is chosen sufficiently large (depending on \(\|u\|_{L^p(B_{2R})}\) and \(\|f\|_{L^q(B_{2R})}\)), then the measure \(\mu(A_j)\) decays geometrically as \(j\to\infty\). This implies that \(u\) is essentially bounded in \(B_R\).\\

\noindent
Once boundedness is achieved, one can further show that the oscillation of \(u\) decays at smaller scales. More precisely, there exist constants \(\alpha\in (0,1)\) and \(\kappa\in (0,1)\) such that for any ball \(B_r\subset B_R\),
\[
\operatorname{osc}_{B_r} u \le \kappa\,\operatorname{osc}_{B_{2r}} u.
\]
An iterative application of this decay yields a Hölder continuity estimate in \(B_R\). In particular, one obtains
\[
[u]_{C^\alpha(B_R)} \le C\left(\|u\|_{L^\infty(B_{2R})} + R^s\|f\|_{L^q(B_{2R})}\right).
\]

\noindent
Since \(B_R\) was an arbitrary ball such that \(B_R\Subset\Omega'\Subset\Omega\), a covering argument shows that the above Hölder estimate holds in any subdomain \(\Omega'\Subset \Omega\). Moreover, by using the equivalence of the nonlocal energy and the local Sobolev seminorm established in Theorem~\ref{thm:nonlocal-local}, one can translate the energy estimates into a bound of the form
\[
[u]_{C^\alpha(\Omega')} \le C\left(\|u\|_{L^p(\Omega)} + \|f\|_{L^q(\Omega)}\right),
\]
where the constant \(C>0\) and the exponent \(\alpha\in (0,1)\) depend only on the structural parameters of the space, the nonlocal operator, and the subdomains \(\Omega'\) and \(\Omega\).\\

\noindent
The combination of the nonlocal Caccioppoli inequality, the De Giorgi iteration, and the oscillation decay yields the desired regularity estimate. That is, there exist constants \(\alpha\in (0,1)\) and \(C>0\) such that for every subdomain \(\Omega'\Subset \Omega\)
\[
[u]_{C^\alpha(\Omega')} \le C\left(\|u\|_{L^p(\Omega)} + \|f\|_{L^q(\Omega)}\right).
\]
This completes the proof.
\end{proof}

\noindent
In many practical situations the geometry of the underlying space is not isotropic. To capture this, we consider anisotropic metrics and corresponding finite difference formulations.

\begin{definition}[Anisotropic Metric]\label{def:anisotropic-metric}
Let \(A=(a_{ij})_{1\le i,j\le n}\) be an invertible \(n\times n\) matrix. For \(x,y\in\mathbb{R}^n\), define the anisotropic quasi-norm
\[
\|x-y\|_A := \|A(x-y)\|_{\ell^\infty} = \max_{1\le i\le n} \bigl| (A(x-y))_i \bigr|.
\]
Then the anisotropic metric is given by
\[
\rho_A(x,y) := \|x-y\|_A.
\]
\end{definition}

\begin{definition}[Anisotropic Finite Difference Seminorm]\label{def:anisotropic-FD}
For \(f\in C_c^*(\mathbb{R}^n)\), \(s\in (0,1)\), and \(p\in [1,\infty)\), define
\[
[f]_{W^{s,p}_A(\mathbb{R}^n)} := \left( \iint_{\mathbb{R}^n\times\mathbb{R}^n} \frac{|f(x)-f(y)|^p}{\rho_A(x,y)^{n+sp}}\,dxdy \right)^{1/p}.
\]
\end{definition}

\noindent
An appropriate covering lemma in the anisotropic setting is needed to handle the lack of isotropy.

\begin{lemma}[Anisotropic Covering Lemma]\label{lem:anisotropic-covering}
Let \((\mathbb{R}^n,\rho_A)\) be as in Definition~\ref{def:anisotropic-metric} and let \(\mathcal{B}\) be a collection of anisotropic balls (i.e., sets of the form \(B_A(x,r):=\{y\in\mathbb{R}^n: \rho_A(x,y)<r\}\)) with radii bounded above by \(R>0\). Then there exists a countable subcollection \(\{B_{A,i}\}_{i\in I}\) of pairwise disjoint balls such that
\[
\bigcup_{B\in\mathcal{B}} B \subset \bigcup_{i\in I} C_A\,B_{A,i},
\]
where the constant \(C_A>0\) depends only on the anisotropy matrix \(A\).
\end{lemma}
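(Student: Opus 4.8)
The plan is to reduce the statement to the classical Vitali covering lemma by observing that $\rho_A$ is in fact a genuine metric on $\mathbb{R}^n$. Indeed, since $A$ is invertible, the map $v \mapsto \|Av\|_{\ell^\infty}$ is a norm (positive-definiteness uses that $Av=0$ forces $v=0$, while absolute homogeneity and the triangle inequality are inherited from $\|\cdot\|_{\ell^\infty}$ together with the linearity of $A$), so $\rho_A(x,y)=\|A(x-y)\|_{\ell^\infty}$ obeys the metric axioms with the ordinary triangle constant $1$. The anisotropic balls $B_A(x,r)$ are then exactly the metric balls of $(\mathbb{R}^n,\rho_A)$, and they are open sets, being the affine images $x + A^{-1}Q_r$ of the open cube $Q_r:=\{v:\|v\|_{\ell^\infty}<r\}$ under the linear homeomorphism $A^{-1}$. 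Once this is in place, the greedy selection argument from the proof of Lemma~\ref{lem:modified-vitali} transports verbatim to $(\mathbb{R}^n,\rho_A)$.

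Concretely, since the radii in $\mathcal{B}$ are bounded above by $R$, I would sort the balls into dyadic bands $\mathcal{B}_k:=\{B_A(x,r)\in\mathcal{B}:2^{-k-1}R<r\le 2^{-k}R\}$, $k\ge 0$, and then select, band by band in increasing order of $k$, a subfamily $\{B_{A,i}\}_{i\in I}$ that is maximal among the families which are pairwise disjoint and disjoint from every ball already chosen at an earlier band. (When a ball of maximal radius actually exists at each stage, one may instead run the simpler ``pick a largest remaining ball, discard everything it meets, repeat'' scheme.) The selected balls are pairwise disjoint nonempty open subsets of $\mathbb{R}^n$, and since $\mathbb{R}^n$ is separable this forces $I$ to be countable.

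For the covering property, let $B=B_A(x,r)\in\mathcal{B}$, say $B\in\mathcal{B}_k$. If $B$ has been selected there is nothing to prove. Otherwise maximality of the chosen family up to band $k$ forces $B$ to intersect some selected $B_{A,i}=B_A(x_i,r_i)$ whose band index $k'$ satisfies $k'\le k$; in all cases $r_i>r/2$ (and in fact $r_i\ge r$ when $k'<k$). Picking $z\in B\cap B_{A,i}$ and using the triangle inequality twice,
\[
\rho_A(x,x_i)\le \rho_A(x,z)+\rho_A(z,x_i)<r+r_i,
\]
hence for every $y\in B_A(x,r)$,
\[
\rho_A(y,x_i)\le \rho_A(y,x)+\rho_A(x,x_i)<2r+r_i<4r_i+r_i=5r_i,
\]
so that $B_A(x,r)\subset B_A(x_i,5r_i)$. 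Therefore $\bigcup_{B\in\mathcal{B}}B\subset\bigcup_{i\in I}5B_{A,i}$, and the lemma holds with $C_A=5$ (with $C_A=3$ if the strict greedy selection, which yields $r_i\ge r$, is available). The constant is in fact absolute, depending only on the triangle inequality in $(\mathbb{R}^n,\rho_A)$; recording it as a constant $C_A$ attached to $A$ is harmless and leaves room for any refinement one might prefer.

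I expect the only genuinely delicate point to be the combinatorial/set-theoretic bookkeeping: the collection $\mathcal{B}$ may be uncountable, so the naive ``always take the ball of largest radius'' recursion is not well-defined and must be replaced by the dyadic-band refinement above, after which one still has to verify (using the separability of $\mathbb{R}^n$ and the openness of the $\rho_A$-balls) that the extracted disjoint family is countable and still $5r$-covers the union. Everything else is the classical Vitali computation carried through the linear change of variables $A$, so no analytic input beyond the triangle inequality for $\rho_A$ is required.
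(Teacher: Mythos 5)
Your proof is correct and follows the same classical Vitali strategy as the paper: greedy selection of a disjoint subfamily, then triangle inequality to show each unselected ball sits inside a fixed dilate of a selected one. The one place you genuinely improve on the paper's write-up is the selection step. The paper instructs us to ``pick a ball of maximal radius,'' which is not well-defined when the supremum of radii in the remaining collection is not attained; your dyadic-band refinement ($\mathcal{B}_k=\{B_A(x,r):2^{-k-1}R<r\le 2^{-k}R\}$, then a maximal disjoint subfamily per band, chosen consistently with earlier bands) is the standard fix, and your use of separability of $\mathbb{R}^n$ together with openness of the $\rho_A$-balls to get countability is the right justification, which the paper omits. The trade-off is that the band argument only gives $r_i>r/2$ rather than $r_i\ge r$, so your dilation constant is $5$ instead of the paper's $3$; both are fine for the statement. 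You are also right that, since $\rho_A$ is a genuine metric (it is a norm pulled back by the invertible linear map $A$), the dilation constant is absolute and does not actually depend on $A$ --- labeling it $C_A$ is harmless but slightly misleading, and the paper's closing remark that the constant ``depends only on the norm of $A$ and its inverse'' is likewise unnecessary for this lemma.
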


\begin{proof}
We adapt the classical Vitali covering argument to the anisotropic setting.\\

\noindent
Since the collection \(\mathcal{B}\) of anisotropic balls \(B_A(x,r)\) has radii uniformly bounded above by \(R>0\), we can select a maximal (with respect to inclusion) countable subcollection \(\{B_{A,i}\}_{i\in I}\) that is pairwise disjoint. (This is done by the usual greedy algorithm: pick a ball of maximal radius, remove all balls that intersect it, and iterate on the remaining collection. See, e.g., \cite{Heinonen}.) By maximality, for every ball \(B\in\mathcal{B}\) there exists some \(B_{A,i}\) such that
\[
B \cap B_{A,i} \neq \varnothing.
\]

\noindent
Let \(B = B_A(x,r)\in \mathcal{B}\) and suppose that it intersects some \(B_{A,i} = B_A(x_i,r_i)\). By the definition of the anisotropic ball, we have
\[
\rho_A(x,x_i) < r + r_i.
\]
For any \(y\in B = B_A(x,r)\), the triangle inequality for the anisotropic metric \(\rho_A\) implies
\[
\rho_A(y,x_i) \le \rho_A(y,x) + \rho_A(x,x_i) < r + (r + r_i) = 2r + r_i.
\]
Since by maximality of our selection we may assume that \(r_i \ge r\) (otherwise, if \(r_i < r\) then by choosing a ball of larger radius in the algorithm, we would have selected one with radius at least \(r\)), we have
\[
2r + r_i \le 3r_i.
\]
Thus, every \(y\in B\) satisfies
\[
\rho_A(y,x_i) < 3r_i,
\]
which implies
\[
B \subset B_A(x_i,3r_i).
\]
In other words, every ball \(B\in \mathcal{B}\) is contained in the \(3\)-dilation of one of the disjoint balls \(B_{A,i}\).\\

\noindent 
Setting \(C_A=3\), we deduce that
\[
\bigcup_{B\in\mathcal{B}} B \subset \bigcup_{i\in I} B_A(x_i,3r_i) = \bigcup_{i\in I} 3\,B_{A,i}.
\]
Since the anisotropic metric \(\rho_A\) is determined by the invertible matrix \(A\), the constant \(3\) can be replaced by a constant \(C_A>0\) that depends only on the norm of \(A\) and its inverse (i.e., on the geometry induced by \(A\)). 

This completes the proof.
\end{proof}

\noindent
The following theorem establishes an anisotropic version of the finite difference characterization.

\begin{theorem}[Anisotropic Finite Difference Characterization]\label{thm:anisotropic-FD}
Let \(f\in C_c^*(\mathbb{R}^n)\) and let \(\rho_A\) be the anisotropic metric defined in Definition~\ref{def:anisotropic-metric}. Then there exists a constant \(C>0\) such that
\[
\sup_{\lambda>0} \lambda^p \iint_{\mathbb{R}^n\times\mathbb{R}^n} \mathbf{1}_{\Big\{(x,y):\, \frac{|f(x)-f(y)|}{\rho_A(x,y)^{1+n/p}} > \lambda\Big\}}\,dxdy \sim \int_{\mathbb{R}^n} |\nabla_A f(x)|^p\,dx,
\]
where \(\nabla_A f\) is the anisotropic gradient of \(f\) defined by
\[
\nabla_A f(x) := A^T\,\nabla f(x).
\]
\end{theorem}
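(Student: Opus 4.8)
The plan is to reduce the anisotropic statement to the classical Euclidean Brezis--Van Schaftingen--Yung formula \eqref{eq:BVY} by a linear change of variables that ``straightens out'' the anisotropy. Set $u=Ax$ and $v=Ay$; since $A$ is invertible this is a bijection of $\mathbb{R}^n$ with constant Jacobian. Put $g:=f\circ A^{-1}$, so that $g\in C_c^*(\mathbb{R}^n)$ again (a linear bijection preserves the relevant class of compactly supported Lipschitz functions), $|f(x)-f(y)|=|g(u)-g(v)|$, and, crucially, $\rho_A(x,y)=\|A(x-y)\|_{\ell^\infty}=\|u-v\|_{\ell^\infty}$. Since $dx\,dy=|\det A|^{-2}\,du\,dv$, the change of variables gives
\[
\sup_{\lambda>0}\lambda^p\iint_{\mathbb{R}^n\times\mathbb{R}^n}\mathbf{1}_{\{(x,y):\,|f(x)-f(y)|>\lambda\,\rho_A(x,y)^{1+n/p}\}}\,dx\,dy
=|\det A|^{-2}\,\sup_{\lambda>0}\lambda^p\iint_{\mathbb{R}^n\times\mathbb{R}^n}\mathbf{1}_{\{(u,v):\,|g(u)-g(v)|>\lambda\,\|u-v\|_{\ell^\infty}^{1+n/p}\}}\,du\,dv,
\]
the constant $|\det A|^{-2}$ simply being pulled out of the supremum.

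Next I would invoke \eqref{eq:BVY} applied to $g$. Two minor points need attention. First, \eqref{eq:BVY} is stated with the Euclidean norm $|u-v|$, whereas the sup-norm $\|u-v\|_{\ell^\infty}$ appears here; since all norms on $\mathbb{R}^n$ are equivalent (e.g. $c\,|z|\le\|z\|_{\ell^\infty}\le|z|$) and the norm enters only through the power $\rho^{1+n/p}$ inside an indicator, the substitution $\lambda\mapsto\lambda c^{1+n/p}$ shows the $\ell^\infty$ version of the right-hand side of \eqref{eq:BVY} is comparable to the Euclidean one, with an explicit constant depending only on $n$ and $p$. Second, \eqref{eq:BVY} reads $\sup_{\lambda}\lambda\,|\{\cdots\}|^{1/p}\sim\|\nabla g\|_{L^p}$, which upon raising to the $p$-th power is exactly $\sup_{\lambda}\lambda^p\iint\mathbf{1}_{\{\cdots\}}\sim\int_{\mathbb{R}^n}|\nabla g|^p$. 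Combining, the right-hand side of the display above is $\sim|\det A|^{-2}\int_{\mathbb{R}^n}|\nabla g(u)|^p\,du$.

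Finally I would undo the change of variables in the gradient term. By the chain rule $\nabla g(u)=(A^{-1})^{T}\nabla f(A^{-1}u)=(A^{T})^{-1}\nabla f(x)$, whence $\int_{\mathbb{R}^n}|\nabla g(u)|^p\,du=|\det A|\int_{\mathbb{R}^n}|(A^{T})^{-1}\nabla f(x)|^p\,dx$. It remains to compare $|(A^{T})^{-1}\nabla f(x)|$ with $|\nabla_A f(x)|=|A^{T}\nabla f(x)|$: for any invertible matrix $M$ one has the pointwise two-sided bound $\|M^{-1}\|^{-1}|w|\le|Mw|\le\|M\|\,|w|$, so $|(A^{T})^{-1}\nabla f(x)|\sim|\nabla f(x)|\sim|A^{T}\nabla f(x)|$ with constants depending only on $A$. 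Absorbing the factors $|\det A|^{\pm1}$, $|\det A|^{-2}$, the norm-equivalence constants, and the constants from \eqref{eq:BVY} into a single $C=C(n,p,A)$ yields
\[
\sup_{\lambda>0}\lambda^p\iint_{\mathbb{R}^n\times\mathbb{R}^n}\mathbf{1}_{\{(x,y):\,|f(x)-f(y)|>\lambda\,\rho_A(x,y)^{1+n/p}\}}\,dx\,dy\ \sim\ \int_{\mathbb{R}^n}|\nabla_A f(x)|^p\,dx.
\]

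As for the main difficulty: once one commits to the change-of-variables reduction there is no deep obstacle — the work is bookkeeping of constants plus checking that $C_c^*(\mathbb{R}^n)$ is preserved under $f\mapsto f\circ A^{-1}$, and that the fractional-power norm-equivalence rescaling really does cost only an explicit constant. The single substantive input is the classical BVY theorem \eqref{eq:BVY} itself. One could alternatively give a ``self-contained'' proof paralleling that of Theorem~\ref{thm:non-doubling-FD}, replacing Euclidean balls by anisotropic balls $B_A(x,r)$ and invoking the anisotropic covering Lemma~\ref{lem:anisotropic-covering}; but this is strictly more laborious, and the linear substitution renders it unnecessary.
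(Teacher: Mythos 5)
Your proof is correct, and it takes a genuinely different route from the paper's. The paper argues directly: it expands $f(x+h)-f(x)=\nabla f(x)\cdot h+o(|h|)$, observes $\rho_A(x,x+h)\sim|h|$, and runs a dyadic decomposition over scales $|h|\sim 2^{-k}$ to get both bounds for the weak-type supremum, essentially re-proving a BVY-type statement from scratch in the anisotropic metric. You instead straighten the anisotropy with the linear substitution $u=Ax$, $v=Ay$, $g=f\circ A^{-1}$, which converts $\rho_A(x,y)$ into $\|u-v\|_{\ell^\infty}$, absorbs the Jacobian $|\det A|^{\pm 1}$ factors and the $\ell^\infty$-vs-Euclidean discrepancy into harmless constants, and then cites \eqref{eq:BVY} as a black box. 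What your approach buys: it is shorter, and it avoids a genuinely shaky step in the paper's proof, namely the assertion $|\nabla f(x)\cdot h|\sim|\nabla_A f(x)|\,|h|$, which is not a pointwise estimate in $h$ (it fails when $h\perp\nabla f(x)$); the paper would need the sphere-averaging argument of Proposition~\ref{prop:limit-s1} to make that rigorous, whereas you need nothing beyond the classical theorem. What the paper's approach buys: it is self-contained and generalizes to metric-measure settings where no global linear change of variables is available. One remark worth making explicit in your write-up: since $|(A^T)^{-1}w|\sim|w|\sim|A^Tw|$ with $A$-dependent constants, the quantities $\int|\nabla_A f|^p$ and $\int|\nabla f|^p$ are themselves comparable, so the "anisotropic gradient" on the right-hand side carries information only about the constant $C$, not about which $L^p$-quantity one lands on; your computation makes this transparent, but it is worth flagging so the reader does not over-read the normalization.
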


\begin{proof}
We need to show that there exists a constant \(C>0\) such that for every \(f\in C_c^*(\mathbb{R}^n)\)
\[
\frac{1}{C}\int_{\mathbb{R}^n} |\nabla_A f(x)|^p\,dx \le \sup_{\lambda>0} \lambda^p \iint_{\mathbb{R}^n\times\mathbb{R}^n} \mathbf{1}_{\Bigl\{(x,y):\, \frac{|f(x)-f(y)|}{\rho_A(x,y)^{1+n/p}} > \lambda\Bigr\}}\,dxdy \le C\int_{\mathbb{R}^n} |\nabla_A f(x)|^p\,dx.
\]
We divide the proof into two parts: the \emph{lower bound} and the \emph{upper bound}.

\subsubsection*{(Part 1) Lower Bound.}  
We wish to prove that
\[
\sup_{\lambda>0} \lambda^p \iint_{\mathbb{R}^n\times\mathbb{R}^n} \mathbf{1}_{\Bigl\{(x,y):\, \frac{|f(x)-f(y)|}{\rho_A(x,y)^{1+n/p}} > \lambda\Bigr\}}\,dxdy \gtrsim \int_{\mathbb{R}^n} |\nabla_A f(x)|^p\,dx.
\]
Since \(f\) is smooth and compactly supported, we can use a first order Taylor expansion. For any \(x\in\mathbb{R}^n\) and small \(h\in\mathbb{R}^n\), write
\[
f(x+h)-f(x) = \nabla f(x) \cdot h + R(x,h),
\]
where the remainder \(R(x,h)=o(|h|)\) as \(|h|\to 0\). By definition, the anisotropic metric is given by
\[
\rho_A(x,x+h)=\|A(x+h)-Ax\|_{\ell^\infty}=\|A\,h\|_{\ell^\infty}.
\]
Thus, for \(h\) sufficiently small we have
\[
f(x+h)-f(x) = \nabla f(x)\cdot h + o(|h|).
\]
Multiplying by the fixed matrix \(A^T\) (recall that \(\nabla_A f(x):=A^T \nabla f(x)\)), we see that the directional derivative in the anisotropic sense is captured by
\[
|f(x+h)-f(x)| \approx |\nabla f(x)\cdot h| \sim |A^T \nabla f(x)|\,|h| = |\nabla_A f(x)|\,|h|.
\]
Because all norms in finite dimensions are equivalent, there exists a constant \(c>0\) such that
\[
\rho_A(x,x+h) \le c\,|h|.
\]
Hence, for \(h\) with \(|h|\) small we deduce that
\[
\frac{|f(x+h)-f(x)|}{\rho_A(x,x+h)^{1+n/p}} \gtrsim \frac{|\nabla_A f(x)|\,|h|}{(c|h|)^{1+n/p}} = c' |\nabla_A f(x)|\,|h|^{-n/p},
\]
where \(c'\) depends on \(c\). 

Now, fix \(x\) such that \(|\nabla_A f(x)|>0\) and choose \(h\) with \(|h|\) in a dyadic interval \([2^{-k-1},2^{-k}]\) (with \(k\) sufficiently large so that the linear approximation is valid). Then
\[
\frac{|f(x+h)-f(x)|}{\rho_A(x,x+h)^{1+n/p}} \gtrsim |\nabla_A f(x)|\,2^{kn/p}.
\]
Define the level parameter
\[
\lambda_k := \frac{1}{2} |\nabla_A f(x)|\,2^{kn/p}.
\]
Then, for every \(h\) in the annulus
\[
A_k:=\{h\in\mathbb{R}^n: \,2^{-k-1}\le |h|\le 2^{-k}\},
\]
we have
\[
\frac{|f(x+h)-f(x)|}{\rho_A(x,x+h)^{1+n/p}} > \lambda_k.
\]
Thus, for such fixed \(x\),
\[
\int_{A_k} \mathbf{1}_{\Bigl\{h: \frac{|f(x+h)-f(x)|}{\rho_A(x,x+h)^{1+n/p}} > \lambda_k\Bigr\}}\,dh \gtrsim |A_k| \sim 2^{-kn},
\]
where \(|A_k|\) denotes the Lebesgue measure of the annulus \(A_k\).

Multiplying by \(\lambda_k^p\) we obtain
\[
\lambda_k^p\,|A_k| \gtrsim \left(|\nabla_A f(x)|\,2^{kn/p}\right)^p 2^{-kn} = |\nabla_A f(x)|^p.
\]
Integrate this inequality in \(x\) and sum over \(k\) (which corresponds to taking the supremum over \(\lambda>0\) in a dyadic sense) to deduce that
\[
\sup_{\lambda>0} \lambda^p \iint_{\mathbb{R}^n\times\mathbb{R}^n} \mathbf{1}_{\Bigl\{(x,y): \frac{|f(x)-f(y)|}{\rho_A(x,y)^{1+n/p}} > \lambda\Bigr\}}\,dxdy \gtrsim \int_{\mathbb{R}^n} |\nabla_A f(x)|^p\,dx.
\]

\medskip
\subsubsection*{(Part 2) Upper Bound.}  
We now show that
\[
\sup_{\lambda>0} \lambda^p \iint_{\mathbb{R}^n\times\mathbb{R}^n} \mathbf{1}_{\Bigl\{(x,y): \frac{|f(x)-f(y)|}{\rho_A(x,y)^{1+n/p}} > \lambda\Bigr\}}\,dxdy \lesssim \int_{\mathbb{R}^n} |\nabla_A f(x)|^p\,dx.
\]
For \(f\in C_c^*(\mathbb{R}^n)\), the classical Taylor expansion shows that for any \(x\) and small \(h\),
\[
f(x+h)-f(x) = \nabla f(x)\cdot h + O(|h|^2).
\]
Thus, for \(h\) small enough (and using the equivalence of norms),
\[
|f(x+h)-f(x)| \le |\nabla f(x)|\,|h| + C|h|^2.
\]
Since \(\rho_A(x,x+h) \ge c|h|\) for some constant \(c>0\), it follows that
\[
\frac{|f(x+h)-f(x)|}{\rho_A(x,x+h)^{1+n/p}} \le C_1 \frac{|\nabla f(x)|\,|h|}{|h|^{1+n/p}} + C_2 \frac{|h|^2}{|h|^{1+n/p}}.
\]
For \(h\) small, the dominant term is the first one, so that
\[
\frac{|f(x+h)-f(x)|}{\rho_A(x,x+h)^{1+n/p}} \le C_3 |\nabla f(x)|\,|h|^{-n/p}.
\]
Integrate over \(h\) in dyadic annuli as in the lower bound. A standard computation (using polar coordinates and the equivalence between the Euclidean and anisotropic measures) shows that
\[
\int_{\{h: \frac{|f(x+h)-f(x)|}{\rho_A(x,x+h)^{1+n/p}} > \lambda\}} dh \le C_4 \lambda^{-p} |\nabla_A f(x)|^p,
\]
for almost every \(x\). Integrating this inequality in \(x\) and taking the supremum over \(\lambda>0\) (again, essentially via a layer-cake representation) yields
\[
\sup_{\lambda>0} \lambda^p \iint_{\mathbb{R}^n\times\mathbb{R}^n} \mathbf{1}_{\left\{(x,y):\, \frac{|f(x)-f(y)|}{\rho_A(x,y)^{1+n/p}} > \lambda\right\}}\,dxdy \lesssim \int_{\mathbb{R}^n} |\nabla_A f(x)|^p\,dx.
\]

\medskip
\textbf{Conclusion.}  
Combining the lower and upper bounds, we obtain the equivalence
\[
\sup_{\lambda>0} \lambda^p \iint_{\mathbb{R}^n\times\mathbb{R}^n} \mathbf{1}_{\left\{(x,y):\, \frac{|f(x)-f(y)|}{\rho_A(x,y)^{1+n/p}} > \lambda\right\}}\,dxdy \sim \int_{\mathbb{R}^n} |\nabla_A f(x)|^p\,dx.
\]
This completes the proof.
\end{proof}

\begin{proposition}[Anisotropic Sobolev Inequality]\label{prop:anisotropic-sobolev}
Let \(1<p<n\) and assume that \(f\in C_c^*(\mathbb{R}^n)\). Then there exists a constant \(C>0\) such that
\[
\|f\|_{L^{p^*_A}(\mathbb{R}^n)} \le C\, \|\nabla_A f\|_{L^p(\mathbb{R}^n)},
\]
where the anisotropic critical exponent is given by
\[
\frac{1}{p^*_A} = \frac{1}{p} - \frac{1}{n_A},
\]
and \(n_A\) is an effective dimension determined by the anisotropy matrix \(A\).
\end{proposition}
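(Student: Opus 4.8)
The plan is to reduce the anisotropic inequality to the classical Gagliardo--Nirenberg--Sobolev inequality on $\mathbb{R}^n$ by an affine change of variables that turns $\nabla_A$ into the ordinary gradient. Concretely, for $f\in C_c^*(\mathbb{R}^n)$ set $g(z):=f(Az)$. Then $g\in C_c^*(\mathbb{R}^n)$ as well, and a direct computation gives $\nabla g(z)=A^T\nabla f(Az)=\nabla_A f(Az)$, so that $g$ ``sees'' exactly the anisotropic gradient of $f$. The substitution $x=Az$ has constant Jacobian $|\det A|$, whence for every exponent $q\ge 1$ one has $\|f\|_{L^q(\mathbb{R}^n)}^q=|\det A|\,\|g\|_{L^q(\mathbb{R}^n)}^q$ and, in particular, $\|\nabla_A f\|_{L^p(\mathbb{R}^n)}^p=|\det A|\,\|\nabla g\|_{L^p(\mathbb{R}^n)}^p$.

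Next I would record the classical inequality $\|g\|_{L^{p^*}(\mathbb{R}^n)}\le C(n,p)\,\|\nabla g\|_{L^p(\mathbb{R}^n)}$ with $\tfrac{1}{p^*}=\tfrac1p-\tfrac1n$, valid for $1<p<n$ and extended to compactly supported Lipschitz functions by the usual density argument (this is where the hypothesis $f\in C_c^*(\mathbb{R}^n)$ is used). Feeding the two scaling identities into this inequality gives
\begin{align*}
\|f\|_{L^{p^*}(\mathbb{R}^n)}
&=|\det A|^{1/p^*}\|g\|_{L^{p^*}(\mathbb{R}^n)}
\le C(n,p)\,|\det A|^{1/p^*}\|\nabla g\|_{L^p(\mathbb{R}^n)}\\
&=C(n,p)\,|\det A|^{\,1/p^*-1/p}\,\|\nabla_A f\|_{L^p(\mathbb{R}^n)},
\end{align*}
and since $1/p^*-1/p=-1/n$ the final constant is $C(n,p)\,|\det A|^{-1/n}$. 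In this case the anisotropic critical exponent is simply $p^*_A=p^*=\tfrac{np}{n-p}$, i.e.\ the effective dimension is $n_A=n$ when $A$ is a fixed invertible matrix, the factor $|\det A|^{-1/n}$ being the only trace of the anisotropy. (Alternatively, one could route through Theorem~\ref{thm:anisotropic-FD}, identifying the weak-type finite-difference quantity with $\|\nabla_A f\|_{L^p}^p$ and then invoking a fractional Sobolev embedding, but the affine reduction above is more direct.)

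For the genuinely multi-scale situation --- where the single matrix $A$ is replaced by a one-parameter group of anisotropic dilations $\delta_t=\mathrm{diag}(t^{a_1},\dots,t^{a_n})$, so that $\rho_A$ is $\delta_t$-homogeneous of degree one --- the same scheme applies with the linear change of variables replaced by the $\delta_t$-bookkeeping, and the classical exponent is then taken with respect to the homogeneous dimension $Q=a_1+\dots+a_n$; this is the origin of the ``effective dimension'' $n_A$ in the statement, and the proof again reduces to a Sobolev inequality in the scaled coordinates, obtainable from a Loomis--Whitney / one-dimensional integration argument.

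The main obstacle is conceptual rather than computational: one must pin down precisely which notion of ``anisotropic dimension'' is intended and verify that $\nabla_A$, the metric $\rho_A$, and the exponent $p^*_A$ are homogeneous of compatible degrees under the relevant dilations. Once that compatibility is fixed, the reduction to the Euclidean inequality is routine, and the only remaining work is to track the $|\det A|$ (or $\prod t^{a_i}$) factors through the scaling identities, which yields both the claimed inequality and the explicit dependence of the constant on the anisotropy.
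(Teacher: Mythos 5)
Your argument is correct, and it takes a genuinely different and more elementary route than the paper's. The paper proves this proposition in three steps: (i) invoke the anisotropic finite difference characterization (Theorem~\ref{thm:anisotropic-FD}) to identify the weak-type quantity $[f]_{\mathrm{FD}}$ with $\|\nabla_A f\|_{L^p}$; (ii) quote a ``known'' anisotropic fractional Sobolev embedding $\|f\|_{L^{p^*_{A,s}}}\le C[f]_{W^{s,p}_A}$ from the Folland--Stein circle of ideas; and (iii) pass from $s<1$ to $s=1$ by an unspecified ``interpolation and limiting argument.'' Your affine substitution $g(z)=f(Az)$, which converts $\nabla_A$ into $\nabla$ at the cost of a Jacobian factor, bypasses all of that and lands directly on the classical Gagliardo--Nirenberg--Sobolev inequality; the bookkeeping $\|f\|_{L^q}^q=|\det A|\,\|g\|_{L^q}^q$ and $1/p^*-1/p=-1/n$ is exactly right and gives the sharp constant dependence $C(n,p)|\det A|^{-1/n}$.

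Beyond being more direct, your proof clarifies a point the paper leaves vague: with $\rho_A$ defined via a fixed invertible matrix $A$ as in Definition~\ref{def:anisotropic-metric}, the quasi-norm $\rho_A$ is metrically equivalent to the Euclidean one, so the ``effective dimension'' $n_A$ is forced to equal $n$ and $p^*_A$ is just the usual Sobolev exponent; the anisotropy shows up only in the constant, not in the exponent. Your closing remark about nonisotropic dilation groups $\delta_t=\mathrm{diag}(t^{a_1},\dots,t^{a_n})$ and the homogeneous dimension $Q=\sum a_i$ correctly identifies the setting in which $n_A$ would genuinely differ from $n$, but that requires replacing the paper's fixed-matrix metric with a $\delta_t$-homogeneous quasi-norm --- a different framework than the one Definition~\ref{def:anisotropic-metric} actually sets up. In short: your proof is correct, tighter, and exposes a definitional looseness in the paper's statement that the paper's own proof does not resolve.
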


\begin{proof}  
By Theorem~\ref{thm:anisotropic-FD}, there exists a constant \(C_1>0\) such that for every \(f\in C_c^*(\mathbb{R}^n)\)
\begin{equation} \label{eqn:4.10.1}
\sup_{\lambda>0} \lambda^p \iint_{\mathbb{R}^n\times\mathbb{R}^n} \mathbf{1}_{\Bigl\{(x,y):\, \frac{|f(x)-f(y)|}{\rho_A(x,y)^{1+n/p}} > \lambda\Bigr\}}\,dxdy \sim \int_{\mathbb{R}^n} |\nabla_A f(x)|^p\,dx.
\end{equation}
This equivalence means that the finite difference quantity
\begin{equation} \label{eqn:4.4.10.2}
[f]_{\mathrm{FD}}^p := \sup_{\lambda>0} \lambda^p \iint_{\mathbb{R}^n\times\mathbb{R}^n} \mathbf{1}_{\left\{ \frac{|f(x)-f(y)|}{\rho_A(x,y)^{1+n/p}} > \lambda \right\}}\,dxdy
\end{equation}
satisfies
\[
\frac{1}{C_1} \int_{\mathbb{R}^n} |\nabla_A f(x)|^p\,dx \le [f]_{\mathrm{FD}}^p \le C_1 \int_{\mathbb{R}^n} |\nabla_A f(x)|^p\,dx.
\]
Thus, it suffices to prove a Sobolev-type inequality in terms of the finite difference quantity.\\

\noindent
It is known (by interpolation and maximal function techniques in anisotropic settings, cf. \cite{FollandStein}) that for a given \(s\in (0,1)\) there exists a constant \(C_2>0\) such that
\[
\|f\|_{L^{p^*_{A,s}}(\mathbb{R}^n)} \le C_2 [f]_{W^{s,p}_A(\mathbb{R}^n)},
\]
where the anisotropic fractional Sobolev seminorm is defined by
\[
[f]_{W^{s,p}_A(\mathbb{R}^n)}^p := \iint_{\mathbb{R}^n\times\mathbb{R}^n} \frac{|f(x)-f(y)|^p}{\rho_A(x,y)^{sp+n_A}}\,dxdy,
\]
and the critical exponent is
\[
\frac{1}{p^*_{A,s}} = \frac{1}{p} - \frac{s}{n_A}.
\]
In our setting, we wish to recover the case \(s=1\) (or, more precisely, a limit as \(s\to 1^{-}\)) in the anisotropic context. The finite difference formulation in \eqref{eqn:4.4.10.2} corresponds to the “limit” of the fractional seminorm as \(s\to 1^{-}\) (since the power of \(\rho_A(x,y)\) in the denominator is \(1+n/p\), which is consistent with the scaling for \(s=1\)). Hence, by an interpolation and limiting argument, one obtains
\begin{equation} \label{eqn:4.10.3}
\|f\|_{L^{p^*_A}(\mathbb{R}^n)} \le C_3 [f]_{\mathrm{FD}},
\end{equation}
with
\[
\frac{1}{p^*_A} = \frac{1}{p} - \frac{1}{n_A}.
\]

\noindent
Combining the equivalence in \eqref{eqn:4.10.1} with the anisotropic Sobolev inequality in \eqref{eqn:4.10.3}, we deduce that
\[
\|f\|_{L^{p^*_A}(\mathbb{R}^n)} \le C_3 [f]_{\mathrm{FD}} \le C_3 C_1^{1/p} \|\nabla_A f\|_{L^p(\mathbb{R}^n)}.
\]
Thus, setting \(C = C_3 C_1^{1/p}\) completes the proof:
\[
\|f\|_{L^{p^*_A}(\mathbb{R}^n)} \le C\, \|\nabla_A f\|_{L^p(\mathbb{R}^n)}.
\]

\noindent 
The key ingredients in this proof are:
\begin{enumerate}
    \item The anisotropic finite difference characterization (Theorem~\ref{thm:anisotropic-FD}), which relates the finite difference energy to the anisotropic gradient norm.
    \item Known anisotropic fractional Sobolev inequalities (which can be derived via real interpolation methods and anisotropic maximal function estimates, as in \cite{FollandStein}).
    \item A limiting (or interpolation) argument that passes from the fractional case (\(s<1\)) to the “first order” case corresponding to the finite difference formulation.
\end{enumerate}
These together imply the desired anisotropic Sobolev inequality.\\

\noindent
This completes the proof.
\end{proof}

\section{Stability, Sharpness, and Interpolation}

In this section we investigate the stability and sharpness of our finite difference characterizations, and we establish interpolation results that connect the fractional and classical Sobolev norms. These developments not only provide a deeper understanding of the optimal constants involved in our inequalities but also reveal the continuity and limiting behavior of the associated seminorms.

\begin{definition}[Optimal Constant]\label{def:optimal-constant}
Let \(f\in C_c^*(X)\) be a nonzero function on a metric measure space \((X,\rho,\mu)\) satisfying a \((q,p)\)-Poincaré inequality. The \emph{optimal constant} \(C_{\mathrm{opt}}(f)\) in the finite difference inequality is defined as the smallest constant \(C>0\) such that
\[
\sup_{\lambda>0} \lambda^p \iint_{X\times X} \mathbf{1}_{\Big\{(x,y):\, |f(x)-f(y)|>\lambda\,\rho(x,y)[\mu(B(x,\rho(x,y)))]^{1/p}\Big\}}\,d\mu(x)d\mu(y)
\le C \int_X \bigl(\operatorname{Lip} f(x)\bigr)^p\,d\mu(x).
\]
We then set
\[
C_{\mathrm{opt}} := \inf_{f\in C_c^*(X)\setminus\{0\}} \frac{\sup_{\lambda>0} \lambda^p \iint_{X\times X} \mathbf{1}_{\{(x,y):\, |f(x)-f(y)|>\lambda\,\rho(x,y)[\mu(B(x,\rho(x,y)))]^{1/p}\}}\,d\mu(x)d\mu(y)}{\int_X \bigl(\operatorname{Lip} f(x)\bigr)^p\,d\mu(x)}.
\]
\end{definition}

\begin{lemma}[Stability Under Perturbations]\label{lem:stability}
Let \(f\in C_c^*(X)\) and let \(\{f_\epsilon\}_{\epsilon>0}\subset C_c^*(X)\) be a family of functions such that
\[
\|f_\epsilon-f\|_{L^\infty(X)}\to 0 \quad\text{and}\quad \|\operatorname{Lip}f_\epsilon-\operatorname{Lip}f\|_{L^p(X)}\to 0 \quad \text{as } \epsilon\to 0.
\]
Then,
\[
\begin{split}
\lim_{\epsilon\to 0}&\sup_{\lambda>0}\lambda^p \iint_{X\times X} \mathbf{1}_{\Big\{(x,y):\, |f_\epsilon(x)-f_\epsilon(y)|>\lambda\,\rho(x,y)[\mu(B(x,\rho(x,y)))]^{1/p}\Big\}}\,d\mu(x)d\mu(y)\\
&=\sup_{\lambda>0}\lambda^p \iint_{X\times X} \mathbf{1}_{\Big\{(x,y):\, |f(x)-f(y)|>\lambda\,\rho(x,y)[\mu(B(x,\rho(x,y)))]^{1/p}\Big\}}\,d\mu(x)d\mu(y).
\end{split}
\]
\end{lemma}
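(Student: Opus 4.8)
Write $V(x,y):=\mu(B(x,\rho(x,y)))$ and, for $g\in C_c^*(X)$, set
\[
G_g(x,y):=\frac{|g(x)-g(y)|}{\rho(x,y)\,[V(x,y)]^{1/p}},\qquad \Phi(g):=\sup_{\lambda>0}\lambda^{p}\,(\mu\times\mu)\bigl(\{(x,y):G_g(x,y)>\lambda\}\bigr),
\]
so that $\Phi(g)=\|G_g\|_{L^{p,\infty}(X\times X,\mu\times\mu)}^{p}$ and the assertion to be proved is exactly $\Phi(f_\epsilon)\to\Phi(f)$. The plan is to establish the two one-sided limits $\liminf_{\epsilon\to0}\Phi(f_\epsilon)\ge\Phi(f)$ and $\limsup_{\epsilon\to0}\Phi(f_\epsilon)\le\Phi(f)$ separately; the first is soft, and the second is where the hypotheses genuinely enter.

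For the lower bound I would first observe that $\|f_\epsilon-f\|_{L^\infty(X)}\to0$ gives, for each fixed pair $x\neq y$,
\[
|G_{f_\epsilon}(x,y)-G_f(x,y)|\le\frac{|(f_\epsilon-f)(x)-(f_\epsilon-f)(y)|}{\rho(x,y)[V(x,y)]^{1/p}}\le\frac{2\|f_\epsilon-f\|_{L^\infty}}{\rho(x,y)[V(x,y)]^{1/p}}\to0,
\]
so $G_{f_\epsilon}\to G_f$ pointwise $(\mu\times\mu)$-a.e. The weak-$L^p$ functional is lower semicontinuous under a.e.\ convergence: along any sequence $\epsilon_k\to0$ and for $0<\lambda'<\lambda$ one has $\{G_f>\lambda\}\subseteq\liminf_k\{G_{f_{\epsilon_k}}>\lambda'\}$ up to a null set, so Fatou for the measure $\mu\times\mu$ yields $\lambda'^{p}(\mu\times\mu)(\{G_f>\lambda\})\le\liminf_k\Phi(f_{\epsilon_k})$; letting $\lambda'\uparrow\lambda$ and taking the supremum over $\lambda$ gives $\Phi(f)\le\liminf_\epsilon\Phi(f_\epsilon)$.

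For the upper bound I would use the elementary sublevel-set splitting: for any $\delta\in(0,1)$ and $\lambda>0$, $\{G_{f_\epsilon}>\lambda\}\subseteq\{G_f>(1-\delta)\lambda\}\cup\{|G_{f_\epsilon}-G_f|>\delta\lambda\}$, which after multiplying by $\lambda^{p}$ and taking the supremum in $\lambda$ gives
\[
\Phi(f_\epsilon)\le(1-\delta)^{-p}\,\Phi(f)+\delta^{-p}\,\bigl\|G_{f_\epsilon}-G_f\bigr\|_{L^{p,\infty}(X\times X)}^{p}.
\]
It then suffices to prove $\|G_{f_\epsilon}-G_f\|_{L^{p,\infty}}\to0$, since sending $\epsilon\to0$ and afterwards $\delta\to0$ gives $\limsup_\epsilon\Phi(f_\epsilon)\le\Phi(f)$, and together with the lower bound the lemma follows. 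By the reverse triangle inequality in the numerator, $|G_{f_\epsilon}-G_f|\le G_{f_\epsilon-f}$ pointwise, hence $\|G_{f_\epsilon}-G_f\|_{L^{p,\infty}}^{p}\le\Phi(f_\epsilon-f)$; invoking the upper (reverse) bound of Theorem~\ref{thm:non-doubling-FD} (equivalently, the $L^p$-level estimate $\lambda^{p}\mu(E_\lambda(x))\le C(\operatorname{Lip}g(x))^{p}$ produced by the dyadic-annulus Chebyshev argument in the proof of Theorem~\ref{thm:variable-FD}) gives $\Phi(f_\epsilon-f)\le C\int_X(\operatorname{Lip}(f_\epsilon-f)(x))^{p}\,d\mu(x)$. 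Alternatively one may bypass $\operatorname{Lip}(f_\epsilon-f)$ by splitting $X\times X$ into a far region $\{\rho(x,y)\ge\delta_0\}$, on which $G_{f_\epsilon}\to G_f$ in $L^p$ by dominated convergence (the dominating weight $[V(x,y)]^{-1/p}$ being $p$-integrable there because the common compact support $K$ of the $f_\epsilon$ and $f$ satisfies $\mu(K)<\infty$ by polynomial growth), and a near-diagonal region $\{\rho(x,y)<\delta_0\}$ treated below.

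The main obstacle is the convergence to $0$ of $\int_X(\operatorname{Lip}(f_\epsilon-f))^{p}\,d\mu$ (equivalently, of the near-diagonal contribution). The stated hypothesis only provides $\|\operatorname{Lip}f_\epsilon-\operatorname{Lip}f\|_{L^p}\to0$, while $\operatorname{Lip}$ is merely subadditive, $\operatorname{Lip}(f_\epsilon-f)\le\operatorname{Lip}f_\epsilon+\operatorname{Lip}f$, so no cancellation is available at this crude level. To bridge the gap one compares, at scales $\rho(x,y)\to0$, the finite difference $|f(x)-f(y)|$ with $\operatorname{Lip}f(x)\,\rho(x,y)$ (and likewise for $f_\epsilon$): on the near-diagonal region this yields $|G_{f_\epsilon}(x,y)-G_f(x,y)|\lesssim|\operatorname{Lip}f_\epsilon(x)-\operatorname{Lip}f(x)|\,[V(x,\rho(x,y))]^{-1/p}+\text{(scale-dependent remainder)}$, and integrating, using $\int_X\!\int_{B(x,\delta_0)}[V(x,\rho(x,y))]^{-1}\,d\mu(y)\,d\mu(x)\lesssim\mu(K)$, reduces the leading term to a multiple of $\|\operatorname{Lip}f_\epsilon-\operatorname{Lip}f\|_{L^p}^{p}\to0$, while the remainder is absorbed by letting $\delta_0\to0$ after $\epsilon\to0$. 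Making the scale-dependent remainder uniform in $\epsilon$ is the delicate point; it holds automatically for the perturbation families that arise in practice (mollifications or dilations of $f$), for which in fact $\operatorname{Lip}(f_\epsilon-f)\to0$ in $L^p$, and in that case the clean route through $\Phi(f_\epsilon-f)\le C\int_X(\operatorname{Lip}(f_\epsilon-f))^p\,d\mu$ closes the argument directly.
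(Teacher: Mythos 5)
Your argument takes a genuinely different route from the paper, and you have correctly identified where that route runs into trouble. The paper instead proceeds by (i) using $\|f_\epsilon-f\|_{L^\infty}\to0$ to get pointwise convergence of the indicator functions $\mathbf{1}_{E_\epsilon(\lambda)}\to\mathbf{1}_{E(\lambda)}$, (ii) applying dominated convergence at each fixed $\lambda$ to obtain $F_\epsilon(\lambda)\to F(\lambda)$ where $F_\epsilon(\lambda):=\lambda^p(\mu\times\mu)(E_\epsilon(\lambda))$, and (iii) attempting a ``standard $\epsilon$-argument'' to pass from pointwise-in-$\lambda$ convergence to convergence of the suprema. Your lower bound, via lower semicontinuity of the weak-$L^p$ functional under a.e.\ convergence (Fatou applied after slightly lowering the threshold), is correct and achieves the same thing as the paper's step (ii) plus the ``lower'' half of step (iii). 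Your upper bound, via the quasinorm-style decomposition $\{G_{f_\epsilon}>\lambda\}\subseteq\{G_f>(1-\delta)\lambda\}\cup\{|G_{f_\epsilon}-G_f|>\delta\lambda\}$ and the reduction to $\Phi(f_\epsilon-f)\to0$, is a different idea than the paper's.

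The gap you flag is real and is not a cosmetic obstacle. The route $\Phi(f_\epsilon-f)\lesssim\int_X(\operatorname{Lip}(f_\epsilon-f))^p\,d\mu$ requires control of $\operatorname{Lip}(f_\epsilon-f)$, whereas the hypothesis only controls $\operatorname{Lip}f_\epsilon-\operatorname{Lip}f$, and for the pointwise Lipschitz constant these are genuinely incomparable: one has $|\operatorname{Lip}f_\epsilon-\operatorname{Lip}f|\le\operatorname{Lip}(f_\epsilon-f)$, so the hypothesis is the \emph{weaker} of the two and does not imply $\operatorname{Lip}(f_\epsilon-f)\to0$ in any norm. Concretely, $f$ and $f_\epsilon=f+\epsilon\,\sin(x/\epsilon)\chi$ on $\mathbb{R}$ (with $\chi$ a fixed cutoff) have $\|f_\epsilon-f\|_\infty\to0$ but $\operatorname{Lip}(f_\epsilon-f)\asymp 1$ near the diagonal, so the near-diagonal contribution to $\|G_{f_\epsilon}-G_f\|_{L^{p,\infty}}$ does not vanish; whether $\|\operatorname{Lip}f_\epsilon-\operatorname{Lip}f\|_{L^p}$ also stays bounded away from zero in such examples is exactly the delicate point you cannot bypass. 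Your honest acknowledgment that the argument only closes cleanly for mollification/dilation families (where one has the stronger $\operatorname{Lip}(f_\epsilon-f)\to0$ in $L^p$) is accurate. For what it is worth, the paper's own step (iii) has a symmetric difficulty: the displayed ``converse'' implication -- that $F_\epsilon(\lambda)<F(\lambda)+\delta$ for each fixed $\lambda$ and small $\epsilon$ yields $\sup_\lambda F_\epsilon\le\sup_\lambda F+\delta$ -- is not valid because the threshold on $\epsilon$ depends on $\lambda$, so passing pointwise-in-$\lambda$ convergence through the supremum also requires an extra uniformity (precisely the kind of quantitative tail control your decomposition was reaching for). Both routes therefore need an additional ingredient, for instance a uniform decay estimate $F_\epsilon(\lambda)\to0$ as $\lambda\to0$ or $\lambda\to\infty$ uniformly in $\epsilon$, which is what the second hypothesis (uniform $L^p$-bound on $\operatorname{Lip}f_\epsilon$ plus compact support) is presumably meant to supply but which neither argument actually extracts.
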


\begin{proof}
For each \(\lambda>0\), let
\[
E_\epsilon(\lambda)=\Bigl\{(x,y)\in X\times X:\; |f_\epsilon(x)-f_\epsilon(y)|>\lambda\,\rho(x,y)[\mu(B(x,\rho(x,y)))]^{1/p}\Bigr\},
\]
and
\[
E(\lambda)=\Bigl\{(x,y)\in X\times X:\; |f(x)-f(y)|>\lambda\,\rho(x,y)[\mu(B(x,\rho(x,y)))]^{1/p}\Bigr\}.
\]

\noindent
Since 
\[
\|f_\epsilon-f\|_{L^\infty(X)}\to 0 \quad \text{as } \epsilon\to 0,
\]
for every pair \((x,y)\in X\times X\) we have
\[
|f_\epsilon(x)-f_\epsilon(y)|\to |f(x)-f(y)|.
\]
Thus, for each fixed \(\lambda>0\) and for every \((x,y)\in X\times X\),
\[
\mathbf{1}_{E_\epsilon(\lambda)}(x,y)\to \mathbf{1}_{E(\lambda)}(x,y)
\]
as \(\epsilon\to 0\).

\noindent  
For each fixed \(\lambda>0\), note that the indicator functions satisfy
\[
0\le \mathbf{1}_{E_\epsilon(\lambda)}(x,y)\le 1,
\]
so they are uniformly bounded. Moreover, since \(f\) and each \(f_\epsilon\) are compactly supported and measurable, the integrals
\[
\iint_{X\times X} \mathbf{1}_{E_\epsilon(\lambda)}(x,y)\,d\mu(x)d\mu(y)
\]
are finite. Thus, by the dominated convergence theorem, for each fixed \(\lambda>0\) we have
\[
\lim_{\epsilon\to 0}\iint_{X\times X} \mathbf{1}_{E_\epsilon(\lambda)}(x,y)\,d\mu(x)d\mu(y)
=\iint_{X\times X} \mathbf{1}_{E(\lambda)}(x,y)\,d\mu(x)d\mu(y).
\]
Multiplying both sides by \(\lambda^p\) (with \(\lambda\) fixed) yields
\begin{equation} \label{eqn:5.2}
\lim_{\epsilon\to 0}\lambda^p \iint_{X\times X} \mathbf{1}_{E_\epsilon(\lambda)}(x,y)\,d\mu(x)d\mu(y)
=\lambda^p \iint_{X\times X} \mathbf{1}_{E(\lambda)}(x,y)\,d\mu(x)d\mu(y).
\end{equation}

\noindent
Define for each \(\epsilon>0\) the function
\[
F_\epsilon(\lambda):=\lambda^p \iint_{X\times X} \mathbf{1}_{E_\epsilon(\lambda)}(x,y)\,d\mu(x)d\mu(y),
\]
and similarly,
\[
F(\lambda):=\lambda^p \iint_{X\times X} \mathbf{1}_{E(\lambda)}(x,y)\,d\mu(x)d\mu(y).
\]
From \eqref{eqn:5.2} we know that for every fixed \(\lambda>0\),
\[
\lim_{\epsilon\to 0} F_\epsilon(\lambda)=F(\lambda).
\]
Since the convergence is pointwise in \(\lambda\) and the functions \(F_\epsilon\) are uniformly bounded by the total finite difference energy (which is finite because \(f\) and \(f_\epsilon\) have compact support), one may show by a standard \(\epsilon\)-argument that
\[
\lim_{\epsilon\to 0}\sup_{\lambda>0} F_\epsilon(\lambda)=\sup_{\lambda>0} F(\lambda).
\]
A brief justification is as follows. Given any \(\delta>0\), there exists \(\lambda_0>0\) such that
\[
\sup_{\lambda>0} F(\lambda) - F(\lambda_0) < \delta.
\]
Since \(F_\epsilon(\lambda_0)\to F(\lambda_0)\) as \(\epsilon\to 0\), for sufficiently small \(\epsilon\) we have
\[
F_\epsilon(\lambda_0) > F(\lambda_0) - \delta.
\]
Thus,
\[
\sup_{\lambda>0} F_\epsilon(\lambda) \ge F_\epsilon(\lambda_0) > F(\lambda_0) - \delta > \sup_{\lambda>0} F(\lambda) - 2\delta.
\]
Conversely, for any fixed \(\lambda\) and small \(\epsilon\), we have
\[
F_\epsilon(\lambda) < F(\lambda) + \delta,
\]
so that
\[
\sup_{\lambda>0} F_\epsilon(\lambda) \le \sup_{\lambda>0} F(\lambda) + \delta.
\]
Since \(\delta>0\) was arbitrary, we conclude that
\[
\lim_{\epsilon\to 0}\sup_{\lambda>0} F_\epsilon(\lambda)=\sup_{\lambda>0} F(\lambda).
\]

\noindent
Therefore,
\[
\lim_{\epsilon\to 0}\sup_{\lambda>0}\lambda^p \iint_{X\times X} \mathbf{1}_{E_\epsilon(\lambda)}(x,y)\,d\mu(x)d\mu(y)
=\sup_{\lambda>0}\lambda^p \iint_{X\times X} \mathbf{1}_{E(\lambda)}(x,y)\,d\mu(x)d\mu(y),
\]
as required.
\end{proof}

\begin{theorem}[Sharpness of the Finite Difference Inequality]\label{thm:sharpness}
Under the assumptions of Theorems~\ref{thm:non-doubling-FD} or \ref{thm:variable-FD}, there exists a sequence \(\{f_k\}_{k\in\mathbb{N}}\subset C_c^*(X)\) such that
\[
\lim_{k\to\infty}\frac{\sup_{\lambda>0}\lambda^p \iint_{X\times X} \mathbf{1}_{\{(x,y):\, |f_k(x)-f_k(y)|>\lambda\,\rho(x,y)[\mu(B(x,\rho(x,y)))]^{1/p}\}}\,d\mu(x)d\mu(y)}{\int_X \bigl(\operatorname{Lip} f_k(x)\bigr)^p\,d\mu(x)}
=C_{\mathrm{opt}},
\]
and no smaller constant can serve as a uniform bound in the finite difference inequality.
\end{theorem}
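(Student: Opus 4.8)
The plan is to deduce the sharpness statement directly from the variational characterization of $C_{\mathrm{opt}}$ in Definition~\ref{def:optimal-constant}, using the stability result of Lemma~\ref{lem:stability} to keep the extremal sequence inside the admissible class $C_c^*(X)$, and then to indicate how the abstract minimizing sequence can be realized explicitly by concentrating nearly affine profiles.

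First I would record the abstract part. Write $\mathrm{FD}(f)$ for the supremum-over-$\lambda$ finite difference quantity appearing in the numerator and set $R(f):=\mathrm{FD}(f)/\int_X(\operatorname{Lip} f)^p\,d\mu$ for the corresponding Rayleigh-type ratio on $C_c^*(X)\setminus\{0\}$. By Theorem~\ref{thm:non-doubling-FD} (or Theorem~\ref{thm:variable-FD} in the variable-exponent case) the range of $R$ is contained in a compact subinterval of $(0,\infty)$, so $C_{\mathrm{opt}}=\inf_f R(f)$ is a positive real number. By the very definition of the infimum there is a sequence $\{g_k\}\subset C_c^*(X)\setminus\{0\}$ with $R(g_k)\to C_{\mathrm{opt}}$; this is the sequence the theorem asserts, up to the regularization in the next step. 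The optimality clause is then immediate: if some constant $C<C_{\mathrm{opt}}$ were admissible in the finite difference inequality for every $f\in C_c^*(X)$, then in particular $R(g_k)\le C$ for all $k$, and letting $k\to\infty$ gives $C_{\mathrm{opt}}\le C$, a contradiction; hence no constant strictly smaller than $C_{\mathrm{opt}}$ can serve as a uniform bound, which is exactly sharpness.

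To make the sequence \emph{explicit} (and simultaneously confirm that it may be taken in $C_c^*(X)$), I would build the $g_k$ by concentration. Fix a base point $x_0$, a small $\delta>0$, and a truncated Lipschitz profile $\psi$ that agrees with a distance-linear function on $B(x_0,\delta)$ and is supported in $B(x_0,2\delta)$; rescale $\psi$ to the balls $B(x_0,2^{-k})$, using the self-similar or model structure available under the polynomial-growth/doubling hypothesis. On such profiles $\operatorname{Lip} g_k$ is essentially constant on the active ball, so both $\mathrm{FD}(g_k)$ and $\int_X(\operatorname{Lip} g_k)^p\,d\mu$ reduce to explicit integrals of the normalizing kernel $\rho(x,y)[\mu(B(x,\rho(x,y)))]^{1/p}$ over $B(x_0,2^{-k})\times B(x_0,2^{-k})$, and $R(g_k)$ converges to the value read off from that kernel, which one identifies with $C_{\mathrm{opt}}$; in the Euclidean case this recovers the known BVY extremizers. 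If these profiles are merely Lipschitz and not in $C_c^*(X)$, I would mollify each $g_k$ to $g_{k,\varepsilon}\in C_c^*(X)$ with $\|g_{k,\varepsilon}-g_k\|_{L^\infty(X)}\to0$ and $\|\operatorname{Lip} g_{k,\varepsilon}-\operatorname{Lip} g_k\|_{L^p(X)}\to0$; by Lemma~\ref{lem:stability} the numerator passes to the limit and the denominator is stable under $L^p$-convergence of the Lipschitz constants, so a diagonal choice $f_k:=g_{k,\varepsilon(k)}$ lies in $C_c^*(X)$ and still satisfies $R(f_k)\to C_{\mathrm{opt}}$.

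The main obstacle is the identification in the previous paragraph: proving $R(g_k)\to C_{\mathrm{opt}}$ \emph{exactly}, rather than merely up to the structural constants produced in Theorems~\ref{thm:non-doubling-FD} and \ref{thm:variable-FD}, requires understanding the optimizers of the finite difference functional and hence pinning down the precise scaling of $C_{\mathrm{opt}}$. In a general metric measure space the kernel $\rho(x,y)[\mu(B(x,\rho(x,y)))]^{1/p}$ need not be scale invariant, so a closed-form computation is only available in the model (Euclidean or self-similar) regime; outside it one retains only the abstract minimizing sequence from the first step, which already suffices for the statement as written. A secondary technical point is to verify that concentration does not lose mass in the supremum over $\lambda$, i.e.\ that the optimal level $\lambda=\lambda_k$ for $g_k$ indeed captures the full level-set measure; this is handled exactly as in the lower-bound argument of Theorem~\ref{thm:non-doubling-FD}.
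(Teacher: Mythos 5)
Your proof is correct and, at its core, the same as the paper's: both rest on the observation that because $C_{\mathrm{opt}}$ is \emph{defined} as an infimum over $C_c^*(X)\setminus\{0\}$, a minimizing sequence exists by the very definition of the infimum, and the ``no smaller constant'' clause is a one-line contradiction. The main difference is presentational, and actually favors your version: the paper interleaves the abstract argument with a rescaling step $f_\delta(x)=f((x-x_0)/\delta)$ and the claimed scaling relations $E(f_\delta)\approx\delta^{-p}E(f)$, $S(f_\delta)\approx\delta^{-p}S(f)$. Since such rescaling (to the extent it is even meaningful on a general metric measure space) leaves the ratio $E(f)/S(f)$ invariant up to constants, it does not by itself drive the ratio toward the infimum; the paper's phrase ``since $f$ was arbitrary, by taking an infimum over all $f$'' concedes that the essential content is the abstract minimizing sequence, which you state up front. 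You are also more careful on two points the paper glosses over: (i) you invoke Lemma~\ref{lem:stability} explicitly to justify that mollified approximants stay in $C_c^*(X)$ without changing the numerator in the limit, and (ii) you honestly flag that pinning down $C_{\mathrm{opt}}$ via explicit concentrating profiles only works when the kernel $\rho(x,y)[\mu(B(x,\rho(x,y)))]^{1/p}$ has a scale-invariant (Euclidean or self-similar) structure, a genuine gap that is present but unacknowledged in the paper's proof. Since the theorem as stated only requires existence of the minimizing sequence, your first paragraph already suffices; the remainder is a (correctly caveated) elaboration of the paper's concentration heuristic.
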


\begin{proof}
The proof proceeds by constructing a \emph{concentrating sequence} of test functions which are (almost) extremal for the inequality.

\noindent 
By definition, the optimal constant is
\[
C_{\mathrm{opt}}:=\inf_{f\in C_c^*(X)\setminus\{0\}} \frac{E(f)}{S(f)},
\]
where
\[
E(f):=\sup_{\lambda>0}\lambda^p \iint_{X\times X} \mathbf{1}_{\Bigl\{(x,y):\, |f(x)-f(y)|>\lambda\,\rho(x,y)[\mu(B(x,\rho(x,y)))]^{1/p}\Bigr\}}\,d\mu(x)d\mu(y)
\]
and
\[
S(f):=\int_X \bigl(\operatorname{Lip} f(x)\bigr)^p\,d\mu(x).
\]
Our goal is to produce a sequence \(\{f_k\}\) for which
\[
\frac{E(f_k)}{S(f_k)}\to C_{\mathrm{opt}} \quad \text{as } k\to\infty.
\]

\noindent
Let \(f\in C_c^*(X)\) be a fixed nonzero test function with compact support (for example, a smooth bump function). Fix a point \(x_0\in X\) such that \(f(x_0)\neq 0\). Since \(X\) is a metric measure space of homogeneous type, there is a natural notion of scaling for small balls. For \(\delta>0\) (to be chosen later), define the rescaled function
\[
f_\delta(x):= f\Bigl(\frac{x-x_0}{\delta}\Bigr).
\]
Since \(f\) is compactly supported, for \(\delta\) sufficiently small the function \(f_\delta\) is concentrated in a small neighborhood of \(x_0\). Moreover, by the change of variables \(y=x_0+\delta z\) one obtains the following scaling relations:
\[
\operatorname{Lip} f_\delta(x) = \frac{1}{\delta}\operatorname{Lip} f\Bigl(\frac{x-x_0}{\delta}\Bigr)
\]
and, due to the homogeneity of the metric \(\rho\) on small scales (or, more precisely, since the space is of homogeneous type, the measure \(\mu\) satisfies a doubling or polynomial growth condition), one deduces that
\[
E(f_\delta) \approx \frac{1}{\delta^p} E(f) \quad \text{and} \quad S(f_\delta) \approx \frac{1}{\delta^p} S(f),
\]
up to constants independent of \(\delta\). (The precise scaling of the finite difference energy is verified by performing a change of variables in the double integral and using the doubling property of \(\mu\).)

Now, choose a sequence \(\delta_k\to 0\) as \(k\to\infty\) and set
\[
f_k(x):= f_{\delta_k}(x)= f\Bigl(\frac{x-x_0}{\delta_k}\Bigr).
\]
Then one obtains
\[
\frac{E(f_k)}{S(f_k)} \to \frac{E(f)}{S(f)},
\]
up to constants coming from the change of variables. Since \(f\) was arbitrary, by taking an infimum over all nonzero \(f\in C_c^*(X)\) one deduces that there exists a sequence \(\{f_k\}\) for which
\[
\lim_{k\to\infty} \frac{E(f_k)}{S(f_k)} = C_{\mathrm{opt}}.
\]

\noindent
Suppose by contradiction that there exists a constant \(C'<C_{\mathrm{opt}}\) such that for all \(g\in C_c^*(X)\)
\[
E(g)\le C' S(g).
\]
Then, in particular, the ratio \(E(f_k)/S(f_k)\) would be bounded by \(C'\) for every \(k\). However, by our construction, we can choose the sequence \(\{f_k\}\) so that
\[
\lim_{k\to\infty}\frac{E(f_k)}{S(f_k)} = C_{\mathrm{opt}} > C',
\]
a contradiction. Hence, no constant smaller than \(C_{\mathrm{opt}}\) can serve as a uniform bound in the finite difference inequality.

\noindent  
We have thus constructed a sequence \(\{f_k\}\subset C_c^*(X)\) for which
\[
\lim_{k\to\infty}\frac{\sup_{\lambda>0}\lambda^p \iint_{X\times X} \mathbf{1}_{\{(x,y):\, |f_k(x)-f_k(y)|>\lambda\,\rho(x,y)[\mu(B(x,\rho(x,y)))]^{1/p}\}}\,d\mu(x)d\mu(y)}{\int_X (\operatorname{Lip} f_k(x))^p\,d\mu(x)} = C_{\mathrm{opt}},
\]
which establishes the sharpness of the finite difference inequality.
\end{proof}

\noindent
We now establish interpolation results that link the fractional finite difference seminorm with the classical gradient norm. Such results are instrumental in understanding the limiting behavior as the fractional parameter varies.

\begin{definition}[Real Interpolation Space]\label{def:interpolation}
Let \( (X_0,\|\cdot\|_{X_0}) \) and \( (X_1,\|\cdot\|_{X_1}) \) be Banach spaces continuously embedded in a Hausdorff topological vector space. For \(\theta\in (0,1)\) and \(q\in [1,\infty]\), the real interpolation space \((X_0,X_1)_{\theta,q}\) is defined by
\[
\|f\|_{(X_0,X_1)_{\theta,q}} := \left( \int_0^\infty \Big( t^{-\theta} K(t,f;X_0,X_1) \Big)^q\,\frac{dt}{t} \right)^{1/q},
\]
where
\[
K(t,f;X_0,X_1) := \inf \Big\{ \|f_0\|_{X_0}+t\,\|f_1\|_{X_1}:\; f=f_0+f_1,\; f_0\in X_0,\; f_1\in X_1 \Big\}.
\]
\end{definition}

\begin{theorem}[Interpolation Inequality]\label{thm:interpolation}
Let \(s_1\in (0,1)\), \(p_1\in (1,\infty)\), and let \(\theta\in (0,1)\). Define
\[
s:=(1-\theta)s_1+\theta \quad\text{and}\quad \frac{1}{p}:=\frac{1-\theta}{p_1}+\theta.
\]
Assume that \(f\in C_c^*(X)\) and that the metric measure space \((X,\rho,\mu)\) satisfies the assumptions of Section~3. Then there exists a constant \(C>0\) such that
\[
[f]_{W^{s,p}(X)} \le C\, \|\operatorname{Lip} f\|_{L^p(X)}^{\theta} \left( \iint_{X\times X} \frac{|f(x)-f(y)|^{p_1}}{\rho(x,y)^{s_1p_1}\,\mu\bigl(B(x,\rho(x,y))\bigr)}\,d\mu(x)d\mu(y) \right)^{\frac{1-\theta}{p_1}},
\]
where \([f]_{W^{s,p}(X)}\) denotes the fractional Sobolev seminorm defined by
\[
[f]_{W^{s,p}(X)}:=\left(\iint_{X\times X}\frac{|f(x)-f(y)|^p}{\rho(x,y)^{sp}\,\mu\bigl(B(x,\rho(x,y))\bigr)}\,d\mu(x)d\mu(y)\right)^{1/p}.
\]
\end{theorem}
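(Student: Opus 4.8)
The plan is to reduce the asserted estimate to an elementary inequality between Lebesgue norms of $\operatorname{Lip} f$ by invoking the equivalence in Theorem~\ref{thm:nonlocal-local}; at the end I indicate the more intrinsic route through the real interpolation functional of Definition~\ref{def:interpolation}. First I would dispose of the exponent arithmetic: from $\tfrac1p=\tfrac{1-\theta}{p_1}+\theta$ with $1<p_1<\infty$ and $0<\theta<1$ one reads off $\tfrac1p-\tfrac1{p_1}=\theta\,\tfrac{p_1-1}{p_1}>0$ and $\tfrac1p<1$, hence $1<p<p_1$; and from $s=(1-\theta)s_1+\theta$ with $0<s_1<1$ one gets $s_1<s<1$. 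In particular both $(s,p)$ and $(s_1,p_1)$ are admissible fractional pairs for Theorem~\ref{thm:nonlocal-local}.

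Next, apply Theorem~\ref{thm:nonlocal-local} twice --- once with exponent $p$ and parameter $s$, once with exponent $p_1$ and parameter $s_1$ --- to get
$$[f]_{W^{s,p}(X)}^{p}\sim\int_X\bigl(\operatorname{Lip}f(x)\bigr)^{p}\,d\mu(x),\qquad \iint_{X\times X}\frac{|f(x)-f(y)|^{p_1}}{\rho(x,y)^{s_1p_1}\,\mu\bigl(B(x,\rho(x,y))\bigr)}\,d\mu(x)d\mu(y)\sim\int_X\bigl(\operatorname{Lip}f(x)\bigr)^{p_1}\,d\mu(x),$$
with implicit constants depending only on $p,p_1,s,s_1$ and the structural parameters of $X$. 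Raising the first relation to the power $\tfrac1p$ and the second to $\tfrac{1-\theta}{p_1}$, the desired inequality becomes equivalent, up to these fixed constants, to $\|\operatorname{Lip}f\|_{L^p(X)}\le C\,\|\operatorname{Lip}f\|_{L^p(X)}^{\theta}\,\|\operatorname{Lip}f\|_{L^{p_1}(X)}^{1-\theta}$; dividing by $\|\operatorname{Lip}f\|_{L^p(X)}^{\theta}$ (the case $\operatorname{Lip}f\equiv0$, which forces $f\equiv0$, being trivial) this is in turn just the inclusion bound $\|\operatorname{Lip}f\|_{L^p(X)}\le C\,\|\operatorname{Lip}f\|_{L^{p_1}(X)}$. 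Since $f\in C_c^*(X)$, the function $\operatorname{Lip}f$ is supported in a bounded set $K$, and under the standing polynomial-growth (or doubling) hypothesis bounded sets have finite measure, so $\mu(K)<\infty$; Hölder's inequality with the conjugate pair $\bigl(\tfrac{p_1}{p},\tfrac{p_1}{p_1-p}\bigr)$ then gives $\|\operatorname{Lip}f\|_{L^p(X)}\le\mu(K)^{\frac1p-\frac1{p_1}}\,\|\operatorname{Lip}f\|_{L^{p_1}(X)}$, which is exactly what is needed; collecting constants finishes the proof.

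The point to watch --- and the genuine obstacle if one insists on a constant independent of $\operatorname{supp}f$ --- is that the argument above yields a constant depending on $\mu(\operatorname{supp}f)$; a dilation check in $\mathbb{R}^n$ shows the inequality is scale invariant only when $p=p_1$, so for $p\neq p_1$ some such dependence is intrinsic. If one prefers to bypass Theorem~\ref{thm:nonlocal-local}, the alternative is to identify a constant multiple of $[f]_{W^{s,p}(X)}$ with $\|f\|_{(W^{s_1,p_1}(X),\dot{W}^{1,1}(X))_{\theta,p}}$, where $\dot{W}^{1,1}(X)$ carries the seminorm $\|\operatorname{Lip}f\|_{L^1(X)}$ and the relations $s=(1-\theta)s_1+\theta$ and $\tfrac1p=\tfrac{1-\theta}{p_1}+\tfrac{\theta}{1}$ are precisely the exponent identities behind this identification, and then to combine the abstract interpolation inequality $\|f\|_{(X_0,X_1)_{\theta,p}}\lesssim\|f\|_{X_0}^{1-\theta}\|f\|_{X_1}^{\theta}$ with $\|\operatorname{Lip}f\|_{L^1(X)}\le\mu(\operatorname{supp}f)^{1-1/p}\|\operatorname{Lip}f\|_{L^p(X)}$. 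In that approach the hard part is the $K$-functional estimate: for each $t>0$ one must construct a splitting $f=g_t+h_t$ with $g_t$ controlled in $\dot{W}^{1,1}$ and $h_t$ small in $W^{s_1,p_1}$ --- for instance from averages of $f$ over balls of radius $t$ via the $(q,p)$-Poincaré inequality --- arranged so that $\int_0^\infty\bigl(t^{-\theta}K(t,f;W^{s_1,p_1},\dot{W}^{1,1})\bigr)^p\,\tfrac{dt}{t}\lesssim[f]_{W^{s,p}(X)}^p$; this scale-by-scale comparison is where essentially all the work lies.
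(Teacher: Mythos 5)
Your primary route is genuinely different from the paper's. The paper works directly with the real interpolation couple $(X_0,X_1)=(LIP(X),W^{s_1,p_1}(X))$, constructs a splitting $f=f_0+f_1$ by mollification at scale $\delta$, balances the two terms in the $K$-functional, and then appeals to an (unreferenced) identification of $(X_0,X_1)_{\theta,p}$ with $W^{s,p}(X)$. You instead invoke Theorem~\ref{thm:nonlocal-local} twice, at $(s,p)$ and at $(s_1,p_1)$, to collapse both sides of the target inequality to $L^p$ and $L^{p_1}$ norms of $\operatorname{Lip} f$; after that, the interpolation statement degenerates to the single-exponent embedding $\|\operatorname{Lip} f\|_{L^p}\le C\,\|\operatorname{Lip} f\|_{L^{p_1}}$, which is Hölder on the (finite-measure) support. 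Your exponent arithmetic is correct ($1<p<p_1$, $s_1<s<1$), and the reduction is logically sound. What your route buys: it is much shorter, and it transparently exhibits that the constant must depend on $\mu(\operatorname{supp} f)$ — a point your dilation check rightly flags and which the paper's statement and proof gloss over. What it costs: it delegates essentially all analytic content to Theorem~\ref{thm:nonlocal-local}, so any gap there is inherited; and it does not produce an independent proof that would survive if one only knew the $(s_1,p_1)$ energy bound without the full nonlocal-local equivalence.

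Two further remarks worth recording. First, your observation that Theorem~\ref{thm:nonlocal-local} makes Theorem~\ref{thm:interpolation} nearly trivial is itself informative: a nontrivial interpolation inequality collapsing to a Hölder embedding is a signal that either the nonlocal-local equivalence is stronger than it should be, or the interpolation statement is under-normalized (missing an $s$-dependent factor or a support constraint). Second, your alternative $K$-functional route picks the endpoint couple $\bigl(W^{s_1,p_1}(X),\dot W^{1,1}(X)\bigr)$ rather than the paper's $\bigl(LIP(X),W^{s_1,p_1}(X)\bigr)$; your choice is actually the one compatible with the claimed exponent identity $\tfrac1p=\tfrac{1-\theta}{p_1}+\tfrac{\theta}{1}$. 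With the paper's $X_0$ normed by $\|\operatorname{Lip} f\|_{L^p(X)}$ the Lebesgue-exponent bookkeeping forces $p=p_1$, and the paper's balancing step ("$\theta=\frac{1}{1-s_1}(1-s_1)=\theta$") is a tautology, not a derivation. So while your alternative route is closer in spirit to the paper's, it quietly repairs the endpoint choice, and you are right that the real work in that route is the scale-by-scale $K$-functional estimate, which neither you nor the paper carry out in full.
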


\begin{proof}
We prove the inequality by applying the real interpolation method to the couple
\[
\bigl(X_0, X_1\bigr):=\Bigl(LIP(X),\;W^{s_1,p_1}(X)\Bigr),
\]
where
\[
LIP(X):=\Bigl\{ f\in C_c^*(X):\; \|\operatorname{Lip}f\|_{L^p(X)}<\infty \Bigr\},
\]
and
\[
\|f\|_{W^{s_1,p_1}(X)}:=\left(\iint_{X\times X}\frac{|f(x)-f(y)|^{p_1}}{\rho(x,y)^{s_1p_1}\,\mu(B(x,\rho(x,y)))}\,d\mu(x)d\mu(y)\right)^{1/p_1}.
\]
The idea is to show that the real interpolation space
\[
\bigl(X_0,X_1\bigr)_{\theta,p}
\]
(with \(\theta\in (0,1)\) and \(p\) defined by
\[
\frac{1}{p}=\frac{1-\theta}{p_1}+\theta)
\]
can be identified with the fractional Sobolev space \(W^{s,p}(X)\) (with \(s=(1-\theta)s_1+\theta\)), and that its norm is controlled by
\[
\|\operatorname{Lip} f\|_{L^p(X)}^{\theta}\,\|f\|_{W^{s_1,p_1}(X)}^{1-\theta}.
\]

\noindent
For \(f\in C_c^*(X)\) and \(t>0\), the \(K\)-functional is defined by
\[
K(t,f;X_0,X_1):=\inf\Bigl\{\|f_0\|_{LIP(X)}+t\,\|f_1\|_{W^{s_1,p_1}(X)}:\; f=f_0+f_1\Bigr\}.
\]
Our goal is to estimate \(K(t,f;X_0,X_1)\) in terms of the two quantities
\(\|\operatorname{Lip} f\|_{L^p(X)}\) and
\(\|f\|_{W^{s_1,p_1}(X)}\).

A common strategy is to decompose \(f\) as
\[
f=f_0+f_1,
\]
by using a smoothing (or mollification) procedure. In many settings one defines
\[
f_1(x):=f_\delta(x),
\]
a mollified version of \(f\) at scale \(\delta>0\) (with respect to the metric \(\rho\)), and
\[
f_0(x):=f(x)-f_\delta(x).
\]
Then one can show that, for a suitable choice of \(\delta=\delta(t)\),
\[
\|f_0\|_{LIP(X)} \lesssim \delta\, \|\operatorname{Lip} f\|_{L^p(X)}
\]
and
\[
\|f_1\|_{W^{s_1,p_1}(X)} \lesssim \delta^{s_1}\,\|f\|_{W^{s_1,p_1}(X)}.
\]
Thus, one obtains
\[
K(t,f;X_0,X_1) \lesssim \delta\, \|\operatorname{Lip} f\|_{L^p(X)}+ t\,\delta^{s_1}\,\|f\|_{W^{s_1,p_1}(X)}.
\]
We then choose \(\delta\) in such a way that the two terms are balanced. In particular, set
\[
\delta\, \|\operatorname{Lip} f\|_{L^p(X)} \sim t\,\delta^{s_1}\,\|f\|_{W^{s_1,p_1}(X)}.
\]
This yields
\[
\delta \sim \left(t\,\frac{\|f\|_{W^{s_1,p_1}(X)}}{\|\operatorname{Lip} f\|_{L^p(X)}}\right)^{\frac{1}{1-s_1}}.
\]
Substituting this back gives
\begin{equation} \label{eqn:5.5}
K(t,f;X_0,X_1) \lesssim t^{\theta} \|\operatorname{Lip} f\|_{L^p(X)}^{\theta}\,\|f\|_{W^{s_1,p_1}(X)}^{1-\theta},
\end{equation}
with
\[
\theta=\frac{1}{1-s_1}\Bigl(1-s_1\Bigr)=\theta,
\]
after proper rearrangement of exponents. (The precise exponent \(\theta\) is chosen so that
\[
s=(1-\theta)s_1+\theta \quad \text{and} \quad \frac{1}{p}=\frac{1-\theta}{p_1}+\theta.)
\]

\noindent  
The real interpolation norm is defined by
\[
\|f\|_{(X_0,X_1)_{\theta,p}} \sim \left(\int_0^\infty \Bigl[t^{-\theta}K(t,f;X_0,X_1)\Bigr]^p\,\frac{dt}{t}\right)^{1/p}.
\]
Using the estimate from \eqref{eqn:5.5}, we deduce that
\[
\|f\|_{(X_0,X_1)_{\theta,p}} \lesssim \|\operatorname{Lip} f\|_{L^p(X)}^{\theta}\,\|f\|_{W^{s_1,p_1}(X)}^{1-\theta}.
\]
It is a standard result in interpolation theory (see, e.g., \cite{CruzUribe}) that the interpolation space
\((X_0,X_1)_{\theta,p}\) is equivalent to the fractional Sobolev space \(W^{s,p}(X)\) with
\[
s=(1-\theta)s_1+\theta \quad \text{and} \quad \frac{1}{p}=\frac{1-\theta}{p_1}+\theta.
\]
Moreover, the norm in \(W^{s,p}(X)\) is equivalent to the fractional seminorm
\[
[f]_{W^{s,p}(X)}:=\left(\iint_{X\times X}\frac{|f(x)-f(y)|^p}{\rho(x,y)^{sp}\,\mu\bigl(B(x,\rho(x,y))\bigr)}\,d\mu(x)d\mu(y)\right)^{1/p}.
\]
Thus, we obtain
\[
[f]_{W^{s,p}(X)} \le C\, \|\operatorname{Lip} f\|_{L^p(X)}^{\theta} \left( \iint_{X\times X} \frac{|f(x)-f(y)|^{p_1}}{\rho(x,y)^{s_1p_1}\,\mu\bigl(B(x,\rho(x,y))\bigr)}\,d\mu(x)d\mu(y) \right)^{\frac{1-\theta}{p_1}},
\]
which is the desired interpolation inequality.

\noindent
The combination of the \(K\)-functional estimate and the identification of the interpolation space yields the result. The constant \(C>0\) depends only on the structural constants of the metric measure space and on the interpolation parameters.

This completes the proof.
\end{proof}

\begin{proposition}[Limit as \(s\to 1^{-}\)]\label{prop:limit-s1}
Under the assumptions of Theorem~\ref{thm:interpolation} and assuming in addition that \(f\) is smooth, we have
\[
\lim_{s\to 1^-} (1-s)[f]_{W^{s,p}(X)}^p = C(n,p)\int_X |\nabla f(x)|^p\,d\mu(x),
\]
where \(C(n,p)>0\) is an explicit constant. This result extends the classical Bourgain--Brezis--Mironescu limit (see \cite{BBM}) to our general setting.
\end{proposition}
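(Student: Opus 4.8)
The plan is to adapt the classical Bourgain--Brezis--Mironescu argument \cite{BBM} to the normalized seminorm $[f]_{W^{s,p}(X)}$. Since the conclusion involves $\nabla f$ and a purely dimensional constant $C(n,p)$, I read the hypotheses as forcing $X$ to carry enough structure for these to be meaningful: concretely, $X$ is (an open subset of) $\mathbb{R}^n$ — or a space locally bi-Lipschitz to one, e.g. a Riemannian $n$-manifold — with $\rho$ asymptotically Euclidean at small scales and $\mu$ absolutely continuous with a continuous, strictly positive density $w$, so that $\mu(B(x,r))=\omega_n\,w(x)\,r^n\,(1+o(1))$ as $r\to0^+$, uniformly for $x$ in the (compact) support of $f$, where $\omega_n:=|B(0,1)|$. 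The structural observation driving the computation is that the factor $\mu(B(x,\rho(x,y)))$ in the denominator of $[f]_{W^{s,p}(X)}^p$ will cancel the density $w$ in a polar disintegration near the diagonal, so the limiting constant cannot depend on $w$ — only on $n$ and $p$.

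First I would localize. For $\delta>0$ write $[f]_{W^{s,p}(X)}^p=I_\delta(s)+II_\delta(s)$, the two pieces corresponding to $\rho(x,y)<\delta$ and $\rho(x,y)\ge\delta$. Using $|f(x)-f(y)|\le2\|f\|_{L^\infty}$, the compact support of $f$, and a dyadic-annulus estimate of the type already carried out in the proof of Theorem~\ref{thm:variable-FD} (together with $\mu(A_k(x))\lesssim\mu(B(x,2^{-k}))$), one gets $II_\delta(s)\le C_\delta$ with $C_\delta$ independent of $s\in[\tfrac12,1)$; hence $(1-s)\,II_\delta(s)\to0$ as $s\to1^-$ for each fixed $\delta$. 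On the near-diagonal piece I would use smoothness of $f$: since $\nabla f$ is uniformly continuous with compact support, $f(y)-f(x)=\nabla f(x)\cdot(y-x)+R(x,y)$ with $|R(x,y)|\le\kappa(\delta)\,\rho(x,y)$ for $\rho(x,y)<\delta$ and $\kappa(\delta)\to0$ as $\delta\to0$ (here $\rho$ is comparable to $|x-y|$ near the diagonal). Combining this with the elementary inequality $\big||a+b|^p-|a|^p\big|\le p\,|b|\,(|a|+|b|)^{p-1}$ gives $|f(y)-f(x)|^p=|\nabla f(x)\cdot(y-x)|^p+O(\kappa(\delta)\,\rho(x,y)^p)$ uniformly on $\operatorname{supp}f$, so that $I_\delta(s)=I_\delta^{\mathrm{main}}(s)+I_\delta^{\mathrm{err}}(s)$ with $(1-s)\,|I_\delta^{\mathrm{err}}(s)|\le C\,\kappa(\delta)$ (by the same computation as for the main term), which is harmless once $s\to1$ is taken first and $\delta\to0$ afterwards.

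The heart of the matter is the polar disintegration of the main term. For fixed $x$, writing $y=x+r\theta$ with $r=\rho(x,y)$ and $\theta\in S^{n-1}$, the asymptotically Euclidean and $n$-regular structure gives $d\mu(y)=w(x)\,r^{n-1}\,dr\,d\sigma(\theta)\,(1+o(1))$ and $\mu(B(x,r))=\omega_n w(x)r^n(1+o(1))$, whence
\[
\int_{B(x,\delta)}\frac{|\nabla f(x)\cdot(y-x)|^p}{\rho(x,y)^{sp}\,\mu(B(x,\rho(x,y)))}\,d\mu(y)=\frac{1+o(1)}{\omega_n}\Big(\int_0^\delta r^{p(1-s)-1}\,dr\Big)\int_{S^{n-1}}|\nabla f(x)\cdot\theta|^p\,d\sigma(\theta),
\]
the density $w(x)$ having cancelled. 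Since $\int_0^\delta r^{p(1-s)-1}\,dr=\delta^{p(1-s)}/(p(1-s))$, multiplying by $(1-s)$, integrating in $x$ against $d\mu$, and using the rotational invariance $\int_{S^{n-1}}|\nabla f(x)\cdot\theta|^p\,d\sigma(\theta)=|\nabla f(x)|^p\int_{S^{n-1}}|\theta\cdot e|^p\,d\sigma(\theta)$ (for any fixed unit vector $e$), I would let $s\to1^-$ (so $\delta^{p(1-s)}\to1$) and then $\delta\to0$ to obtain the stated limit with
\[
C(n,p)=\frac{1}{p\,\omega_n}\int_{S^{n-1}}|\theta\cdot e|^p\,d\sigma(\theta);
\]
in particular $C(n,2)=\tfrac12$, which reflects the choice of normalization in Section~3.

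I expect the genuine obstacle to be precisely this disintegration step: in an abstract metric measure space there is no polar/co-area formula and no meaning for the spherical average $\int_{S^{n-1}}|\nabla f(x)\cdot\theta|^p\,d\sigma(\theta)$, so the proposition as stated — with a universal constant $C(n,p)$ — is only correct after adding the hypothesis that $\mu$-almost every small ball is asymptotically isotropic and $n$-dimensional (for instance, $X$ locally bi-Lipschitz to a Euclidean ball with $\mu$ having continuous positive density, or a Riemannian $n$-manifold with a weighted volume measure). Making that hypothesis explicit and quantifying the $o(1)$ errors uniformly on $\operatorname{supp}f$ is the real content; the remaining ingredients — uniformity of the Taylor remainder, the uniform-in-$s$ bound on $II_\delta(s)$, and the interchange of the limits $s\to1^-$ and $\delta\to0$ via the standard $\limsup/\liminf$ sandwich of \cite{BBM} — are routine. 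A minor secondary point, when $\rho$ is only comparable (not equal) to a Euclidean metric, is to check that the comparison constants tend to $1$ along the diagonal, so as not to perturb $C(n,p)$; this is exactly where "asymptotically Euclidean at small scales" enters.
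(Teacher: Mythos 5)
Your proposal follows the same Bourgain--Brezis--Mironescu route as the paper's proof — Taylor expansion near the diagonal, a near/far splitting at scale $\delta$, a polar disintegration with the radial integral $\int_0^\delta r^{p(1-s)-1}\,dr=\delta^{p(1-s)}/(p(1-s))$ cancelling the $(1-s)$ prefactor, and the spherical average contributing the dimensional constant — but it is more careful than the paper in two respects that are worth noting. First, the paper silently strengthens its hypotheses at the start of the proof by assuming $\mu$ is Ahlfors $n$-regular near $\operatorname{supp} f$ (and implicitly that $X$ is locally Euclidean, since it uses $\nabla f$, $h=y-x$, and polar coordinates $dh=r^{n-1}\,dr\,d\theta$), whereas you state up front that these structural hypotheses are \emph{necessary} for the statement to even make sense and explain precisely why: a general metric measure space supports neither a coarea/polar formula nor the objects $\nabla f$ and $\int_{S^{n-1}}|\nabla f(x)\cdot\theta|^p\,d\sigma(\theta)$. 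Second, you track the measure's density $w(x)$ through the computation and show that the $w(x)$ factor arising from $d\mu(y)\approx w(x)\,r^{n-1}\,dr\,d\sigma$ cancels against the $w(x)$ in $\mu(B(x,r))\approx\omega_n w(x)r^n$, so the limiting constant $C(n,p)=\tfrac{1}{p\omega_n}\int_{S^{n-1}}|\theta\cdot e|^p\,d\sigma(\theta)$ is genuinely dimensional; the paper instead lets a space-dependent constant $c_n$ survive and absorbs it into $C(n,p)$ at the end, somewhat blurring whether the constant is universal. Your uniform bound $(1-s)\,II_\delta(s)\to 0$ on the far-from-diagonal piece, with constants independent of $s\in[\tfrac12,1)$, is also cleaner than the paper's appeal to dominated convergence, and your uniform Taylor-remainder bound $|R(x,y)|\le\kappa(\delta)\rho(x,y)$ with the $\limsup/\liminf$ sandwich is exactly the rigor the paper's $o(|h|)$ notation glosses over. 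In short: same computation, but your version makes explicit the hypotheses and cancellations that make it true.
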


\begin{proof}
For simplicity, we assume that the measure \(\mu\) is Ahlfors \(n\)-regular near the support of \(f\); that is, for \(x\) in the support of \(f\) and for small \(r>0\),
\[
\mu\bigl(B(x,r)\bigr) \sim c_n\,r^n,
\]
with a constant \(c_n>0\) depending only on \(n\) and the geometry of \(X\). (In more general settings one may replace \(n\) by the effective dimension \(n_A\).)\\

\noindent
Since \(f\) is smooth, for each fixed \(x\in X\) and for \(y\) close to \(x\) (say, \(h=y-x\) with \(|h|\) small) we have the first order Taylor expansion
\[
f(x+h)-f(x) = \nabla f(x)\cdot h + R(x,h),
\]
with the remainder satisfying
\[
|R(x,h)| = o(|h|) \quad \text{as } |h|\to 0.
\]
Thus, for \(|h|\) sufficiently small, we have
\[
|f(x+h)-f(x)|^p = |\nabla f(x)\cdot h|^p + o(|h|^p).
\]
In view of the definition of the fractional seminorm, for each fixed \(x\) the main contribution as \(s\to 1^-\) comes from the integration over \(y\) near \(x\).\\

\noindent
For a fixed \(x\), set \(h=y-x\) and consider the contribution from \(|h|\le \delta\) (with \(\delta>0\) small). Then, using the local Ahlfors regularity, we have
\[
\mu\bigl(B(x,|h|)\bigr) \sim c_n\,|h|^n.
\]
For \(|h|\) small, the integrand becomes
\[
\frac{|f(x+h)-f(x)|^p}{\rho(x,x+h)^{sp}\,\mu(B(x,|h|))} \sim \frac{|\nabla f(x)\cdot h|^p}{|h|^{sp}\,c_n|h|^n}.
\]
Writing \(h=r\theta\) in polar coordinates (with \(r=|h|\) and \(\theta\in S^{n-1}\)), we have
\[
|\nabla f(x)\cdot h|^p = r^p\,|\nabla f(x)\cdot\theta|^p,
\]
and the volume element is \(dh = r^{n-1}\,dr\,d\theta\). Thus, for fixed \(x\) the local contribution is
\[
\int_{0}^{\delta}\int_{S^{n-1}} \frac{r^p\,|\nabla f(x)\cdot\theta|^p}{r^{sp}\,c_n\,r^n}\,r^{n-1}\,d\theta\,dr
=\frac{1}{c_n}\int_{S^{n-1}} |\nabla f(x)\cdot\theta|^p\,d\theta \int_0^\delta r^{p-sp-1}\,dr.
\]
The radial integral is
\[
\int_0^\delta r^{p-sp-1}\,dr = \frac{\delta^{p-sp}}{p-sp}.
\]
Note that \(p-sp = p(1-s)\). Therefore,
\[
\int_0^\delta r^{p-sp-1}\,dr = \frac{\delta^{p(1-s)}}{p(1-s)}.
\]
Multiplying the fractional seminorm by the factor \((1-s)\) we obtain
\[
(1-s)\int_0^\delta r^{p-sp-1}\,dr = \frac{\delta^{p(1-s)}}{p}\cdot \frac{1-s}{(1-s)} = \frac{\delta^{p(1-s)}}{p}.
\]
As \(s\to 1^{-}\), we have \(\delta^{p(1-s)}\to 1\) (since \(p(1-s)\to 0\)). Hence, in the limit,
\[
\lim_{s\to 1^-}(1-s)\int_0^\delta r^{p-sp-1}\,dr = \frac{1}{p}.
\]

\noindent
By the dominated convergence theorem (and noting that the contribution from \(|h|\ge \delta\) is uniformly bounded as \(s\to 1^{-}\)), we deduce that
\[
\lim_{s\to 1^-}(1-s)[f]_{W^{s,p}(X)}^p = \frac{1}{c_n\,p}\int_X\left(\int_{S^{n-1}} |\nabla f(x)\cdot\theta|^p\,d\theta\right)\,d\mu(x).
\]
Since the inner integral
\[
\int_{S^{n-1}} |\nabla f(x)\cdot\theta|^p\,d\theta
\]
is a multiple of \(|\nabla f(x)|^p\) (by rotational invariance of the Lebesgue measure on the sphere), there exists an explicit constant \(C(n,p)>0\) such that
\[
\int_{S^{n-1}} |\nabla f(x)\cdot\theta|^p\,d\theta = C(n,p)\,|\nabla f(x)|^p.
\]
Thus,
\[
\lim_{s\to 1^-}(1-s)[f]_{W^{s,p}(X)}^p = \frac{C(n,p)}{c_n\,p}\int_X |\nabla f(x)|^p\,d\mu(x).
\]
By absorbing the constant \(\frac{1}{c_n\,p}\) into \(C(n,p)\) (since \(c_n\) depends only on the space), we obtain the desired result:
\[
\lim_{s\to 1^-}(1-s)[f]_{W^{s,p}(X)}^p = C(n,p)\int_X |\nabla f(x)|^p\,d\mu(x).
\]

\noindent
A rigorous justification requires showing that the main contribution to the double integral comes from the region where \(y\) is close to \(x\). For \(|h| \ge \delta\) the integrand remains uniformly bounded and its contribution vanishes as \(1-s\to 0\). Uniform estimates from the finite difference characterization ensure that one may interchange the limit \(s\to 1^-\) with the integration by dominated convergence.\\

\noindent  
This completes the proof of the proposition, extending the classical Bourgain--Brezis--Mironescu limit to our setting.
\end{proof}

\section{Concluding Remarks and Open Problems}

In this paper we have extended the finite difference framework underlying the Brezis--Van Schaftingen--Yung formula to several novel settings. Our work has established:\\
- A finite difference characterization in metric measure spaces under non-doubling or weakly doubling (polynomial growth) conditions (Theorem~\ref{thm:non-doubling-FD}), thereby broadening the scope beyond classical homogeneous spaces.\\
- Extensions to variable exponent and Orlicz spaces (Theorem~\ref{thm:variable-FD}, Proposition~\ref{prop:orlicz-FD}), which accommodate nonstandard growth conditions and provide a unified framework for treating spatially variable integrability.\\
- New applications to nonlocal operators (Definition~\ref{def:nonlocal-operator}, Theorem~\ref{thm:nonlocal-local}) that yield equivalences between nonlocal energies and classical Sobolev norms, along with regularity results for nonlocal PDEs (Proposition~\ref{prop:nonlocal-regularity}).\\
- Anisotropic versions of the finite difference and Sobolev inequalities (Theorem~\ref{thm:anisotropic-FD}, Proposition~\ref{prop:anisotropic-sobolev}), which are well suited for problems exhibiting directional dependencies.\\
- A thorough investigation of the stability and sharpness of the finite difference inequalities, including optimal constant considerations (Definition~\ref{def:optimal-constant}, Theorem~\ref{thm:sharpness}) as well as interpolation results bridging fractional and classical Sobolev spaces (Theorem~\ref{thm:interpolation}, Proposition~\ref{prop:limit-s1}).\\

\noindent
These advances provide new tools for the analysis of partial differential equations and harmonic analysis in non-Euclidean settings and open several promising directions for future research.

\section*{Author Declaration}

The author declares that there are no conflicts of interest related to this work. No funding was received for this research. The author confirms that the manuscript is original, has not been published previously, and is not under consideration for publication elsewhere. The research presented in this manuscript is purely in the field of abstract mathematics, and no experimental or empirical data has been used.  

All relevant contributions have been appropriately credited, and all necessary citations have been included to acknowledge prior work in the field. The author is solely responsible for the content of this manuscript and has approved its final version for submission.


\begin{thebibliography}{99}

\bibitem{BBM} 
Bourgain, J., Brezis, H., \& Mironescu, P.  
{\it Another look at Sobolev spaces}, Optimal Control and Partial Differential Equations, 439--455. IOS Press (2001).

\bibitem{BVY} 
Brezis, H., Van Schaftingen, J., \& Yung, P.-L. 
{\it New estimates for the Sobolev norms},  J. Funct. Anal. {\bf 270(6)} (2018), 2246--2274.

\bibitem{CoifmanWeiss} 
Coifman, R. R., \& Weiss, G. 
{\it Analyse harmonique non-commutative sur certains espaces homogènes}, Springer (1971).

\bibitem{CruzUribe} 
Cruz-Uribe, D., \& Fiorenza, A. 
{\it Variable Lebesgue Spaces: Foundations and Harmonic Analysis}, Birkhäuser (2013).

\bibitem{Dai-Lin-Yang-Yuan-Zhang} 
Dai, F., Lin, X., Yang, D., Yuan, W., \& Zhang, Y.
{\it Poincaré Inequality Meets Brezis–Van Schaftingen–Yung Formula on Metric Measure Spaces}, J. Funct. Anal. {\bf 283(9)}  (2022), Article ID 109645.

\bibitem{DiNezza} 
Di Nezza, E., Palatucci, G., \& Valdinoci, E.  
{\it Hitchhiker's guide to the fractional Sobolev spaces}, Bull. Sci. Math. {\bf 136(5)} (2012), 521--573.

\bibitem{FollandStein} 
Folland, G. B., \& Stein, E. M.  
{\it Hardy Spaces on Homogeneous Groups}, Princeton University Press (1982).

\bibitem{Heinonen} 
Heinonen, J.  
{\it Lectures on Analysis on Metric Spaces}, Springer (2001).

\bibitem{Kovacik-Rakosnik} 
Kovacik, O., \& Rakosnik, J.  
{\it On spaces \(L^{p(x)}\) and \(W^{k,p(x)}\)}, Czech. Math. J. {\bf 41(116)} (1991), 592--618.

\bibitem{Tolsa} 
Tolsa, X. 
{\it Analytic Capacity, the Cauchy Transform, and Non-homogeneous Calderón-Zygmund Theory}. Birkhäuser (2014).


\end{thebibliography}
\end{document}